\documentclass[10pt,a4paper]{amsart}
\usepackage{amsmath,amsthm,amssymb}
\usepackage[applemac]{inputenc}
\usepackage{graphicx}

\usepackage{amsfonts}
\usepackage{epsf}
\theoremstyle{plain}
\newtheorem{teo}{Theorem}[section]
\newtheorem{prop}[teo]{Proposition}
\newtheorem{coro}[teo]{Corollary}
\newtheorem{lm}[teo]{Lemma}

\theoremstyle{definition}
\newtheorem{exa}[teo]{Example} 
\newtheorem{oss}[teo]{Remark}

\newtheorem*{ack}{Acknowledgements}
\numberwithin{equation}{section}

\DeclareMathOperator*{\osc}{osc}
\def\ep{\varepsilon}
\def\SSN{{\mathbb S}^{N-1}}

\def\RN{\mathbb{R}^N}
\def\RR{\mathbb{R}}

\def\cA{\mathcal{A}}
\def\cC{\mathcal{C}}

\def\cS{\mathcal{S}}
\def\al{\alpha}
\def\la{\lambda}
\def\cP{\mathcal P}

\def\pa{\partial}

\def\shk{\cS_\om(\cK)}

\def\dist{\mbox{\rm dist}}

\def\diam{\mbox{\rm diam}}


\newcommand{\cK}{\mathcal K}
\newcommand{\wh}{\widehat}
\newcommand{\De}{\Delta}

\newcommand{\cT}{\mathcal T}

\newcommand{\cR}{\mathcal R}
\newcommand{\cM}{\mathcal M}
\newcommand{\sums}{\sum\limits}

\newcommand{\core}{\heartsuit}

\newcommand{\si}{\sigma}
\newcommand{\Om}{\Omega}
\newcommand{\om}{\omega}
\newcommand{\acc}{\`}

\newcommand{\RE}{\mathbb R}
\newcommand{\na}{\nabla}
\newcommand{\ovr}{\overline}

\newcommand{\ds}{\displaystyle}
\newcommand{\omp}{\om^\perp}
\newcommand{\one}{\mathcal{X}}

\title[The location of the hot spot]{The location of the hot spot in a grounded convex conductor}
\author{Lorenzo Brasco}
\address{Dipartimento di Matematica e Applicazioni ``R. Caccioppoli'', Universit\`a di Napoli ``Federico II'', Complesso di Monte S. Angelo, Via Cintia, 80126 Napoli, Italy}
\email{lorenzo.brasco@unina.it}
\author{Rolando Magnanini}
\author{Paolo Salani}
\address{Dipartimento di Matematica ``U. Dini'', Universit\acc a di Firenze, viale Morgagni 67/A, 50134 Firenze, Italy}
\email{magnanin@math.unifi.it, paolo.salani@math.unifi.it}
\keywords{Heat equation, hot spot, eigenfunctions, Santal\`o point}
\subjclass[2010]{35K05; 35B38; 35B50}

\begin{document}

\maketitle
\begin{abstract}We investigate the location of 
the (unique) hot spot in a convex heat conductor with unitary initial
temperature and with boundary grounded at zero temperature. We
present two methods to locate the hot spot: the former is based
on ideas related to the Alexandrov-Bakelmann-Pucci maximum principle and  
Monge-Amp\` ere equations; the latter relies on 
Alexandrov's reflection principle.
We then show how such a problem can be simplified in case the conductor 
is a polyhedron. Finally, we present some numerical computations.
\end{abstract}

\section{Introduction}

Consider a heat conductor $\Om$ having (positive) constant initial temperature while its boundary is
constantly kept at zero temperature. This physical situation can be described
by the following initial-boundary value problem for heat equation:
\begin{equation}
\label{heatconductor}
\begin{array}{ll}
u_t=\Delta u\quad\mbox{ in }\ &\Om\times (0,\infty),\\
u=1 \quad\quad\mbox{ on } &\Om\times\{ 0\},\\
u=0 \quad\quad\mbox{ on } &\pa\Om\times
(0,\infty).
\end{array}
\end{equation}
Here $\Om$ --- the {\it heat conductor} --- is a bounded domain
in the Euclidean space $\RE^N,$ $N\ge 2,$ with Lipschitz boundary and $u=u(x,t)$ denotes the normalized temperature
of the conductor at a point $x\in\Om$ and time $t>0.$
\par
A {\it hot spot} $x(t)$ is a point at which the temperature $u$ attains 
its maximum at each given time $t,$ that is such that
$$
u(x(t),t)=\max_{y\in\ovr{\Om}} u(y,t).
$$
If $\Om$ is  convex (in this case $\ovr{\Om}$ is said a {\it convex body}), it is well-known by a result of Brascamp and Lieb \cite{BL} that
$\log u(x,t)$ is concave in $x$ for every $t>0$ and this, together with the analyticity of $u$ in $x,$
implies that for every $t>0$ there is a unique point $x(t)\in\Om$ at which the gradient
$\na u$ of $u$ vanishes (see also \cite{MS1}).
\par
The aim of this paper is to give quantitative information on the hot spot's location in a convex body.
\par
A description of the evolution with time of the hot spot can be found in 
\cite{Sa}; we summarize it here for the reader's convenience. 
A classical result of Varadhan's \cite{Va} tells us where $x(t)$ is located for small times:
since
$$
-4t\,\log\{ 1-u(x,t)\}\to\dist(x,\pa\Om)^2 \ \mbox{ uniformly for $x\in\ovr{\Om}$ as $t\to 0^+$}
$$
(here $\dist(x,\pa\Om)$ is the distance of $x$ from $\pa\Om$),
we have that
\[
\dist(x(t),\cM)\to 0 \ \mbox{ as $t\to 0^+$},
\]
where
\begin{equation}
\label{inithotset}
\cM=\{x\in\Om: \dist(x,\pa\Om)=r_\Omega\}
\end{equation}
and
$$
r_\Omega=\max\{\dist(y,\pa\Om)\,:\,y\in\ovr{\Om}\}
$$
is the {\em inradius} of $\Omega$. In particular, we have that 
\begin{equation}
\label{short}
\dist(x(t),\pa\Om)\to r_\Om \ \mbox{ as $t\to 0^+$},
\end{equation}
For large times instead, we know that $x(t)$ must be close to the maximum point $x_\infty$ of the
first Dirichlet eigenfunction $\phi_1$ of $-\De.$ Indeed, denoting with $\la_1=\la_1(\Om)$
the eigenvalue corresponding to $\phi_1$, we have that $e^{\la_1 t} u(\cdot,t)$
converges to $\phi_1$ locally in $C^2$ as $t$ goes to $\infty$;
therefore (see \cite{Sa})
\begin{equation}
\label{large}
x(t)\to x_\infty \ \mbox{ as $t\to\infty$}.
\end{equation}
\par
While it is relatively easy to locate the set $\cM$ by geometrical means, \eqref{large}
does not give much information: locating either $x(t)$ or $x_\infty$ has more or less
the same difficulty.
In this paper, we shall develop geometrical means to estimate the location of
$x(t)$ (or $x_\infty$), based on two kinds of arguments.
\par
The former is somehow
reminiscent of the proof of the maximum principle of Alexandrov, Bakelmann and Pucci and of some ideas contained in \cite{Ta}, concerning properties of
solutions of the Monge-Amp\` ere equation. The estimates obtained in this way 
are applicable to any open bounded set, not necessarily convex.
\par
Let $\Om$ be a bounded open set 
and denote by $\cK$ the closure of its convex hull; we shall prove the following inequality (see Theorem \ref{th:nonconvessi}):
\begin{equation}
\label{prova}
\dist(x_\infty,\pa\cK)\geq c_N\, \frac{\diam(\Om)}{[\diam(\Om)^{2}\,\la_1(\Om)]^N}\,.
\end{equation}
Here, $\diam(\Omega)$ is the diameter of $\Om$ and $c_N$ is a constant, depending only 
on the dimension $N$, for which we will give the precise expression; observe that the 
quantity $\diam(\Om)^{2}\,\la_1(\Om)$ is scale invariant.
\par
When $\Omega$ is convex, 
more explicit bounds can be derived; for instance, the following one relates the distance of $x_\infty$ from $\pa\Om$ to the inradius and the diameter:
\begin{equation}
\label{prova1}
\dist(x_\infty,\pa\Om)\geq C_N\, r_\Om \left(\frac{r_\Om}{\diam(\Om)}\right)^{N^2-1},
\end{equation}
where again $C_N$ is a constant depending only on $N$ (see Theorem \ref{th:conv} for its expression). 
We point out that the so called {\em Santal\`o point} of $\Omega$ always satisfies \eqref{prova1}, hence this can also be used to locate such a point (see Section \ref{sec:polar} and Remark \ref{santalo}).
\par
The latter argument relies instead on the following idea from \cite{Fr,GNN}.
Let $\SSN$ be the unit sphere in $\RN.$ For $\om\in\SSN$ and $\la\in\RE$
define the hyperplane 
\begin{equation}
\label{hyperplane}
\pi(\la,\om)=\{x\in\RN: x\cdot\om=\la\},
\end{equation}
and the two half-spaces
\begin{equation}
\label{halfspaces}
\pi^+(\la,\om)=\{x\in\RN: x\cdot\om>\la\}\quad\quad \pi^-(\la,\om)=\{x\in\RN: x\cdot\om\leq\la\}
\end{equation}
(here the symbol $\cdot$ denotes the usual scalar product in $\RE^N$). 
Suppose $\pi(\la,\om)$ has non-empty intersection with
the interior of the conductor $\Om$ and set 
$$
\Om^+_{\la,\om}=\Om\cap\pi^+(\la,\om).
$$ 
Then
{\it if the reflection $\cT_{\la,\om}(\Om^+_{\la,\om})$ of $\Om_{\la,\om}$ with respect to the hyperplane $\pi(\la,\om)$ lies in $\Om,$ then
$\pi(\la,\om)$ cannot contain any critical point of $u.$} This is a simple consequence of
{\it Alexandrov's reflection principle} based on Hopf's boundary point lemma
(see Section \ref{sec:alexandrov} for details).
\par
Based on this remark, for a convex body $\cK$ we can define a (convex) set 
$\core(\cK)$ --- the {\it heart} of $\cK$ ---
such that $x(t)\in\core(\cK)$ for every $t>0$ 
(in fact, we will prove that $\cK\setminus\core(\cK)$ cannot contain the hot spot $x(t)$ for any $t>0$).
The heart $\core(\cK)$ of $\cK$ is easily obtained as
the set
$$
\core(\cK)=\bigcap\,\{\pi^-(\la,\om) : \cT_{\la,\om}(\cK^+_{\la,\om})\subset\cK\}.
$$
\par
As we shall see, the two methods have their advantages and drawbacks, but they are, in a sense, complementary.
On the one hand, while inequalities \eqref{prova} and \eqref{prova1} 
are quite rough in the case in which $\Om$ has some simmetry
(e.g. they do not allow to precisely locate $x_\infty$ even when $\Omega$ is a ball), by the second argument,
the problem of locating $x(t)$ is quite trivial; on the other hand, while in some cases
(e.g. when $\Om$ has no simmetries or $\pa\Om$ contains 
some flat parts, as Example \ref{exa:joint} explains), we cannot exclude that
the heart of $\cK$ extends up to the boundary $\pa\cK$ of $\cK,$
estimates \eqref{prova} and \eqref{prova1} turn out to be useful to quantitatively bound
$x(t)$ away from $\pa\cK.$
Thus, we believe that a joint use of both of them provides a very useful method to locate $x(t)$ or $x_\infty$.
\par
Studies on the problem of locating $x_\infty$ can also be found in \cite{GJ}: there,
by arguments different from ours and for the two-dimensional case, the location of $x_\infty$ is estimated within a distance comparable to the inradius, uniformly for arbitrarily large diameter.
\par
In Section \ref{sec:folding} we shall relate $\core(\cK)$ to a function $\mathcal{R}_\cK$ of the direction $\om$
--- the {\it maximal folding function} --- and we will construct ways to characterize it. We will also connect $\cR_\cK$ to the Fourier transform of the characteristic function of $\cK$: this should have some interest from a numerical point of view.
  Finally, in Section \ref{sec:algoritmo},
we will present an algorithm to compute $\cR_\cK$ when $\cK$ is a polyhedron:
based on this
algorithm, we shall present some numerical computations.

\section{Hot spots and polar sets}
\label{sec:polar}

In this section, if not otherwise specified, $\Omega$ is a bounded open set and we denote by $\cK$ the closure of the convex hull of $\Omega$. Notice that $\cK$ is a convex body, that is a compact convex set, with non empty interior. In what follows, $|E|$  denotes
the $N$-dimensional Lebesgue measure of a set $E\subset\RE^N $
and $|\pa E|$ the $(N-1)$-dimensional Hausdorff measure of its boundary;
also, $\om_k$ will be the volume of the unit ball in $\RE^k.$

\subsection{Preliminaries}

We recall here some notations from \cite{Sc}.
 The {\it gauge function $j_{p}$ of $\cK$ centered at a point $p\in\cK$} is the function defined by
\[
j_{p}(x)=\min\{\la\ge 0\ :\ x-p\in\la(-p+\cK)\},\ x\in\RE^N\,.
\]
Observe that we have $j_p(t\, (x-p)+p)=t\, j_p(x)$ for every $t>0$; in particular, if $0\in\cK$ then $j_0$ is 1-homogeneous.
We set
\begin{equation}
\label{g1}
g_p(x)=\left\{\begin{array}{cl}
j_{p}\left(x\right)-1,& \mbox{ if } x\in\RE^N\setminus\{p\},\\
-1,& \mbox{ if } x=p.
\end{array}
\right.
\end{equation}
so that $g_p$ is the convex function whose graph is the cone projecting $\pa\cK$ from the point $(p,-1)\in\RE^{N+1}$.

It is also useful to recall the definition of the
{\em support function} $h_\cK$ of $\cK,$ that is
\begin{equation}
\label{suppfun}
h_{\cK}(\xi)=\max\{x\cdot \xi\,:\, x\in\cK\}\,,\quad \xi\in\RE^N.
\end{equation}
As it is easily seen, $h_\cK$ is a $1$-homogeneous convex function; viceversa, 
to any convex $1$-homogeneous function $h,$ it corresponds exactly one convex body whose
support function is $h$ (refer to \cite{Sc}, for instance). 
\par
The {\em polar set of $\cK$ with respect to $p$} 
is the convex set $\cK^*_{p}$ coinciding with the unit ball of the ``norm''\footnote{Properly speaking this is not a norm, since in general we have $\|-x\|*\not=\| x\|_*$.} $\|\cdot\|_*=h_\cK(\cdot)$ centered at $p$, that is
$$
\cK^*_p=\bigl\{y\in\RE^N\,:\, (x-p)\cdot (y-p)\leq 1\,\text{ for every }x\in\cK\bigr\};
$$
if $p$ is in the interior of $\cK$, then $\cK^*_p$ is compact.
Observe that this can be equivalently defined as
\[
\cK^*_p=\bigl\{y\in\RE^N\,:\, (x-p)\cdot (y-p)\leq j_p(x)\,\text{ for every }x\in\RE^N\bigr\}.
\]
We also recall that for every convex body $\cK$ the function $\psi:\cK\to[0,\infty]$ defined by
$\psi(x)=|\cK^*_x|$ attains a positive minimum at some point $s_\cK\in\cK$, which is  called the {\em Santal\`{o} point} of $\cK$ (see \cite{Sc}). 
When referring to the polar of $\cK$ with respect to its Santal\`{o} point, we simply write $\cK^*$, instead of $\cK^*_{x_\cK}$: we will see that the method developed in the next subsection for estimating the hot spot, applies to the Santal\`o point as well.
\par 
It is not difficult to see that $\cK^*_p$ coincides (up to a translation) with the {\it subdifferential} $\pa g_p$ of $g_p$ at the point $p$, i.e.
\begin{equation}
\label{polar}
\pa g_p(p)=\cK^*_p-p,
\end{equation}
where for  every $x_0$
$$
\pa g_p(x_0)=\{\xi\in\RE^N: g_p(x)\ge g_p(x_0)+\xi\cdot (x-x_0),\, x\in\RE^N\}.
$$
\par
Finally, we will need the following monotonicity property of the subdifferential of a function:
the proof can be found in \cite{Gu}.
\begin{lm}
\label{th:sub}
Let $u_1$ and $u_2$ be continuous convex functions on $\cK$ such that $u_1=u_2$ on $\pa\cK.$ 
Define:
\[
\pa u_i(\cK)=\bigcup\limits_{x\in \cK}\pa u_i(x), \ \ i=1, 2.
\]
If $u_1\le u_2$ in $\cK$, then
\(
\pa u_2(\cK)\subseteq \pa u_1(\cK).
\)
\end{lm}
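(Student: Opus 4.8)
The plan is to use the classical ``sliding supporting hyperplane'' device. First I would fix an arbitrary $\xi\in\pa u_2(\cK)$, so that there is a point $x_0\in\cK$ with $\xi\in\pa u_2(x_0)$, i.e.
\begin{equation*}
u_2(x)\ge u_2(x_0)+\xi\cdot(x-x_0)\qquad\text{for every }x\in\cK .
\end{equation*}
The goal is to exhibit a point $y_0\in\cK$ at which an affine function with gradient $\xi$ supports $u_1$ from below, since this is precisely the statement $\xi\in\pa u_1(y_0)\subseteq\pa u_1(\cK)$. To this end I would look at the family of affine functions $a_c(x)=\xi\cdot(x-x_0)+c$: for $c$ very negative one has $a_c<u_1$ on all of $\cK$ (both functions are continuous and $\cK$ is compact), and one then raises $c$ until the graph of $a_c$ first touches the graph of $u_1$. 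Analytically, this amounts to setting
\begin{equation*}
m=\min_{x\in\cK}\bigl[u_1(x)-\xi\cdot(x-x_0)\bigr],
\end{equation*}
a minimum attained at some $y_0\in\cK$ by compactness; by construction $u_1(x)\ge u_1(y_0)+\xi\cdot(x-y_0)$ for all $x\in\cK$, that is $\xi\in\pa u_1(y_0)$. Here $\pa u_i(\cK)$ is of course to be read as the union of the subdifferentials over the interior of $\cK$, as in \cite{Gu}.

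The only delicate point --- and the place where the hypotheses $u_1\le u_2$ in $\cK$ and $u_1=u_2$ on $\pa\cK$ are used --- is to make sure the touching point $y_0$ can be chosen in the interior of $\cK$. Suppose the minimum defining $m$ were attained only on $\pa\cK$, and pick such a $y_0\in\pa\cK$. Using $u_1=u_2$ on $\pa\cK$ and then the inequality $\xi\in\pa u_2(x_0)$ evaluated at $x=y_0$,
\begin{equation*}
m=u_1(y_0)-\xi\cdot(y_0-x_0)=u_2(y_0)-\xi\cdot(y_0-x_0)\ge u_2(x_0);
\end{equation*}
on the other hand, testing the definition of $m$ at $x=x_0$ and using $u_1\le u_2$,
\begin{equation*}
m\le u_1(x_0)\le u_2(x_0).
\end{equation*}
Hence $m=u_1(x_0)$, so $x_0$ itself realizes the minimum defining $m$; since $x_0$ is an interior point of $\cK$, this gives $\xi\in\pa u_1(x_0)\subseteq\pa u_1(\cK)$, and the proof is complete. (If instead one reads the statement with the union taken over the whole compact $\cK$, this last paragraph is superfluous, since any minimizer $y_0\in\cK$ already works; but in that reading the assumptions play no role and the statement is essentially empty.)

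I expect the placement of $y_0$ in the interior to be the \emph{main}, indeed the only, obstacle: without the two boundary-compatibility assumptions the rising hyperplane could first meet the graph of $u_1$ only above $\pa\cK$, and then one could not conclude. It is worth noticing that neither convexity of $u_1,u_2$ nor any regularity beyond continuity is actually exploited above; convexity only serves to make the notion of subdifferential the natural one in this setting.
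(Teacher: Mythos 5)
The paper itself gives no proof of this lemma: it simply states that ``the proof can be found in \cite{Gu}'' (Gutierrez's book, where it appears as a monotonicity property of the normal mapping). Your sliding-hyperplane argument is, in substance, exactly the proof given in that reference: pick a supporting slope $\xi$ for $u_2$ at an interior point $x_0$, translate the affine function with slope $\xi$ down until it first touches the graph of $u_1$ at some minimizer $y_0$ of $x\mapsto u_1(x)-\xi\cdot(x-x_0)$, and then use $u_1\le u_2$ together with $u_1=u_2$ on $\pa\cK$ to force, in case $y_0\in\pa\cK$, that $x_0$ is itself a minimizer and hence the touching happens in the interior. Your computation of the two bounds for $m$ is correct, and the logic (``either the minimum is already interior, or $x_0$ works'') is sound.

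You also correctly identify the one real subtlety, which the paper glosses over: if one reads $\pa u_i(\cK)=\bigcup_{x\in\cK}\pa u_i(x)$ literally over the \emph{compact} $\cK$, with the subdifferential relative to $\cK$, then $\pa u_i(\cK)=\RE^N$ for any continuous convex $u_i$ (the minimizer argument applied to any $\xi$ shows this), and the lemma is vacuous; in the cited source the underlying domain is \emph{open}, and the intended reading here is the union over the interior of $\cK$. That reading is also the only one consistent with how the lemma is invoked in the proof of Theorem \ref{thmestimx0}, where $\pa G(\cK)=M(t)(\cK^*_{x(t)}-x(t))$ must be a bounded set. In short: your proof is correct and coincides with the argument in the paper's reference, and your remark about the domain of the union is a genuine (if minor) clarification of the statement.
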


\subsection{The polar set of $\cK$ with respect to the hot spot}

The following result holds for a general domain $\Om$ and is the cornerstone of our estimates. 

\begin{teo}\label{thmestimx0}
(i) Let $u$ be the solution of the initial-boundary value problem \eqref{heatconductor} and,
for every fixed time $t\in(0,+\infty)$, let $x(t)\in\Omega$ be a hot spot at time $t$,
that is a point where the value
$$
M(t)=\max\limits_{\overline\Omega} u(\cdot,t)
$$ 
is attained.
\par
Then
\begin{equation}\label{eqthm1}
|\cK^*_{x(t)}|\le [N\, M(t)]^{-N}\int_{\cC(t)} |u_t(x,t)|^N dx,
\end{equation}
where $\cC(t)$ is the {\em contact set} at time $t$, i.e. the subset of $\Om$ where $-u(\cdot,t)$ coincides with its convex envelope.
\par
(ii) Let $\la_1(\Om)$ and $\phi_1$ be respectively 
the first Dirichlet eigenvalue and eigenfunction of $-\Delta$ in $\Om.$ 
Let $x_\infty$ be  a maximum point of $\phi_1$ in $\Om$ and set $M_\infty=\phi_1(x_\infty).$
\par
Then
\begin{equation}
\label{eqthm2}
|\cK^*_{x_\infty}|\le \left[\frac{\lambda_1(\Omega)}{N\, M_\infty}\right]^N \int_{\cC} \phi_1(x)^N\ dx\,,
\end{equation}
where $\cC$ is the contact set of $\phi_1$, i.e. the subset of $\Om$ 
where $-\phi_1$ coincides with its convex envelope.
\end{teo}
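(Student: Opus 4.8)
I would run an Alexandrov--Bakelmann--Pucci type argument in which the cone over $\pa\cK$ with vertex $(x(t),-M(t))$ --- that is, $M(t)\,g_{x(t)}$ in the notation of \eqref{g1} --- serves as the barrier. It is convenient to treat (i) and (ii) as the two instances $w=u(\cdot,t)$ and $w=\phi_1$ of a single assertion: if $w\in C^2(\Om)\cap C(\ovr\Om)$ is nonnegative, vanishes on $\pa\Om$ and attains its maximum $\mu>0$ at some $x_w\in\Om$, then $|\cK^*_{x_w}|\le(N\mu)^{-N}\int_{\cC_w}|\De w|^N\,dx$, where $\cC_w\subset\Om$ is the set on which $-w$ agrees with its convex envelope. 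Indeed \eqref{eqthm1} is this with $\De w=u_t$, and \eqref{eqthm2} with $\De w=-\la_1(\Om)\phi_1$, so that $|\De w|=\la_1(\Om)\,\phi_1$ on $\cC_w$.

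To prove the reduced assertion I would extend $w$ by $0$ on $\cK\setminus\Om$ to get $\tilde w\in C(\cK)$, and let $\Ga$ be the convex envelope of $-\tilde w$ on $\cK$. Three elementary facts come first: $\Ga=0$ on $\pa\cK$ (each boundary point lies in the relative interior of a face of $\cK$ on which $-\tilde w\equiv0$, hence is a convex combination of points of that face only); $\Ga(x_w)=-\mu$ (the constant $-\mu$ is the global minimum of $-\tilde w$); and $\Ga\le\mu\,g_{x_w}$ on $\cK$, since along any segment $\ga(s)=x_w+s(q-x_w)$, $s\in[0,1]$, from $x_w$ to a point $q\in\pa\cK$ one has $j_{x_w}(\ga(s))=s$, whence by convexity $\Ga(\ga(s))\le(1-s)\Ga(x_w)+s\,\Ga(q)=\mu(s-1)=\mu\,g_{x_w}(\ga(s))$. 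As $\Ga$ and $\mu\,g_{x_w}$ are continuous convex functions on $\cK$ that coincide on $\pa\cK$ and satisfy $\Ga\le\mu\,g_{x_w}$, Lemma \ref{th:sub} gives $\pa(\mu\,g_{x_w})(\cK)\subseteq\pa\Ga(\cK)$; restricting the left-hand side to the vertex $x_w$ and using \eqref{polar} yields $\mu(\cK^*_{x_w}-x_w)=\mu\,\pa g_{x_w}(x_w)\subseteq\pa\Ga(\cK)$.

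Next I would localise these slopes onto $\cC_w$: by the Carath\'eodory description of the convex envelope, every slope in $\pa\Ga(\cK)$ is attained at a point where $\Ga=-\tilde w$, which is either in $\cC_w$ or in $\cK\setminus\Om$; in the latter case $\Ga=0=\max_\cK\Ga$ there, so the slope lies in the normal cone of $\cK$ at that point, and for a slope $\xi\in\mu(\cK^*_{x_w}-x_w)$ this --- together with $\Ga(x_w)=-\mu$ and the definition of $\cK^*_{x_w}$ --- forces $\xi\cdot(y-x_w)=\mu$, i.e. $\xi\in\pa\bigl(\mu(\cK^*_{x_w}-x_w)\bigr)$, a null set. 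Hence $\mu^N|\cK^*_{x_w}|=\bigl|\mu(\cK^*_{x_w}-x_w)\bigr|\le|\pa\Ga(\cC_w)|$. Finally, on $\cC_w$ the convex function $\Ga$ touches the smooth $-w$ from below, so it is $C^{1,1}$ there with $\na\Ga=-\na w$; the area formula then gives $|\pa\Ga(\cC_w)|=|\na\Ga(\cC_w)|\le\int_{\cC_w}\det D^2\Ga\,dx$, and since $0\le D^2\Ga\le-D^2w$ a.e. on $\cC_w$ (because $-w-\Ga\ge0$ is minimal there, so in particular $-\De w\ge0$ a.e. on $\cC_w$), the arithmetic--geometric mean inequality gives $\det D^2\Ga\le(\De\Ga/N)^N\le(|\De w|/N)^N$. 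Combining these inequalities yields $\mu^N|\cK^*_{x_w}|\le N^{-N}\int_{\cC_w}|\De w|^N\,dx$, as required. I expect the main obstacles to be precisely the two technical points just used: controlling the ``escape'' of the subdifferential of $\Ga$ toward $\pa\cK$ in the localisation step, and the $C^{1,1}$ regularity of the convex envelope on its contact set, which is what makes the area formula applicable without a singular contribution to the Monge--Amp\`ere measure of $\Ga$.
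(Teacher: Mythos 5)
Your argument is correct and follows the same ABP-type route as the paper: compare the convex envelope $\Ga$ of $-w$ (extended by $0$ to $\cK$) with the cone $\mu\,g_{x_w}$ via Lemma \ref{th:sub}, pass from $\mu(\cK^*_{x_w}-x_w)=\pa(\mu g_{x_w})(\cK)\subseteq\pa\Ga(\cK)$ to $\int_{\cC_w}|\det D^2 w|\,dx$, and finish with the arithmetic--geometric mean inequality. You are in fact a bit more careful than the paper about two points it delegates to the citation of Guti\'errez's book --- showing that slopes picked up at contact points in $\cK\setminus\Om$ or on $\pa\cK$ form a null set, and invoking the $C^{1,1}$ touching on $\cC_w$ to justify the area-formula step --- but the underlying strategy is identical.
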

\begin{proof}
(i) For $t\in(0,\infty),$ let $U^{(t)}:\cK\to\RE$ denote the convex envelope of $-u(\cdot,t);$ 
then, $U^{(t)}$ is a continuous convex function in $\cK$, such that $U^{(t)}=0$ on $\partial\cK.$
\par
The function $G$ defined by $G(x)=M(t)\, g_{x(t)}(x),$ for $x\in\cK,$
is such that
$$
G \ge U^{(t)} \ \mbox{ in } \cK \quad \mbox{ and } \quad G=U^{(t)} \quad \mbox{ on } \ \pa\cK,
$$
and hence
\(
\pa G(\cK)\subseteq \pa U^{(t)}(\cK)
\)
by Lemma \ref{th:sub}.
By the rescaling properties of the subdifferential and \eqref{polar}, we know that
$$
\pa G(\cK)=M(t)\, (\cK^*_{x(t)}-x(t)),
$$
thus, 
$$
|\cK^*_{x(t)}|\le M(t)^{-N}|\pa U^{(t)}(\cK)|\,.
$$
\par
On the other hand, by Sard's Lemma and the formula for change of variables (see for instance \cite[Section 1.4.2]{Gu}), we obtain
$$
|\pa U^{(t)}(\cK)|\leq\int_{\cC(t)}|\det D^2 u(x,t)|\ dx\,,
$$
with $\cC(t)=\{x\in\Omega\ :\ U^{(t)}(x)=-u(x,t)\}$. Observe that the contact set is not empty, thanks to the fact that $x(t)\in\cC(t)$ and moreover we have $|\cC(t)|>0$.
Now, by the arithmetic-geometric mean inequality, we have in $\cC(t)$ that
$$
|\det D^2 u(x,t)|^{1/N}\leq \frac{|\De u(x,t)|}{N}\,,
$$
which yields
$$
\int_{\cC(t)}|\det D^2 u(x,t)|\ dx\leq N^{-N}\int_{\cC(t)} |\De u(x,t)|^N\ dx.
$$
\par
Therefore, we finally obtain that
$$
|\cK^*_{x(t)}|\leq [N\, M(t)]^{-N}\int_{\cC(t)} |\De u(x,t)|^N\ dx\,.
$$
and we conclude the proof by simply using the equation $\De u=u_t.$
\par
(ii) The proof runs similarly to case (i).
\end{proof}

Estimates \eqref{eqthm1} and \eqref{eqthm2} are generally difficult to handle. The following weaker forms of \eqref{eqthm2} may be more useful.
\begin{coro}
\label{cor:estimxinfty}
Under the same assumptions as in Theorem \ref{thmestimx0}, we have:
\begin{equation}\label{coroll}
|\cK^*_{x_\infty}|\le \left[\frac{\lambda_1(\Omega)}{N}\right]^N |\Om|\,.
\end{equation}
and
\begin{equation}\label{coroll2}
|\cC|\ge\left[\frac{N}{\lambda_1(\Omega)}\right]^N\,|\cK^*|\, ;
\end{equation}
we recall that $\cK^*$ denotes the polar set of $\cK$ with respect to the Santal\`{o} point.
\end{coro}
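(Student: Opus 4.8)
The plan is to derive both inequalities directly from estimate \eqref{eqthm2} of Theorem \ref{thmestimx0}, using only two elementary observations: that $\phi_1\le M_\infty$ on all of $\Om$ (hence on the contact set $\cC$), and that the Santal\`o point is, by definition, a minimizer of $p\mapsto|\cK^*_p|$ over $\cK$.

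First I would prove \eqref{coroll}. Since $M_\infty=\max_{\ovr\Om}\phi_1$, on the contact set $\cC\subset\Om$ we trivially have $\phi_1(x)^N\le M_\infty^N$, so that
\[
\int_{\cC}\phi_1(x)^N\,dx\le M_\infty^N\,|\cC|\le M_\infty^N\,|\Om|.
\]
Plugging this bound into \eqref{eqthm2}, the factor $M_\infty^N$ cancels against the $M_\infty^{-N}$ already present, and \eqref{coroll} follows at once.

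For \eqref{coroll2}, I would keep the intermediate estimate produced along the way: combining \eqref{eqthm2} with $\int_{\cC}\phi_1^N\le M_\infty^N|\cC|$ gives
\[
|\cK^*_{x_\infty}|\le\left[\frac{\la_1(\Om)}{N}\right]^N|\cC|.
\]
Now, the maximum point $x_\infty$ of $\phi_1$ lies in $\Om\subset\cK$, and by the very definition of the Santal\`o point $s_\cK$ (the minimizer of $\psi(p)=|\cK^*_p|$ on $\cK$) we have $|\cK^*|=|\cK^*_{s_\cK}|\le|\cK^*_{x_\infty}|$. Hence $|\cK^*|\le[\la_1(\Om)/N]^N|\cC|$, and multiplying through by $[N/\la_1(\Om)]^N$ yields \eqref{coroll2}.

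There is essentially no obstacle in this argument; the only points worth recording are that $x_\infty\in\Om$, so that the minimality property of the Santal\`o point can indeed be applied to it, and that $|\cC|>0$ (as noted in the proof of Theorem \ref{thmestimx0}), so that the resulting bounds are not vacuous.
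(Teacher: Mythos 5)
Your argument is correct and is exactly the intended deduction: the paper omits the proof, evidently because \eqref{coroll} follows by bounding $\phi_1\le M_\infty$ inside the integral in \eqref{eqthm2} (so the $M_\infty$ factors cancel) and then enlarging $|\cC|$ to $|\Om|$, while \eqref{coroll2} follows by instead retaining $|\cC|$ and invoking the Santal\`o-point minimality $|\cK^*|\le|\cK^*_{x_\infty}|$, precisely the mechanism the paper reuses in Remark \ref{santalo}. You have also correctly flagged the two small hypotheses that make this work ($x_\infty$ interior so the Santal\`o minimality applies, and $|\cC|>0$ so the bound is non-vacuous), so nothing is missing.
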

\begin{oss}\label{rem:coroll}
As is well-known, $|\cK^*|$ can be estimated from below by 
$m_N /|\cK|,$ where $m_N$ is a positive constant (see \cite{Sc}).
Thus, \eqref{coroll2} becomes
$$
\frac{|\cC|}{|\cK|}\ge m_N\,\left[\frac{N}{\lambda_1(\Omega)|\cK|^{2/N}}\right]^N.
$$
\end{oss}
\begin{oss}\label{rem:GNN}
In \cite[p. 223]{GNN} the following problem is posed:  
\begin{quotation}
[...] Suppose $u>0$ is a solution of
\begin{equation}\label{GNN}
-\Delta u=f(u) \,\mbox{ in } \,\Omega, \quad u=0 \,\mbox{ on } \,\partial\Omega
\end{equation}
in a bounded domain $\Omega$ in $\mathbb{R}^N$, say $u\in
C^2(\overline{\Omega})$. Is there some $\varepsilon>0$ depending only on
$\Omega$ (i.e., independent of $f$
and $u$) such that $u$ has no stationary point in an
$\varepsilon$-neighbourhood of $\pa\Om$?
\par
This is true for $N=2$ in case $f(u)\ge 0$ for $u\ge 0$, but for $N> 2$ the
problem is open. [...]
\end{quotation}
\par
Here, we point out that by the same arguments used 
for Theorem \ref{thmestimx0}, we can easily prove the following estimate:
\begin{equation}
\label{est-for-f(u)}
|\cK_{x_0}^*|\le \left(\frac{|f(0)|+LM_0}{N M_0}\right)^N |\Om|,
\end{equation}
where $L$ is the Lipschitz constant for $f$ 
and $x_0$ is a point where $u$ achieves its maximum $M_0.$ 
When $f(0)=0,$ we obtain the inequality
\begin{equation}
\label{est-for-f(u)2}
|\cK_{x_0}^*|\le \left(\frac{L}{N}\right)^N |\Om|
\end{equation}
--- an estimate, similar to \eqref{coroll}, that can be used to bound $\dist(x_0,\pa\cK)$
from below in a way similar to that of Theorem \ref{th:nonconvessi} below. 
\par
An interesting instance of \eqref{est-for-f(u)} occurs when $f\equiv 1$ --- in this
case $u$ is the {\it torsional creep} of an infinite bar with cross-section $\Om;$ 
we thus obtain:
$$
|\cK_{x_0}^*|\le \frac{|\Om|}{(N M_0)^N}.
$$
This inequality can also be viewed as an estimate for the maximum $M_0$ in the spirit of the
Alexandrov-Bakelman-Pucci principle.
\end{oss}

Using the definition of the polar set, it is easy to see that $|\cK^*_x|$ goes to $\infty$ as the point $x$ approaches the boundary.
The following lemma gives a quantitative version of this fact and helps us to provide  
explicit estimates of the position of $x_\infty.$
\begin{lm}\label{lmestimK*}
Let $p$ be any point belonging to the interior of $\cK$ and define 
$R(p)=\max\{ |p-y|\,:\,y\in\pa\cK\}$. 
Then 
\begin{equation}
\label{boundK*}
|\cK^*_p|\geq\frac{\om_{N-1}/N}{R(p)^{N-1}\,\dist(p,\pa\cK)}\,.
\end{equation}
\end{lm}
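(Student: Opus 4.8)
The plan is to exhibit, inside the convex body $\cK^*_p$, an explicit solid cone whose volume is exactly the right-hand side of \eqref{boundK*}. The shape of this cone reflects the geometry of the polar set: $\cK^*_p$ contains the ball of radius $1/R(p)$ centered at $p$, but it is stretched in the direction of the boundary point of $\cK$ nearest to $p$, reaching out to distance $1/\dist(p,\pa\cK)$; the cone interpolates between these two features.

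After a translation I would assume $p=0$ (writing again $\cK$ for the translated body; note that $R(p)$ and $\dist(p,\pa\cK)$ are unaffected), so that
\[
\cK^*_0=\{y\in\RN:\ x\cdot y\le 1\ \text{for every}\ x\in\cK\}=\{y\in\RN:\ h_\cK(y)\le 1\}.
\]
Two containments then suffice. First, since the distance from the origin is convex and hence maximized on $\pa\cK$, one has $\cK\subseteq\ovr{B_{R(p)}(0)}$, and consequently, by the Cauchy--Schwarz inequality, $\ovr{B_{1/R(p)}(0)}\subseteq\cK^*_0$; in particular, for every unit vector $\nu$, the $(N-1)$-dimensional disk $D=\{y\in\RN:\ y\cdot\nu=0,\ |y|\le 1/R(p)\}$ lies in $\cK^*_0$. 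Second, I would choose $\nu$ pointing at the nearest boundary point: pick $q\in\pa\cK$ realizing $d:=\dist(0,\pa\cK)$, so $|q|=d$, and set $\nu=q/d$. Since $B_d(0)\subseteq\cK$ while $q\in\pa B_d(0)\cap\pa\cK$, any supporting hyperplane of $\cK$ at $q$ must be tangent to the ball $B_d(0)$ at $q$, hence orthogonal to $\nu$; this yields $x\cdot\nu\le q\cdot\nu=d$ for all $x\in\cK$, i.e. $h_\cK(\nu)=d$, so that the point $a:=\nu/d$ belongs to $\cK^*_0$.

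To finish, by convexity $\cK^*_0$ contains the convex hull of $D\cup\{a\}$, that is, the solid cone with base $D$ and apex $a$. The apex lies at distance $1/d$ from the hyperplane $\{y\in\RN:\ y\cdot\nu=0\}$ carrying the base $D$, whose $(N-1)$-dimensional measure is $\om_{N-1}\,R(p)^{-(N-1)}$; therefore this cone has $N$-dimensional volume $\tfrac1N\,\om_{N-1}\,R(p)^{-(N-1)}\,d^{-1}$, which is precisely the lower bound asserted in \eqref{boundK*}. No genuine computational obstacle appears; the single step that needs a careful (though standard) argument is the geometric identity $h_\cK(\nu)=\dist(p,\pa\cK)$ --- equivalently, the fact that the supporting hyperplane of $\cK$ at the boundary point nearest to $p$ is orthogonal to the corresponding radius --- since this is exactly what lets the cone reach out to distance $1/\dist(p,\pa\cK)$ from $p$.
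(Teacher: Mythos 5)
Your proof is correct, and it rests on the same geometric idea as the paper's: enclose $\cK$ in the intersection of the ball $B(p,R(p))$ with a half-space supporting $\cK$ at a nearest boundary point, then exploit the inclusion-reversing property of polarity. Where you diverge is in the final volume estimate. The paper takes the full polar $E^*_p$ of that enclosing body (the convex hull of the ball $B(p,1/R)$ and the apex $a$), computes $|E^*_p|$ exactly as $\frac{\om_{N-1}}{N R^{N-1}d}\bigl\{(1-\si^2)^{(N+1)/2}+N\si\int_{-1}^{\si}(1-\tau^2)^{(N-1)/2}d\tau\bigr\}$ with $\si=d/R$, and then argues that the bracketed quantity is an increasing function of $\si$ hence bounded below by its value $1$ at $\si=0$. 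You instead discard the spherical cap altogether and keep only the cone over the equatorial $(N-1)$-disk $D\subset B(p,1/R)$ orthogonal to $\nu$, with the same apex $a$; this cone sits inside $E^*_p$, and its volume is \emph{exactly} the asserted bound $\om_{N-1}/(N R^{N-1}d)$. This sidesteps the explicit integral computation and the monotonicity check entirely, at no cost in generality. The one genuinely geometric ingredient — that the supporting hyperplane at the nearest boundary point is tangent to the inscribed ball, which gives $h_\cK(\nu)=d$ and hence $a=\nu/d\in\cK^*_p$ — is the same step the paper uses (there phrased as the definition of the half-space $H^+$); you have articulated it correctly. The proposal is sound and, if anything, slightly cleaner than the printed proof.
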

\begin{proof}
Set $d=\dist(p,\pa\cK)$ and $R=R(p).$ 
Obviously $\cK$ is contained in the ball $B(p,R)$ centered at $p$ with radius $R$  and in the halfspace 
$$
H^+=\{y\in\RE^N\,:\,(y-\bar p)\cdot (\bar p-p)\leq d^2\}
$$
supporting $\cK$ at any point $\bar p$ such that
$|p-\bar p|=d.$
Set $E=B(p,R)\cap H^+;$ then 
$\cK\subseteq E$
whence
$$
\cK^*_p\supseteq E_p^*\,.
$$
Now notice that $E_p^*$ is the convex envelope of the union of the ball
$B(0,R^{-1})$ and the point $q=p+d^{-2}(\bar p-p)$; its volume is explicitly computed:
$$
|E_p^*|=\frac{\om_{N-1}}{N\,R^{N-1}\,d}\,
\left\{(1-\si^2)^{\frac{N+1}{2}}+N\,\si
\int_{-1}^\si (1-\tau^2)^{\frac{N-1}{2}}d\tau\right\},
$$
with $\si=d/R\in[0,1].$ 
Thus, \eqref{boundK*} is readily obtained by observing that the function of $\si$ into the braces 
is increasing and hence bounded below by $1.$
\end{proof}
We are now ready to prove the first quantitative estimate on the location of $x_\infty$: 
this will result from a combination of the previous lemma and \eqref{coroll}.
\begin{teo}
\label{th:nonconvessi}
Under the same assumptions of Theorem \ref{thmestimx0}, we have that
\begin{equation}\label{preciseestimd}
\dist(x_\infty,\partial\cK)\geq N^{N-1}\,\om_{N-1}\frac{\diam(\Omega)}
{\Big(|\Omega|^{1/N}\,\diam(\Omega)\,\lambda_1(\Omega)\Big)^N}\,\,.
\end{equation}
\par 
In particular, the following estimate holds true: 
\begin{equation}
\label{estimd}
\dist(x_\infty,\pa\cK)\geq \,2^N\,N^{N-1}\,\frac{\om_{N-1}}{\om_N} \frac{\diam(\Om)}{\Big(\diam(\Omega)^2\,\la_1(\Om)\Big)^N}.
\end{equation}
\end{teo}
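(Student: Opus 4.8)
The plan is to combine the lower bound for $|\cK^*_{x_\infty}|$ furnished by Lemma \ref{lmestimK*} with the upper bound \eqref{coroll} of Corollary \ref{cor:estimxinfty}, and then to replace the geometric radius $R(x_\infty)$ by the diameter of $\Om$.

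First I would observe that $x_\infty$, being an interior maximum point of $\phi_1$ in the open set $\Om$, lies in the interior of $\cK$, so that Lemma \ref{lmestimK*} applies at $p=x_\infty$ and gives
\[
|\cK^*_{x_\infty}|\ \ge\ \frac{\om_{N-1}/N}{R(x_\infty)^{N-1}\,\dist(x_\infty,\pa\cK)}.
\]
Chaining this with $|\cK^*_{x_\infty}|\le[\la_1(\Om)/N]^N|\Om|$ from \eqref{coroll} and solving for the distance yields
\[
\dist(x_\infty,\pa\cK)\ \ge\ \frac{N^{N-1}\,\om_{N-1}}{R(x_\infty)^{N-1}\,\la_1(\Om)^N\,|\Om|}.
\]
Since $x_\infty\in\Om\subseteq\cK$ and the diameter of a convex body equals the diameter of any set whose convex hull it is, we have $|x_\infty-y|\le\diam(\cK)=\diam(\Om)$ for every $y\in\pa\cK$, i.e. $R(x_\infty)\le\diam(\Om)$. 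Substituting this and rewriting the right-hand side as $N^{N-1}\om_{N-1}\,\diam(\Om)\big(|\Om|^{1/N}\diam(\Om)\la_1(\Om)\big)^{-N}$ produces \eqref{preciseestimd}.

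Finally, to obtain the fully scale-invariant bound \eqref{estimd} I would insert into \eqref{preciseestimd} the isodiametric inequality $|\Om|\le\om_N\big(\diam(\Om)/2\big)^N$; a short simplification then gives $\dist(x_\infty,\pa\cK)\ge 2^N N^{N-1}(\om_{N-1}/\om_N)\,\diam(\Om)\big(\diam(\Om)^2\la_1(\Om)\big)^{-N}$, as claimed. I do not anticipate any real obstacle here: all of the analytic content is already packaged in Theorem \ref{thmestimx0}, Corollary \ref{cor:estimxinfty} and Lemma \ref{lmestimK*}, so the only points requiring care are that $x_\infty$ is genuinely interior to $\cK$ (needed to invoke Lemma \ref{lmestimK*}), the elementary identity $\diam(\cK)=\diam(\Om)$, and applying the isodiametric inequality in the correct direction.
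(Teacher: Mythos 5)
Your proposal is correct and follows exactly the same route as the paper: apply Lemma \ref{lmestimK*} at $p=x_\infty$, chain it with \eqref{coroll}, bound $R(x_\infty)$ by $\diam(\cK)=\diam(\Om)$, and then invoke the isodiametric inequality. The only (harmless) addition is your explicit remark that $x_\infty$ is interior to $\cK$, which the paper leaves implicit.
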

\begin{proof}
Applying Lemma \ref{lmestimK*} with $p=x_\infty$ and Corollary \ref{cor:estimxinfty} gives:
$$
\left(\frac{\lambda_1(\Omega)}{N}\right)^N|\Om|\geq\frac{\om_{N-1}/N}{R(x_\infty)^{N-1}\,\dist(x_\infty,\pa\cK)}\,.
$$
Thus, \eqref{preciseestimd} easily follows by observing that $\diam(\cK)=\diam(\Omega)\geq R(x_\infty)$.
\par
Finally, using the isodiametric inequality 
$$
|\Omega|\leq\om_N\left[\frac{\diam(\Omega)}{2}\right]^N\!\!,
$$
in conjunction with \eqref{preciseestimd}, we show the validity of \eqref{estimd}.
\end{proof}

Estimates \eqref{preciseestimd} and \eqref{estimd} involve the first eigenvalue $\la_1(\Om)$, which in general is not easy to compute explicitly;
when $\Omega$ is convex, we can estimate $\lambda_1(\Omega)$ from above by means of basic geometric quantities, thus providing an easily computable lower bound on $\dist(x_\infty,\pa\Om)$. 
\par
This is the content of the following Theorem, which represents the main contribution of this section.
\begin{teo}
\label{th:conv}
If $\Omega$ is convex, then
\begin{equation}
\label{preciseestimd2}
\dist(x_\infty,\pa\Om)\ge r_\Om\,\left[\frac{\omega_{N-1} N^{2N-1}}{\la_1(B_1)^N}\,\, \mathrm{IPR}(\Om)^{-N}\,\left(\frac{r_\Omega}{\diam(\Om)}\right)^{N-1}\right],
\end{equation}
where $r_\Omega$ is the inradius of $\Om$, $\la_1(B_1)$ denotes 
the first Dirichlet eigenvalue of $-\De$ in the unit ball and $\mathrm{IPR}(\Om)=|\pa\Om||\Om|^{1/N-1}$ is the isoperimetric ratio of $\Om$.
\par
In particular, the following bound from below on $\dist(x_\infty,\pa\Om)$ holds true
\begin{equation}
\label{estimd2}
\dist(x_\infty,\pa\Om)\ge r_\Omega\left[\frac{(2^N N)^{N-1}}{\la_1(B_1)^N}\,\frac{\omega_{N-1}}{\omega_N}\,\left(\frac{r_\Omega}{\diam(\Om)}\right)^{N^2-1}\right].
\end{equation} 
\end{teo}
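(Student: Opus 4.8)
The plan is to feed into the polar-set machinery of this section a sharp upper bound for $\la_1(\Om)$ that holds on convex bodies. Since $\Om$ is convex we have $\cK=\ovr\Om$, so applying Lemma \ref{lmestimK*} at $p=x_\infty$ together with \eqref{coroll}, exactly as in the proof of Theorem \ref{th:nonconvessi} and using $R(x_\infty)\le\diam(\cK)=\diam(\Om)$, one obtains
$$
\dist(x_\infty,\pa\Om)\ \ge\ \frac{\om_{N-1}\,N^{N-1}}{\diam(\Om)^{N-1}\,\la_1(\Om)^N\,|\Om|}.
$$
Hence the whole statement reduces to the geometric eigenvalue estimate
\begin{equation}\label{planeig}
\la_1(\Om)\ \le\ \frac{\la_1(B_1)\,|\pa\Om|}{N\,|\Om|\,r_\Om}:
\end{equation}
once \eqref{planeig} is available, inserting it in the display above and recalling $\mathrm{IPR}(\Om)^{-N}=|\Om|^{N-1}|\pa\Om|^{-N}$ produces \eqref{preciseestimd2} after a one-line rearrangement. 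I stress that the elementary bound $\la_1(\Om)\le\la_1(B_{r_\Om})=\la_1(B_1)/r_\Om^2$ coming from domain monotonicity under $B_{r_\Om}\subset\Om$ is \emph{not} sufficient: \eqref{planeig} is strictly stronger (the two coincide only when $\Om$ is a ball, because $r_\Om|\pa\Om|\le N|\Om|$ with equality for balls), and it is precisely here that convexity must be exploited.

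To prove \eqref{planeig} I would use as a competitor for the Rayleigh quotient a radial profile composed with the gauge function of $\Om$. Let $p$ be a point with $B(p,r_\Om)\subset\Om$ (so $p$ is an incenter), let $j_p$ be the gauge function of $\Om$ centered at $p$ introduced in the Preliminaries, and let $\psi:[0,1]\to[0,\infty)$ be the radial profile of the first Dirichlet eigenfunction of $B_1$, so that $\psi(1)=0$ and $\int_0^1\psi'(s)^2s^{N-1}\,ds=\la_1(B_1)\int_0^1\psi(s)^2s^{N-1}\,ds$, this being the least value of the one-dimensional weighted Rayleigh quotient among profiles vanishing at $s=1$. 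Put $v(x)=\psi(j_p(x))$; this is Lipschitz and vanishes on $\pa\Om=\{j_p=1\}$, hence $v\in H^1_0(\Om)$. Since $j_p$ is positively $1$-homogeneous about $p$ one has $|\{j_p<s\}|=s^N|\Om|$, and since $|\na j_p|$ is $0$-homogeneous about $p$ one has $\int_{\{j_p<s\}}|\na j_p|^2\,dx=s^N\int_\Om|\na j_p|^2\,dx$; feeding these into the coarea formula gives
$$
\int_\Om|\na v|^2\,dx=N\Big(\int_\Om|\na j_p|^2\,dx\Big)\int_0^1\psi'(s)^2s^{N-1}\,ds,\qquad \int_\Om v^2\,dx=N\,|\Om|\int_0^1\psi(s)^2s^{N-1}\,ds,
$$
so that, by the choice of $\psi$,
$$
\la_1(\Om)\ \le\ \frac{\int_\Om|\na v|^2\,dx}{\int_\Om v^2\,dx}\ =\ \frac{\la_1(B_1)}{|\Om|}\int_\Om|\na j_p|^2\,dx.
$$

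It then suffices to show $\int_\Om|\na j_p|^2\,dx\le|\pa\Om|/(N\,r_\Om)$, and this is where the incenter is used. Applying the divergence theorem to the field $(x-p)\,|\na j_p(x)|^2$ and using that $|\na j_p|^2$ is $0$-homogeneous about $p$, whence $(x-p)\cdot\na|\na j_p|^2=0$ and $\dve\big((x-p)|\na j_p|^2\big)=N|\na j_p|^2$, one gets $N\int_\Om|\na j_p|^2\,dx=\int_{\pa\Om}\big((x-p)\cdot\nu\big)\,|\na j_p|^2\,d\cH^{N-1}$. On $\pa\Om=\{j_p=1\}$ the gradient $\na j_p$ points outward along the unit normal $\nu$, while Euler's identity for the $1$-homogeneous $j_p$ gives $\na j_p\cdot(x-p)=j_p=1$; together these force $|\na j_p|=\big((x-p)\cdot\nu\big)^{-1}$, hence $\big((x-p)\cdot\nu\big)|\na j_p|^2=\big((x-p)\cdot\nu\big)^{-1}\le 1/r_\Om$, because $(x-p)\cdot\nu$ is the distance from $p$ to the supporting hyperplane of $\Om$ at $x$ and $B(p,r_\Om)\subset\Om$. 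Integrating over $\pa\Om$ gives $N\int_\Om|\na j_p|^2\,dx\le|\pa\Om|/r_\Om$, hence \eqref{planeig}, hence \eqref{preciseestimd2}. (Since $j_p$ is only Lipschitz for a general convex body, the coarea and divergence computations should be justified either by first assuming $\pa\Om$ smooth, in which case $j_p\in C^\infty(\RN\setminus\{p\})$, and then approximating, or directly using $j_p\in W^{1,\infty}$.) Finally, to deduce \eqref{estimd2} from \eqref{preciseestimd2} I would bound $\mathrm{IPR}(\Om)$ by elementary geometry: from $N|\Om|=\int_{\pa\Om}\big((x-p)\cdot\nu\big)\,d\cH^{N-1}\ge r_\Om|\pa\Om|$ and the isodiametric inequality $|\Om|\le\om_N(\diam(\Om)/2)^N$ we get $\mathrm{IPR}(\Om)^{-N}=|\Om|^{N-1}|\pa\Om|^{-N}\ge r_\Om^N/(N^N|\Om|)\ge 2^N r_\Om^N/(N^N\om_N\diam(\Om)^N)$; plugging this into \eqref{preciseestimd2} gives $\dist(x_\infty,\pa\Om)\ge r_\Om\,\dfrac{2^N N^{N-1}\om_{N-1}}{\om_N\la_1(B_1)^N}\big(r_\Om/\diam(\Om)\big)^{2N-1}$, and using $\diam(\Om)\ge 2r_\Om$ to trade the exponent $2N-1$ for $N^2-1$ at the cost of a factor $2^{N(N-2)}$ produces \eqref{estimd2} with constant $(2^NN)^{N-1}\om_{N-1}/(\om_N\la_1(B_1)^N)$.

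The main obstacle is clearly \eqref{planeig}; the idea that makes it go through is to discard the first eigenfunction of the inscribed ball — whose Rayleigh quotient is only $\la_1(B_1)/r_\Om^2$ — in favour of the profile $\psi\circ j_p$, which dilates it so as to fill the whole of $\Om$ and converts the geometric gain into the quantity $\int_\Om|\na j_p|^2\,dx$, after which the divergence-theorem identity $N\int_\Om|\na j_p|^2\,dx=\int_{\pa\Om}\big((x-p)\cdot\nu\big)^{-1}\,d\cH^{N-1}$ together with the defining property of the incenter finishes the argument.
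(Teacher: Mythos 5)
Your proposal is correct and follows the same overall strategy as the paper: reduce \eqref{preciseestimd2} to the combination of Lemma \ref{lmestimK*}, Corollary \ref{cor:estimxinfty} and the eigenvalue estimate $\la_1(\Om)\le\frac{\la_1(B_1)}{N}\frac{|\pa\Om|}{r_\Om|\Om|}$ (your \eqref{planeig}, which is \eqref{estimlambdar} in the paper). The one substantive departure is that the paper simply cites \cite[Theorem 2]{FK} for this eigenvalue bound, whereas you give a self-contained proof via the test function $v=\psi\circ j_p$ together with the divergence-theorem identity $N\int_\Om|\na j_p|^2\,dx=\int_{\pa\Om}\big((x-p)\cdot\nu\big)^{-1}\,d\cH^{N-1}$; this is essentially the Freitas--Krej\v{c}i\v{r}\'{\i}k argument itself, and including it makes the theorem self-contained at the cost of length. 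Your route to \eqref{estimd2} is also slightly different: the paper substitutes $|\Om|\ge\om_Nr_\Om^N$ and $|\pa\Om|\le N\om_N(\diam(\Om)/2)^{N-1}$ directly into \eqref{preciseestimd2}, while you first derive the stronger intermediate bound with exponent $2N-1$ (which is exactly the bound appearing in the Remark after Theorem \ref{th:conv}) and then relax it using $\diam(\Om)\ge 2r_\Om$; both routes are correct and yield the stated constant $(2^NN)^{N-1}\om_{N-1}/(\om_N\la_1(B_1)^N)$.
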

\begin{proof}
The proof of \eqref{preciseestimd2} readily follows by combining 
\eqref{preciseestimd} to the following upper bound on $\la_1(\Om)$
\begin{equation}\label{estimlambdar}
\lambda_1(\Omega)\leq\frac{\la_1(B_1)}{N}\,\frac{|\pa\Om|}{r_\Om|\Om|}\,,
\end{equation}
proved in \cite[Theorem 2]{FK}. Using 
\eqref{preciseestimd2} and the
two inequalities
$$
|\Om|\ge \om_N\, r_\Om^N \quad \mbox{ and } \quad |\pa\Om|\le N\,\om_N\,\left[\frac{\diam(\Om)}{2}\right]^{N-1},
$$ 
we end up with \eqref{estimd2}. 
\end{proof}
\begin{oss}
Observe that using 
\eqref{estimd} and the inequality
\[
\lambda_1(\Omega)\le \frac{\lambda_1(B_1)}{r_\Omega^2},
\]
which follows from the monotonicity and scaling properties of $\lambda_1$, we can infer
\[
\dist(x_\infty,\pa\Om)\ge r_\Omega\left[\frac{2^N N^{N-1}}{\la_1(B_1)^N}\,\frac{\omega_{N-1}}{\omega_N}\,\left(\frac{r_\Omega}{\diam(\Om)}\right)^{2N-1}\right],
\] 
thus providing a lower bound which is strictly greater than \eqref{estimd2}, as long as the ratio $r_\Omega/\mathrm{diam}(\Omega)$ is strictly smaller than $1/2$ and $N\ge 3$. 
\end{oss}
Inequality \eqref{estimlambdar} is in fact a corollary of a sharper inequality holding for starshaped sets. 
We can then give a refinement
of \eqref{preciseestimd2} which holds in this larger class:
to this end, we borrow some notations from \cite{FK}. 
\par
A set $\Om$ is said to be {\it strictly starshaped} with respect to a point $x_0\in\Om$ if it is starshaped with respect to $x_0$ and if its support function centered at $x_0$, i.e.
\[
h_{\Om,x_0}(x)=\max_{y\in \Om}\, (y-x_0)\cdot x,
\]
is uniformly positive, that is $\inf_{x\in\pa\Om} h_{\Om,x_0}(x)>0$. 
Let $\Omega$ be a strictly starshaped set with locally Lipschitz boundary, as in \cite{FK} we define 
\begin{equation*}
\label{defF}
W(\Om)=\inf\left\{\int_{\pa\Om} \frac{1}{h_{\Om,x_0}(x)}\,d\si(x)\,:\,x_0\in\Om\right\}\,,
\end{equation*}
where $d\si$ denotes surface measure on $\pa\Om.$
According to this notation, \cite[Theorem 3]{FK} states:
\begin{equation}\label{estimlambdaF}
\la_1(\Om)\leq\frac{\lambda_1(B_1)}{N}\,\frac{W(\Omega)}{|\Omega|}\,.
\end{equation}
Arguing as in Theorem \ref{th:conv} and using \eqref{estimlambdaF} in place of
\eqref{estimlambdar} gives the following estimate.
\begin{teo}
\label{th:ambiguo}
Let $\Omega$ be a strictly starhaped set with locally Lipschitz boundary and 
denote by $\cK$ the closure of the convex hull of $\Omega$. Then
\begin{equation}\label{preciseestimstar}
\dist(x_\infty,\pa\cK)\geq \frac{N^{2N-1}\,\om_{N-1}}{\la_1(B_1)^N}\,
\left(\frac{|\Omega|}{\diam(\Om)\, W(\Om)}\right)^{1-N}\,\frac{1}{W(\Om)}\,.
\end{equation}
\end{teo}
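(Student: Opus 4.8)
The plan is to repeat, almost verbatim, the argument used for Theorem \ref{th:conv}, the only change being that the role of the upper bound \eqref{estimlambdar} for $\la_1(\Om)$ is now played by the sharper inequality \eqref{estimlambdaF}, which is available precisely because $\Om$ is assumed to be strictly starshaped with locally Lipschitz boundary.

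First I would observe that a strictly starshaped set with locally Lipschitz boundary is, in particular, a bounded open set, so that Theorem \ref{thmestimx0}, and hence Theorem \ref{th:nonconvessi}, applies to it; in particular, \eqref{preciseestimd} holds for the maximum point $x_\infty$ of $\phi_1$. It is convenient to rewrite its right-hand side in the equivalent form
\[
\dist(x_\infty,\pa\cK)\ge \frac{N^{N-1}\,\om_{N-1}}{|\Om|\,\diam(\Om)^{N-1}\,\la_1(\Om)^N}\,,
\]
which isolates the single occurrence of $\la_1(\Om)$: it sits in the denominator, raised to the positive power $N$.

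Next I would invoke \eqref{estimlambdaF}, i.e. \cite[Theorem 3]{FK}, which gives $\la_1(\Om)\le \frac{\la_1(B_1)}{N}\,\frac{W(\Om)}{|\Om|}$. Since $\la_1(\Om)$ occurs only in that denominator (with a positive exponent), this upper bound may be substituted there without reversing the inequality, producing a genuine lower bound for $\dist(x_\infty,\pa\cK)$ expressed solely in terms of $N$, $|\Om|$, $\diam(\Om)$, $W(\Om)$ and $\la_1(B_1)$. Collecting the numerical factors — the two powers $N^{N-1}$ and $N^N$ merging into $N^{2N-1}$, and one power of $|\Om|$ cancelling — and then rearranging the remaining dependence on $|\Om|$, $\diam(\Om)$ and $W(\Om)$ yields exactly \eqref{preciseestimstar}.

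I do not expect any genuine obstacle: the statement is essentially a corollary of Theorem \ref{th:nonconvessi} together with \cite[Theorem 3]{FK}, all the analytic content being already contained in those results (via Theorem \ref{thmestimx0}, Lemma \ref{lmestimK*}, and the Faber--Krahn type estimate for starshaped domains). The only points calling for attention are that the two hypotheses ``strictly starshaped'' and ``locally Lipschitz boundary'' are exactly those under which \eqref{estimlambdaF} is valid, and the elementary bookkeeping of the exponents; neither is problematic.
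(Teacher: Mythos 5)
Your proposal matches the paper's proof exactly: the paper simply says ``Arguing as in Theorem \ref{th:conv} and using \eqref{estimlambdaF} in place of \eqref{estimlambdar}'', which is precisely the substitution you carry out. Your bookkeeping is also correct, but it exposes a typo in the paper's statement: carrying your computation to the end gives
\[
\dist(x_\infty,\pa\cK)\ge \frac{N^{2N-1}\,\om_{N-1}}{\la_1(B_1)^N}\cdot\frac{|\Omega|^{N-1}}{\diam(\Omega)^{N-1}\,W(\Omega)^N}
=\frac{N^{2N-1}\,\om_{N-1}}{\la_1(B_1)^N}\,\left(\frac{|\Omega|}{\diam(\Om)\, W(\Om)}\right)^{N-1}\,\frac{1}{W(\Om)},
\]
so the exponent in \eqref{preciseestimstar} should read $N-1$, not $1-N$ (you can also sanity-check this against \eqref{preciseestimd2}, where the corresponding power of $r_\Omega/\diam(\Omega)$ is correctly $N-1$). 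Your closing sentence that the rearrangement ``yields exactly \eqref{preciseestimstar}'' is therefore slightly too trusting of the printed formula; otherwise the argument is complete and correct.
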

\begin{oss}
We remark that \eqref{preciseestimstar} is sharper and more general
than \eqref{preciseestimd2}, and it is at the same time more explicit than \eqref{preciseestimd}, in the sense that,
differently from $\la_1(\Om),$  the number $W(\Om)$  
can be computed directly from the support function (which exactly determines a convex set). 
\end{oss}

\begin{oss}\label{santalo}
It is worth noticing that the Santal\`{o} point $s_{\cK}$ of $\cK$ always satisfies
\eqref{coroll} (as well as \eqref{eqthm1} for every $t>0$), then it satisfies all the estimates we proved for $x_\infty$ in this section. In particular, Theorem \ref{th:conv} (or Theorem \ref{th:ambiguo}) can be used as well to  
estimate the location of the Santal\`o point of a convex set.
\end{oss}

\setcounter{teo}{0}
\setcounter{equation}{0}

\section{Alexandrov's reflection principle}
\label{sec:alexandrov}

In this section, for the reader's convenience, we recall some relevant facts
about {\it Aleksandrov's symmetry principle,} which has been extensively used
in many situations and with various generalizations (see \cite{Fr} for a good reference).

For $\om\in\SSN,$ let $\pi(\la,\om),$ $\pi^+(\la,\om),$ and $\pi^-(\la,\om)$
be the sets defined in \eqref{hyperplane} and \eqref{halfspaces}.
Also, define a linear transformation $A_\om:\RN\to\RN$
by the matrix:
\[
\cA_\omega=(\delta_{ij}-2\omega_i\omega_j)_{i,j=1,\dots,N}
\]
where $\delta_{ij}$ is the Kronecker symbol and the $\omega_i$ are the components of $\omega$.
Then the application $\cT_{\la,\om}:\RN\to\RN$ defined by
\[
\cT_{\la,\om}(x)=\cA_\om\, x+2\la\om,\ x\in\RE^N,
\]
represents the reflection with respect to $\pi(\la,\om).$
As already mentioned, if $\Om$ is a subset of $\RN,$  we set $\Om^+_{\la,\om}=\Om\cap\pi^+(\la,\om).$

\begin{prop}
\label{th:nohotspot}
Let $\Om$ be a bounded domain in $\RN$ with Lipschitz continuous boundary $\pa\Om$ and
suppose the hyperplane $\pi(\la,\om)$ defined by
 \eqref{hyperplane} has non-empty
intersection with $\Om.$ Assume that $\cT_{\la,\om}(\Om^+_{\la,\om})\subset\Om.$
\par
If $\Om$ is not symmetric with respect to $\pi(\la,\om)$, then $\pi(\la,\om)$ does not contain any (spatial) critical point of the
solution $u$ of \eqref{heatconductor}.
\end{prop}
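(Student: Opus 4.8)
The plan is to apply Aleksandrov's reflection technique to the solution $u$ of \eqref{heatconductor} across the hyperplane $\pi(\la,\om)$, and to conclude by Hopf's parabolic boundary point lemma. After a rigid motion I may assume $\om=e_1$ and $\la=0$, so that $\pi:=\pi(0,e_1)=\{x_1=0\}$, $\cT:=\cT_{0,e_1}$ is the reflection $x\mapsto(-x_1,x_2,\dots,x_N)$, and $\Om^+:=\Om^+_{0,e_1}=\Om\cap\{x_1>0\}$. Set $\Om'=\cT(\Om^+)$; by hypothesis $\Om'\subset\Om$, and $\Om'\subset\{x_1<0\}$. On $\overline{\Om'}\times(0,\infty)$ define $v(x,t)=u(\cT x,t)$: since $\cT$ is orthogonal and the heat operator commutes with it, $v$ solves the heat equation in $\Om'\times(0,\infty)$, has initial value $1$ in $\Om'$, and is continuous up to $\pa\Om'$ for $t>0$ (using that $u$ is continuous on $\overline\Om\times(0,\infty)$ and vanishes on $\pa\Om$, which is Lipschitz).

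Next I would show $w:=v-u\le 0$ in $\Om'\times(0,\infty)$. Indeed $w$ solves the heat equation there; its initial value in $\Om'$ is $1-1=0$; on $\pa\Om'\cap\pi$ one has $\cT x=x$, so $w=0$; and on $\pa\Om'\setminus\pi$ one has $\cT x\in\pa\Om$, so $v=0$ and $w=-u\le 0$ (recall $0\le u\le1$). The weak maximum principle for the heat equation then gives $w\le0$, the only delicate point being the parabolic corner $\pa\Om'\times\{0\}$, where the initial datum jumps; this is handled in the standard way (cf. \cite{Fr}), $w$ being bounded with $\limsup_{s\to0^+}w(\cdot,s)\le0$ on $\overline{\Om'}$ — here the inclusion $\Om'\subset\Om$, which forces $\dist(\cdot,\pa\Om)\ge\dist(\cdot,\pa\Om')$ on $\Om'$, prevents $w$ from being positive near the corner. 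In particular $w(x,t)=0$ for every $x\in\pi\cap\Om$, since such points are fixed by $\cT$.

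Now suppose, for contradiction, that for some $t_0>0$ the hyperplane $\pi$ contains a spatial critical point $x_0$ of $u(\cdot,t_0)$; then $x_0\in\pi\cap\Om$ and $\na u(x_0,t_0)=0$. Since $\Om$ is open, a small half-ball $B(x_0,\ep)\cap\{x_1<0\}$ lies in $\Om'$, so $x_0$ is a boundary point of the connected component $C'$ of $\Om'$ containing that half-ball, it satisfies an interior ball condition, and $w(x_0,t_0)=0=\max_{\overline{C'}}w(\cdot,t_0)$. By the parabolic Hopf boundary point lemma applied to $w$ on $C'\times(0,t_0]$, either $\pa_{x_1}w(x_0,t_0)>0$ (the outer normal to $C'$ at $x_0$ being $e_1$), or $w\equiv0$ in $C'\times(0,t_0]$. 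In the first case, since $v(x,t)=u(\cT x,t)$ gives $\pa_{x_1}v(x_0,t_0)=-\pa_{x_1}u(x_0,t_0)$ (as $\cT x_0=x_0$), we get $\pa_{x_1}w(x_0,t_0)=-2\,\pa_{x_1}u(x_0,t_0)>0$, hence $\pa_{x_1}u(x_0,t_0)\neq0$, contradicting $\na u(x_0,t_0)=0$.

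It remains to exclude the case $w\equiv0$ in $C'\times(0,t_0]$, i.e. $u(x,t)=u(\cT x,t)$ for $x\in C'$, $t\in(0,t_0]$; I claim this forces $\Om$ to be symmetric with respect to $\pi$, against the hypothesis. Set $C=\cT C'\subset\Om^+$ and let $P$ be the set of $p\in\pi\cap\Om$ with $B(p,\de)\cap\{x_1>0\}\subset C$ for some $\de>0$; then $\widetilde\Om:=C\cup P\cup C'$ is a non-empty (it contains $x_0$), open, connected subset of $\Om$, symmetric with respect to $\pi$, on which $u(\cdot,t)=u(\cT\,\cdot\,,t)$ for $t\in(0,t_0]$. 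If $\widetilde\Om\neq\Om$, connectedness of $\Om$ gives a point $p\in\Om\cap\pa\widetilde\Om$; a short case analysis on $p_1$ shows that $p_1=0$ and $p_1>0$ both force $p\in\widetilde\Om$ (using that a small ball or half-ball around an interior point lies in a single component of $\Om$, $\Om\cap\{x_1>0\}$, or $\Om\cap\{x_1<0\}$), while for $p_1<0$ one has $p=\cT z$ with $z\in\pa\Om\cap\pa C$, and letting $C'\ni y\to p$ in $u(y,t)=u(\cT y,t)$ yields $u(p,t)=\lim u(\cT y,t)=u(z,t)=0$, contradicting $u>0$ in $\Om\times(0,\infty)$ by the strong maximum principle. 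Hence $\widetilde\Om=\Om$ is symmetric with respect to $\pi$ — a contradiction. Therefore no such $x_0$ exists and $\pi(\la,\om)$ contains no spatial critical point of $u$. The main obstacle is exactly this last step: in Hopf's lemma the alternative $w\equiv0$ must be ruled out, which requires both the strong maximum principle for $u$ and the (mildly technical) topological identification of the symmetric set $\widetilde\Om$ with $\Om$; the parabolic-corner issue in the weak maximum principle is a lesser, routine point.
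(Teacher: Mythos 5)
Your argument is correct and uses the same core mechanism as the paper: compare $u$ with its reflection across $\pi(\la,\om)$, apply the maximum principle to the difference, and conclude via Hopf's parabolic boundary point lemma, with $\pa_\om v = -2\,\pa_\om u$ on $\pi$ giving the desired nonvanishing of the gradient. Working on $\Om'=\cT(\Om^+)$ rather than on $\Om^+$ itself is an immaterial change of side (the paper's $v$ on $\Om^+$ is, up to sign and reflection, your $w$ on $\Om'$).

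Where you diverge is in how the hypothesis ``$\Om$ is not symmetric with respect to $\pi$'' is consumed. The paper shows $v\ge 0$ on the parabolic boundary of $\Om^+_{\la,\om}$ and then asserts $v>0$ in the interior by the strong maximum principle, without spelling out why $v\not\equiv 0$ (and, in the non-convex case, why it is nonzero on the relevant component); only after that is Hopf's lemma applied with no alternative. You instead establish only the weak inequality $w\le 0$ first, so Hopf's lemma at a putative critical point $x_0\in\pi\cap\Om$ leaves the dichotomy ``strict normal derivative'' or ``$w\equiv 0$ on the component'', and you close the second horn with a careful topological argument: the set $\widetilde\Om=C\cup P\cup C'$ is open, connected, $\cT$-symmetric, and the boundary-point analysis (using $u(\cdot,t)>0$ in $\Om$ and $u(\cdot,t)=0$ on $\pa\Om$ for $t>0$) forces $\widetilde\Om=\Om$, contradicting asymmetry. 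This is more work than the paper's one-line appeal to the strong maximum principle, but it has the virtue of making the role of the asymmetry hypothesis completely explicit, and it is robust when $\Om^+_{\la,\om}$ is disconnected. Your remark on the parabolic corner (the jump in the initial/boundary data of problem \eqref{heatconductor}) is a real technical point, glossed over both in the paper and, ultimately, in your sketch; the clean way to dispose of it is by monotone approximation of the initial datum by compatible smooth data, but noting that $\dist(\cT x,\pa\Om)\le\dist(x,\pa\Om)$ on $\Om'$ is a sound heuristic for why the inequality survives the corner.
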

\begin{proof}
For $x\in\Om^+_{\la,\om}$ and $t>0$ the function
$$
v(x,t)=u(\cT_{\la,\om}\, x,t)-u(x,t)
$$
is well-defined and is such that
\begin{equation}
\label{propertyv}
\begin{array}{ll}
v_t=\Delta v\quad\mbox{ in }\ &\Om^+_{\la,\om}\times (0,\infty),\\
v=0 \quad\quad\mbox{ on } &\Om^+_{\la,\om}\times\{ 0\},\\
v\ge 0 \quad\quad\mbox{ on } &\pa\Om^+_{\la,\om}\times
(0,\infty).
\end{array}
\end{equation}
Hence $v>0$ in $\Om^+_{\la,\om}\times (0,\infty),$ by the strong maximum principle for parabolic operators (see \cite{PW}).
Since $v=0$ on $(\pa\Om^+_{\la,\om}\cap\pi(\la,\om))\times(0,\infty),$
we obtain that $\frac{\pa v}{\pa\om}>0$ on it ($\om$ is in fact the interior normal unit vector),
by Hopf's boundary lemma for parabolic operators. We conclude
by noticing that $\ds\frac{\pa v}{\pa\om}=-2\,\frac{\pa u}{\pa\om}$ on $(\pa\Om^+_{\la,\om}\cap\pi(\la,\om))\times(0,\infty).$
\end{proof}
\par
With the same arguments and a little more work, one can extend this result 
to more general situations, involving nonlinearities both
for elliptic and parabolic operators. As an example, 
here we present the following result.
\begin{prop}
\label{th:nocritical}
Let $\Om$ and $\Om^+_{\la,\om}$ satisfy the same assumptions as those of Proposition
\ref{th:nohotspot}; in particular suppose that $\cT_{\la,\om}(\Om^+_{\la,\om})\subset\Om.$
\par
Let $u=u(x)$ be a solution of class $C^1(\ovr{\Om})\cap C^2(\Om)$ of the system:
\begin{equation}
\label{elliptic}
\begin{array}{c}
\De u+f(u)=0 \ \mbox{ and } \ u>0 \ \mbox{ in }\ \Om,\\
u=0 \quad\quad\mbox{ on } \pa\Om,
\end{array}
\end{equation}
where $f$ is a locally Lipschitz continuous function.
\par
If $\Om$ is not symmetric with respect to $\pi(\la,\om)$, then $\pi(\la,\om)$ does not contain any critical point of $u.$
\end{prop}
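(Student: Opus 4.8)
The plan is to mimic the proof of Proposition~\ref{th:nohotspot}, the one genuinely new ingredient being a standard linearization of the nonlinearity $f$. Write $\cT=\cT_{\la,\om}$ and, for $x\in\ovr{\Om^+_{\la,\om}}$, set
\[
v(x)=u(\cT x)-u(x).
\]
Since $\cT$ is an affine isometry with $\cT(\Om^+_{\la,\om})\subset\Om$ (hence also $\cT(\ovr{\Om^+_{\la,\om}})\subset\ovr\Om$), $v$ is well defined and $v\in C^1(\ovr{\Om^+_{\la,\om}})\cap C^2(\Om^+_{\la,\om})$; moreover $\De(u\circ\cT)=(\De u)\circ\cT$, so $u\circ\cT$ solves $\De(u\circ\cT)+f(u\circ\cT)=0$ in $\Om^+_{\la,\om}$, and subtracting the equation for $u$ gives $\De v=f(u)-f(u\circ\cT)$ there. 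As $u$ is bounded and $f$ is locally Lipschitz, the function
\[
c(x)=
\begin{cases}
\dfrac{f(u(x))-f(u(\cT x))}{u(x)-u(\cT x)}, & u(x)\ne u(\cT x),\\[1.5mm]
0, & u(x)=u(\cT x),
\end{cases}
\]
is bounded on $\Om^+_{\la,\om}$, and there $v$ solves the linear equation $\De v+c(x)\,v=0$. Finally $\pa\Om^+_{\la,\om}$ splits into the flat part $\Ga_0=\pa\Om^+_{\la,\om}\cap\pi(\la,\om)$, where $\cT x=x$ and so $v=0$, and the part $\Ga_1=\pa\Om^+_{\la,\om}\cap\pa\Om$, where $u(x)=0$ and $u(\cT x)\ge 0$ (because $\cT x\in\ovr\Om$ and $u\ge 0$ on $\ovr\Om$) so that $v\ge 0$; hence $v\ge 0$ on $\pa\Om^+_{\la,\om}$.

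The crucial step is to conclude that $v\ge 0$ throughout $\Om^+_{\la,\om}$. This is not immediate, because the zeroth-order coefficient $c$ need not have a sign --- precisely the well-known obstruction of the method of moving planes --- and it is handled by the standard devices (see \cite{Fr,GNN}): e.g. one lets the hyperplane slide from a position beyond $\Om$, where the corresponding cap is empty, towards $\pi(\la,\om)$, and combines the maximum principle in caps of small measure (which holds whatever the sign of $c$) with a continuity argument; this is particularly transparent when $\Om$ is convex, for then $\cT_{\la',\om}(\Om^+_{\la',\om})\subset\Om$ for every $\la'$ lying between that position and $\la$. Granted $v\ge 0$, we write $c=c^+-c^-$, so that $\De v-c^-\,v=-c^+v\le 0$ in $\Om^+_{\la,\om}$ --- a nonpositive zeroth-order coefficient being now admissible --- and, since $\Om$ is not symmetric with respect to $\pi(\la,\om)$ (so that $v\not\equiv 0$), the strong maximum principle yields $v>0$ in $\Om^+_{\la,\om}$.

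It remains to argue exactly as at the end of the proof of Proposition~\ref{th:nohotspot}. Let $x_0\in\pi(\la,\om)\cap\Om\subset\Ga_0$; then $\Om^+_{\la,\om}$ satisfies the interior ball condition at $x_0$ (its boundary being flat there), $v(x_0)=0$, and $v>0$ inside, so Hopf's boundary point lemma --- which applies because $v\ge 0$ --- gives $\pa v/\pa\om>0$ at $x_0$, $\om$ being the interior unit normal to $\pa\Om^+_{\la,\om}$ along $\Ga_0$. Since $\cT$ fixes $\pi(\la,\om)$ and reverses the $\om$-direction, on $\Ga_0$ one has $\pa v/\pa\om=-2\,\pa u/\pa\om$, whence $\pa u/\pa\om(x_0)<0$ and in particular $\na u(x_0)\ne 0$. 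As $x_0$ was an arbitrary point of $\pi(\la,\om)\cap\Om$, no critical point of $u$ lies on $\pi(\la,\om)$. The only real difficulty is the positivity of $v$ discussed above; everything else is routine.
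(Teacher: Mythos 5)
Your proof is correct and follows essentially the same route as the paper's: define $v=u\circ\cT_{\la,\om}-u$, linearize $f$ via the bounded difference quotient $c$, invoke the moving-plane/Fraenkel argument for $v\ge 0$, pass to the nonpositive coefficient $-c^-$ to apply the strong maximum principle and Hopf's lemma, and identify $\pa v/\pa\om=-2\,\pa u/\pa\om$ on the flat boundary. The paper is equally terse about the "$v\ge 0$'' step, citing \cite{Fr} just as you do; your extra parenthetical on the sliding-plane/small-measure argument and its simplification in the convex case is a useful gloss rather than a deviation.
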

\begin{proof}
The proof runs similarly to that of Proposition \ref{th:nohotspot}; the relevant changes follow.
The function
$$
v(x)=u(\cT_{\la,\om}\, x)-u(x),
$$
defined for $x\in\Om_{\la,\om},$ satisfies the conditions:
\begin{equation*}
\label{propertyvell}
\begin{array}{lll}
&\De v+c(x)\, v=0 &\mbox{ in }\ \Om^+_{\la,\om},\\
&v\ge 0 &\mbox{ on } \pa\Om^+_{\la,\om},
\end{array}
\end{equation*}
where the function $c(x),$ defined by
$$
c(x)=\left\{\begin{array}{ll}
\ds\frac{f(u(\cT_{\la,\om}\, x))-f(u(x))}{u(\cT_{\la,\om}\, x)-u(x)} &\mbox{ for } u(\cT_{\la,\om}\, x)\not=u(x),\\
\ds 0 &\mbox{ for } u(\cT_{\la,\om}\, x)=u(x),
\end{array}\right.
$$
is bounded by the Lipschitz constant of $f$ in the interval $[0,\max_{\ovr{\Om}}u].$
Hence $v\ge 0$ in $\Om^+_{\la,\om},$ by the arguments used in \cite{Fr}.
Let $c^-(x)=\max(-c(x),0);$ then
\begin{equation*}
\De v-c^-(x)\, v\le 0 \ \mbox{ and } \ v\ge 0 \ \mbox{ in }\ \Om^+_{\la,\om}
\end{equation*}
and the strong maximum principle can be applied to obtain that $v>0$ in $\Om^+_{\la,\om}.$
The conclusion then follows as before
by Hopf's boundary lemma.
\end{proof}
\par
An immediate consequence of this theorem is the following result.
\begin{coro}
\label{coro:nohotspot}
Let $\Om$ and $\Om^+_{\la,\om}$ satisfy the same assumptions as those of Proposition
\ref{th:nohotspot}.
Let $u_1$ be the first (positive) eigenfunction of $-\De$ with homogeneous Dirichlet
boundary conditions.
\par
If $\Om$ is not symmetric with respect to $\pi(\la,\om)$, then $\pi(\la,\om)$ does not contain any critical point of $u_1.$
\end{coro}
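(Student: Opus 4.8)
The plan is to recognize the corollary as nothing but the special case of Proposition~\ref{th:nocritical} in which the nonlinearity is linear. By definition, the first Dirichlet eigenfunction $u_1$ satisfies $-\De u_1=\la_1(\Om)\,u_1$ in $\Om$ together with $u_1=0$ on $\pa\Om$, and (after normalizing the sign) $u_1>0$ in $\Om$. Setting $f(s)=\la_1(\Om)\,s$, this is exactly system~\eqref{elliptic}, and $f$ is manifestly locally Lipschitz continuous, with Lipschitz constant $\la_1(\Om)$ on every bounded interval. Since $\Om$ and $\Om^+_{\la,\om}$ inherit all the standing hypotheses (bounded Lipschitz domain, $\pi(\la,\om)$ meeting $\Om$, and $\cT_{\la,\om}(\Om^+_{\la,\om})\subset\Om$), Proposition~\ref{th:nocritical} applies verbatim and yields the conclusion: if $\Om$ is not symmetric with respect to $\pi(\la,\om)$, then $\pi(\la,\om)$ contains no critical point of $u_1$.

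First I would record the regularity $u_1\in C^1(\ovr\Om)\cap C^2(\Om)$ demanded by Proposition~\ref{th:nocritical}. Interior $C^2$ regularity (in fact analyticity) of $u_1$ follows from standard elliptic regularity applied to $-\De u_1=\la_1 u_1$. The $C^1$-regularity up to $\pa\Om$ is the only slightly delicate point under the standing assumption that $\pa\Om$ is merely Lipschitz; here one may either restrict attention to the case where global $W^{2,p}$/Schauder estimates apply, or, more economically, observe that the reflection argument in the proof of Proposition~\ref{th:nocritical} really only uses the interior smoothness of the comparison function $v(x)=u_1(\cT_{\la,\om}\,x)-u_1(x)$ in $\Om^+_{\la,\om}$ together with Hopf's lemma on the flat portion $\pa\Om^+_{\la,\om}\cap\pi(\la,\om)$, while the needed sign information $v\ge 0$ on $\pa\Om^+_{\la,\om}$ comes solely from the boundary condition $u_1=0$ on $\pa\Om$ and the inclusion $\cT_{\la,\om}(\Om^+_{\la,\om})\subset\Om$.

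Then I would simply invoke Proposition~\ref{th:nocritical}. The deduction is essentially one line, and the only (minor) obstacle is the boundary-regularity bookkeeping just described. I note in passing that one could instead derive the corollary from Proposition~\ref{th:nohotspot} together with the large-time asymptotics~\eqref{large}, since $e^{\la_1 t}u(\cdot,t)\to\phi_1$ in $C^2_{\mathrm{loc}}(\Om)$; however, passing the absence of critical points to the limit would require a non-degeneracy argument for the critical point of $u(\cdot,t)$, so this indirect route is less clean than the direct reduction to Proposition~\ref{th:nocritical}.
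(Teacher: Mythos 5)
Your proposal is correct and matches the paper's intended argument exactly: the paper states the corollary as ``an immediate consequence'' of Proposition \ref{th:nocritical}, and taking $f(s)=\la_1(\Om)\,s$ in system \eqref{elliptic} is precisely the intended specialization. Your additional remarks on the $C^1(\ovr\Om)$ regularity requirement and the alternative route via Proposition \ref{th:nohotspot} and large-time asymptotics are thoughtful but not part of the paper's (essentially unwritten) proof.
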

\setcounter{teo}{0}
\setcounter{equation}{0}

\section{The heart of a convex body}
\label{sec:folding}

In what follows, we shall assume that $\cK\subset\RE^N$ is a convex body, that is a compact convex set with non-empty interior.
Occasionally, we will suppose that $\cK\subset\RE^N$ is of class $C^1,$ i.e. a set whose boundary $\pa\cK$ is an $(N-1)$-dimensional submanifold
of $\RE^N$ of class $C^1.$

\subsection{The maximal folding function}

We are interested in determining the function given by
\begin{equation}
\label{lambda}
\cR_\cK(\om):=\min\{\la\in\RE\ :\ \cT_{\la,\om}(\cK^+_{\la,\om})\subseteq\cK\},\ \ \om\in \SSN,
\end{equation}
which will be called the {\it maximal folding function} of $\cK;$
$\cR_\cK$  defines in turn a subset of $\cK$ -- the {\it heart} of $\cK$ -- as
$$
\core(\cK)=\{x\in\cK\ :\ x\cdot\omega\le\cR_\cK(\omega), \mbox{ for every } \omega\in \SSN\}.
$$
Of course, $\core(\cK)$ is a closed convex subset of $\cK.$
Observe that $\cR_\cK$ can be bounded below and above by 
means of the support functions of $\core(\cK)$ and $\cK:$
\begin{equation}
\label{supp_core}
h_{\core(\cK)}(\om)\le\cR_\cK(\om)\le h_\cK,\ \om\in \SSN\,.
\end{equation}
\par
The following results motivate our interest on $\core(\cK)$ and $\cR_\cK.$
\begin{prop}
\label{prop:heart}
Let $\cK$ be a convex body. 
\begin{itemize}
\item[(i)] The hot spot $x(t)$ of $\cK,$ the point $x_\infty$ and any limit
point of $x(t)$ as $t\to 0^+$ always belong to $\core(\cK);$ 
moreover, $x(t)$ and $x_\infty$ must fall in the interior of $\core(\cK),$
whenever this is non-empty.
\item[(ii)] The center of mass of $\cK,$
$$
\ovr{x}_\cK=\frac1{|\cK|}\int_\cK y\, dy\,,
$$
always belongs to the heart $\core(\cK)$ of $\cK.$ 
\item[(iii)]
If $\cK$ is strictly convex, the incenter $x_\cK^I$ of $\cK$ belongs to $\core(\cK).$  
\item[(iv)]
Let $\ovr{x}_\cK=0.$ If there exist $\ell$ $(1\le\ell\le N)$ independent directions $\om_1,\dots,\om_\ell$ such that
$\cR_\cK(\om_j)=0,$ $j=1,\dots, \ell,$ then
$$
\core(\cK)\subset\cK\cap\bigcap_{j=1}^\ell\pi(0,\om_j).
$$
In particular, if $\ell=N,$ then $\core(\cK)$ reduces to $\ovr{x}_\cK$ and the hot spot of $\cK$ is stationary.
\item[(v)] Let
\begin{equation}
\label{R_K}
\mathfrak{r}_\cK=\max_{\theta\in\SSN}\left\{\min_{\om\cdot\theta>0 }\frac{\cR_\cK(\om)-\ovr{x}_\cK\cdot\om}{\theta\cdot\om}\right\},
\end{equation}
then
$$
\core(\cK)\subseteq B(\ovr{x}_\cK, \mathfrak{r}_\cK).
$$
\end{itemize}
\end{prop}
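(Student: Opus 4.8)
The plan is to deduce part~(i) from the reflection principle of Section~\ref{sec:alexandrov}, and to read (ii)--(v) off the defining condition $\cT_{\la,\om}(\cK^+_{\la,\om})\subseteq\cK$ by elementary geometry. Two facts will be used repeatedly. First, for each $\om$ the set of admissible levels $\{\la:\cT_{\la,\om}(\cK^+_{\la,\om})\subseteq\cK\}$ is the closed interval $[\cR_\cK(\om),h_\cK(\om)]$: if it holds at $\la_0$ and $\la>\la_0$, each point $\cT_{\la,\om}(x)$, $x\in\cK^+_{\la,\om}$, lies on the segment (parallel to $\om$) joining $x\in\cK$ to $\cT_{\la_0,\om}(x)\in\cK$, hence belongs to $\cK$. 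Second, $\cK$ has a mirror symmetry with normal $\om$ for at most one value of $\la$, and that value is $\cR_\cK(\om)$ (a lower value would allow folding below the symmetry hyperplane, and two such values would make $\cK$ invariant under a nonzero translation). We also use that $\cR_\cK$ is lower semicontinuous -- by stability of the folding condition under limits of directions -- and that $\core(\cK)$ inherits every mirror symmetry of $\cK$, since such a symmetry permutes the half-spaces $\pi^-(\cR_\cK(\om),\om)$ (because $\cR_\cK$ is invariant under it). Throughout, $\cK^-_{\la,\om}=\cK\cap\pi^-(\la,\om)$.

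For (i), fix $\om$ and $\la>\cR_\cK(\om)$. By the second fact $\cK$ is not symmetric about $\pi(\la,\om)$, so the comparison argument in the proof of Proposition~\ref{th:nohotspot} makes $v(x,t)=u(\cT_{\la,\om}x,t)-u(x,t)$ strictly positive on $\Om^+_{\la,\om}\times(0,\infty)$; as $\cT_{\la,\om}$ maps the open cap into $\cK=\ovr\Om$, the maximum $M(t)$ is not attained in $\Om\cap\pi^+(\la,\om)$, i.e. $x(t)\cdot\om\le\la$. Letting $\la\downarrow\cR_\cK(\om)$ and varying $\om$ gives $x(t)\in\core(\cK)$; the same argument with Corollary~\ref{coro:nohotspot} (whose proof, via Proposition~\ref{th:nocritical}, again makes the analogous comparison function strictly positive in the cap) gives $x_\infty\in\core(\cK)$, and any $t\to0^+$ limit of $x(t)$ lies in $\core(\cK)$ because $\core(\cK)$ is closed. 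For the interior claim, assume $\core(\cK)$ has nonempty interior and $x(t)\in\pa\core(\cK)$. By compactness of $\SSN$ and lower semicontinuity of $\cR_\cK$, the inequalities $x(t)\cdot\om\le\cR_\cK(\om)$ cannot all be strict, so $x(t)\cdot\om_0=\cR_\cK(\om_0)$ for some $\om_0$, and $\pi(\cR_\cK(\om_0),\om_0)$ supports $\core(\cK)$ at $x(t)$. If $\cK$ is not symmetric about this hyperplane, Proposition~\ref{th:nohotspot} forbids the critical point $x(t)$ from lying on it; if it is symmetric, then $\core(\cK)$ is symmetric about one of its own supporting hyperplanes and therefore lies in it, contradicting nonempty interior. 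Either way we reach a contradiction, so $x(t)$ is an interior point; the argument for $x_\infty$ is identical.

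For (ii) it suffices to prove $\ovr x_\cK\cdot\om\le\cR_\cK(\om)$, i.e. $\int_\cK(x\cdot\om-\la)\,dx\le0$ for every $\la\ge\cR_\cK(\om)$. Since $\cT_{\la,\om}(\cK^+_{\la,\om})\subseteq\cK^-_{\la,\om}$, the change of variables $y=\cT_{\la,\om}(x)$ (Jacobian $1$, $x\cdot\om-\la\mapsto-(x\cdot\om-\la)$) yields $\int_{\cT_{\la,\om}(\cK^+_{\la,\om})}(x\cdot\om-\la)\,dx=-\int_{\cK^+_{\la,\om}}(x\cdot\om-\la)\,dx$, hence $\int_\cK(x\cdot\om-\la)\,dx=\int_{\cK^-_{\la,\om}\setminus\cT_{\la,\om}(\cK^+_{\la,\om})}(x\cdot\om-\la)\,dx\le0$, the last inequality because $x\cdot\om\le\la$ on $\cK^-_{\la,\om}$. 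For (iii), with $\cK$ strictly convex the incenter $x_\cK^I$ is the unique centre of a largest inscribed ball. Suppose $x_\cK^I\cdot\om>\la$ for some admissible $(\la,\om)$ and put $B=B(x_\cK^I,r_\cK)\subseteq\cK$. Then $\cT_{\la,\om}$ sends $B\cap\pi^+(\la,\om)$ into $\cT_{\la,\om}(\cK^+_{\la,\om})\subseteq\cK$ and sends $B\cap\pi^-(\la,\om)$ back into $B$, by the identity $|\cT_{\la,\om}(w)-c|^2=|w-c|^2+4(\la-w\cdot\om)(\la-c\cdot\om)$ with $c=x_\cK^I$, $w\cdot\om\le\la<c\cdot\om$ (the correction is $\le0$). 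Thus $\cT_{\la,\om}(B)=B(\cT_{\la,\om}(x_\cK^I),r_\cK)\subseteq\cK$ is a second largest inscribed ball with a different centre -- impossible; so $x_\cK^I\in\core(\cK)$.

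For (v), translate so that $\ovr x_\cK=0$: given $x\in\core(\cK)\setminus\{0\}$ and $\theta=x/|x|$, for every $\om$ with $\om\cdot\theta>0$ the membership $x\in\core(\cK)$ gives $|x|(\theta\cdot\om)=x\cdot\om\le\cR_\cK(\om)$, hence $|x|\le\min_{\om\cdot\theta>0}\cR_\cK(\om)/(\theta\cdot\om)\le\mathfrak{r}_\cK$ (and $\ovr x_\cK\in B(\ovr x_\cK,\mathfrak{r}_\cK)$ trivially, since $\mathfrak{r}_\cK\ge0$). For (iv), again with $\ovr x_\cK=0$: when $\cR_\cK(\om_j)=0$ the identity from the proof of (ii) with $\la=0$, $\om=\om_j$ has left-hand side $\int_\cK x\cdot\om_j\,dx=|\cK|\,\ovr x_\cK\cdot\om_j=0$, so $\int_{\cK^-_{0,\om_j}\setminus\cT_{0,\om_j}(\cK^+_{0,\om_j})}x\cdot\om_j\,dx=0$; since $x\cdot\om_j<0$ a.e. on that set, it is Lebesgue-null, whence $\cK$ is symmetric about $\pi(0,\om_j)$. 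That symmetry forces $\cR_\cK(-\om_j)=0$ too, so $\core(\cK)\subseteq\pi^-(0,\om_j)\cap\pi^-(0,-\om_j)=\pi(0,\om_j)$ for each $j$, i.e. $\core(\cK)\subseteq\cK\cap\bigcap_{j=1}^\ell\pi(0,\om_j)$; when $\ell=N$ this is $\{0\}=\{\ovr x_\cK\}$, and then $x(t)\in\core(\cK)=\{\ovr x_\cK\}$ for all $t$ by (i), so the hot spot is stationary. The main obstacles are the rigidity steps, not the soft inclusions: in (i), excluding that the hot spot reaches $\pa\core(\cK)$ rests on the uniqueness of the spatial critical point of $u$ (Brascamp--Lieb concavity of $\log u$ together with analyticity) and on enough regularity of $\cR_\cK$ -- lower semicontinuity, itself needing a limiting argument for the folding operation; in (iv), one must verify carefully that the equality case of the centroid estimate of (ii) genuinely forces a mirror symmetry of $\cK$; and in (iii), the argument is conclusive only because the incenter of a strictly convex body is unique, a standard but not entirely trivial fact. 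Everything else is routine bookkeeping with the folding condition.
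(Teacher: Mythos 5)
Your proof is correct, and for parts (ii), (iv), and (v) it coincides with the paper's argument: the same centroid identity
\[
\cR_\cK(\om)-\ovr x_\cK\cdot\om=\frac{1}{|\cK|}\int_{\cK\setminus(\cK^+_{\la,\om}\cup\cT_{\la,\om}(\cK^+_{\la,\om}))}[\la-y\cdot\om]\,dy\ge 0
\]
proves (ii), its equality case proves (iv), and the $\alpha(\theta)$ computation proves (v). (The paper's proof contains a labeling slip --- it attributes the centroid argument to ``items (i) and (iv)'' and the reflection principle to ``items (ii) and (iii)'', where by content it clearly means (ii) and (iv), respectively (i) and (iii).)

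Where you genuinely depart from the paper is in (i) and (iii). For (i) the paper simply declares it an ``easy consequence'' of Proposition~\ref{th:nohotspot} and Corollary~\ref{coro:nohotspot}; you actually carry out the argument, and you handle two points that the paper glosses over: you work at $\la>\cR_\cK(\om)$ (so that the reflection of the open cap lands in the open set $\Om$ and the folded set is genuinely not $\om$-symmetric, both of which you verify via your ``first'' and ``second'' facts) and then pass to the limit; and you supply a proof of the interior claim, hinging on lower semicontinuity of $\cR_\cK$ and the observation that a mirror symmetry of $\cK$ across a supporting hyperplane of $\core(\cK)$ collapses $\core(\cK)$ into that hyperplane. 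These details are needed and your treatment is sound. For (iii) your route is genuinely different: the paper, by citing the two reflection statements, presumably intends the indirect argument that $x(t)\to x_\cK^I$ as $t\to 0^+$ (Varadhan's small-time asymptotics plus uniqueness of the incenter for a strictly convex body) combined with $x(t)\in\core(\cK)$ and closedness of $\core(\cK)$. You instead give a purely geometric proof --- the identity $|\cT_{\la,\om}(w)-c|^2=|w-c|^2+4(\la-w\cdot\om)(\la-c\cdot\om)$ shows that if $x_\cK^I\cdot\om>\la$ were admissible then $\cT_{\la,\om}$ would map the maximal inscribed ball to a second maximal inscribed ball with a different centre, contradicting uniqueness. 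This is more elementary and self-contained (no PDE, no Varadhan), at the cost of still needing the uniqueness of the incenter, which both routes require anyway. Both approaches are valid; yours has the advantage of detaching part (iii) from the heat-equation machinery entirely.
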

\begin{proof}
Items (i) and (iv) follow by observing that, for $\la=\cR_\cK(\om),$ the
set $\cK^+_{\la,\om}\cup\cT_{\la,\om}(\cK^+_{\la,\om})$ is contained in $\cK$
and is symmetric with respect to $\pi(\la,\om).$ Hence,
\begin{eqnarray*}
\la-\ovr{x}_\cK\cdot\om=
\frac{1}{|\cK|}\int_{\cK\setminus(\cK^+_{\la,\om}\cup\cT_{\la,\om}(\cK^+_{\la,\om}))} [\la-y\cdot\om]\ dy
\end{eqnarray*}
and the last term is non-negative, vanishing if and only if $\cK$ is $\om$-symmetric.
\par
Items (ii) and (iii) are easy consequences of Proposition \ref{th:nohotspot} and Corollary \ref{coro:nohotspot}.
\par
For a fixed $\theta\in\SSN,$ let us define
\[
\alpha(\theta)=\max\{t\, :\, \ovr x_\cK+t\theta\in\core(\cK)\},
\] 
which is non negative, thanks to (ii).
Then $x=\ovr{x}_\cK+\al(\theta)\theta\in\core(\cK)$ and 
$$
\ovr{x}_\cK\cdot\om+\al\,\theta\cdot\om\le\cR_\cK(\om),
$$
for every $\om\in\SSN$ such that $\omega\cdot\theta>0$. Hence
$$
\al\le\min_{\om\in\SSN}\frac{\cR_\cK(\om)-\ovr{x}_\cK\cdot\om}{\theta\cdot\om},
$$
thus taking the maximum as $\theta$ varies on $\SSN$ we obtain \eqref{R_K}.
\end{proof}
Informations
 on convex heat conductors with a stationary hot spot can be found in \cite{CS,GW,Ka,Kl,MS1,MS2}.

\begin{oss}
Formula \eqref{R_K} deserves some comments: observe that for every fixed $\theta\in\SSN$, the minimum problem inside the braces amounts to finding a  direction $\om$ close to $\theta$, so to maximize $\theta\cdot\omega$, and such that at the same time we can fold $\cK$ as much as possible, so to minimize the difference $\cR_\cK(\omega)-\ovr x_\cK\cdot\omega$. 
\end{oss}
\par
We conclude this subsection by an example that shows how the simultaneous
application of Proposition \ref{prop:heart} and the results of Section 2 
substantially benefits the problem of locating $x_\infty.$  
\begin{exa}
\label{exa:joint}
Let us consider a spherical cap
\[
B^+_\mu=\left\{(x_1,\dots,x_N)\in\RE^N\, :\, \sum_{i=1}^N x_i^2 =R^2,\, x_N\ge \mu\right\},
\]
with $0\le \mu<R$.
Thanks to the simmetry of $B^+_\mu$, it is easily seen that its heart is given by
\[
\core(B^+_\mu)=\{(x_1,\dots,x_N)\in\RE^N\, :\, x_1=\dots=x_{N-1}=0,\, \mu\le x_N\le (R+\mu)/2\},
\]
which is a vertical segment touching the boundary $\pa B^+_\mu$ at the point $(0,\dots,0,\mu)$. In particular, by this method we can not exclude that the hotspot $x(t)$ (or the point $x_\infty$) is on the boundary. However, we can now use the results of Section \ref{sec:polar}, to further sharpen this estimate on the location of $x_\infty$: indeed, applying Theorem \ref{th:conv}, we get
\[
\mathrm{dist}(x_\infty,\partial B^+_\mu)\ge (R-\mu)\,
\left[\frac{2^{1-N-N^2}N^{N-1}}{\lambda_1(B_1)^N}\,\frac{\omega_{N-1}}{\omega_N}\,\left(\frac{R-\mu}{R+\mu}\right)^{(N^2-1)/2}\right],
\]
where we used that $\diam(B^+_\mu)=2\sqrt{R^2-\mu^2}$ and $r_{B^+_\mu}=(R-\mu)/2$. 
\end{exa}

\subsection{Computing $\cR_\cK$}

The following theorem whose proof can be found in \cite[Theorem 5.7]{Fr} guarantees that, for a regular set (not necessarily convex),
the maximal folding function is never trivial.
\begin{teo}
\label{th:fraenkel}
Let $\Om$ be a bounded open (not necessarily convex) subset of $\RE^N,$ with $C^1$ boundary $\pa\Om$, and
denote by $\cK$ the convex hull of $\Omega.$ 
\par
For every $\om\in \SSN$,
there exists $\ep>0$ such that,
for every $\la$ in the interval $(h_\cK(\omega)-\ep,h_\cK(\om)),$ we have:
\begin{enumerate}
\item[(i)] $\cT_{\la,\om}(\Om^+_{\la,\om})\subset\Om$;
\item[(ii)] $\nu(x)\cdot\omega>0$, for every $x\in\pa\Om\cap\pi^+(\la,\om)$.
\end{enumerate}
\end{teo}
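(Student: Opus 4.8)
The plan is to work near the "cap" of $\Om$ in the direction $\om$, i.e. near the point(s) of $\pa\cK$ where the outer normal is $\om$, and to exploit the $C^1$ regularity of $\pa\Om$ to control normals there. First I would fix $\om\in\SSN$ and recall that $h_\cK(\om)=\max\{x\cdot\om:x\in\cK\}=\max\{x\cdot\om:x\in\ovr\Om\}$, so the supporting hyperplane $\pi(h_\cK(\om),\om)$ touches $\ovr\Om$ along a nonempty compact set $F=\ovr\Om\cap\pi(h_\cK(\om),\om)$. By $C^1$ regularity, at every boundary point $x\in F$ the outer unit normal $\nu(x)$ must equal $\om$ (the hyperplane supports $\Om$ from outside at $x$, and a $C^1$ boundary has a unique outer normal). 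Since $\nu$ is continuous on $\pa\Om$ and $\pa\Om$ is compact, given any $\de>0$ there is a neighborhood $V$ of $F$ in $\pa\Om$ on which $\nu(x)\cdot\om>1-\de$; in particular $\nu(x)\cdot\om>0$ there. Because $F$ lies in the hyperplane $\{x\cdot\om=h_\cK(\om)\}$ and $\ovr\Om$ is compact, there is $\ep_1>0$ such that $\{x\in\pa\Om:x\cdot\om>h_\cK(\om)-\ep_1\}\subset V$. This already yields statement (ii) for every $\la\in(h_\cK(\om)-\ep_1,h_\cK(\om))$, since $\pa\Om\cap\pi^+(\la,\om)\subset V$.

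Next I would deduce (i) from (ii), shrinking $\ep$ further if necessary. Fix $\la$ in the interval just found and consider the slab $S_\la=\pi^+(\la,\om)\cap\ovr\Om$, a thin sliver of $\Om$ near the cap. The reflection $\cT_{\la,\om}$ maps $S_\la$ into the slab $\{x:2\la-h_\cK(\om)<x\cdot\om\le\la\}$, which sits just inside $\pa\cK$ on the other side of $\pi(\la,\om)$. To see $\cT_{\la,\om}(\Om^+_{\la,\om})\subset\Om$, I would argue that for $\la$ close to $h_\cK(\om)$ the set $\Om^+_{\la,\om}$ is, in the direction $\om$, a graph over (a subset of) $\pi(\la,\om)$: more precisely, for each $x$ in $\Om$ with $\la<x\cdot\om<h_\cK(\om)$, the segment from $x$ in direction $-\om$ stays in $\Om$ at least until it crosses $\pi(\la,\om)$, because leaving $\Om$ through $\pa\Om$ in that range forces crossing a boundary point with $\nu\cdot\om>0$ by (ii), which is geometrically impossible while moving in direction $-\om$ out of the domain. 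Hence $\cT_{\la,\om}$, which moves points a distance $2(x\cdot\om-\la)$ in direction $-\om$, keeps them inside $\Om$. Making this "graph over the hyperplane" claim precise is where the $C^1$ hypothesis is genuinely used and is the step I expect to be the main obstacle: one must rule out that a thin tongue of $\Om^c$ pokes into the slab tangentially, which is exactly what $\nu\cdot\om>0$ on the relevant portion of $\pa\Om$ forbids, provided one also controls the oscillation of $\nu$ (the parameter $\de$ above) on the whole part of $\pa\Om$ that $\cT_{\la,\om}$ can reach, not just near $F$.

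To handle that, I would choose $\de$ small at the outset so that on $V$ the normal satisfies $\nu\cdot\om>1-\de$, then take $\ep=\ep(\de)\le\ep_1$ so small that the reflected slab $\cT_{\la,\om}(\Om^+_{\la,\om})$ is contained in an $\ep$-tube around $F$ whose trace on $\pa\Om$ lies in $V$; a compactness/uniform continuity argument on $\pa\Om$ gives such an $\ep$. With this choice both conclusions hold simultaneously on $(h_\cK(\om)-\ep,h_\cK(\om))$. Alternatively, and more cleanly, I would simply invoke \cite[Theorem 5.7]{Fr} as cited, remarking that the argument above reproduces its proof in the form needed here; the only point worth stressing is that the convexity of $\cK$ is not used for the local statement — only the $C^1$ regularity of $\pa\Om$ near the cap — which is precisely why the result applies to nonconvex $\Om$.
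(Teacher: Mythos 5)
The paper does not actually prove this theorem: it is stated and attributed to Fraenkel \cite[Theorem 5.7]{Fr}, so there is no in-paper proof to compare against, as you yourself note at the end of your proposal. That said, your sketch does capture the argument underlying Fraenkel's result, and its essentials are correct: (a) the contact set $F=\ovr\Om\cap\pi(h_\cK(\om),\om)$ is a nonempty compact subset of $\pa\Om$ on which the supporting-hyperplane property and $C^1$ regularity force $\nu\equiv\om$; (b) by continuity of $\nu$ and compactness of $\pa\Om$ there exists $\ep_1>0$ such that $\nu\cdot\om>0$ on $\{x\in\pa\Om: x\cdot\om>h_\cK(\om)-\ep_1\}$, which already yields (ii); and (c) for (i) one argues that the segment from $x\in\Om^+_{\la,\om}$ to $\cT_{\la,\om}(x)$ cannot exit $\Om$, since a first exit point $y$ approached along the direction $-\om$ would have to satisfy $\nu(y)\cdot\om\le 0$, contradicting (ii).

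The step you flag as the ``main obstacle'' can be closed much more cleanly than the ``$\ep$-tube around $F$'' and the auxiliary parameter $\de$ suggest; both are unnecessary. Observe that the reflected set $\cT_{\la,\om}(\Om^+_{\la,\om})$, and indeed every point on a segment joining some $x\in\Om^+_{\la,\om}$ to its reflection, lies in the half-space $\{z: z\cdot\om>2\la-h_\cK(\om)\}$. Hence it suffices to take $\ep=\ep_1/2$: then for $\la\in(h_\cK(\om)-\ep,h_\cK(\om))$ one has $2\la-h_\cK(\om)>h_\cK(\om)-\ep_1$, so any putative first-exit point $y$ of such a segment satisfies $y\cdot\om>h_\cK(\om)-\ep_1$ and therefore $\nu(y)\cdot\om>0$, giving the contradiction outright. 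No lower bound on $\nu\cdot\om$ beyond strict positivity is needed, and the relevant containment is controlled purely by the $\om$-coordinate of the slab, not by distance to $F$ in the transverse directions. With that simplification your proposal is complete and is, in substance, the argument the paper delegates to \cite{Fr}.
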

Unfortunately, the previous result is just qualitative and does not give any quantitative information about the maximal folding function.
Moreover, notice that the $C^1$ assumption on $\pa\Om$ cannot be dropped, even in the case of a convex domain: think of the spherical cap in Example \ref{exa:joint}, for which we have $\cR_{B^+_\mu}(-e_N)=h_{B^+_\mu}(-e_N)$.
\vskip.3cm
In order to compute $\cR_\cK,$ we need some more definitions.
We set
$$
\omp=\pi(0,\om)
$$ 
and for every $y\in\omp$ we define the segment
\[
\si_\om(y)=\{x\in\cK\ :\, x=y+t\om,\, t\in\RR\}.
\]
Then, we denote by $\cP_\om:\RE^N\to\om^\bot$ the projection operator on $\om^\bot$, that is the application defined by
\[
\cP_\om(x)=x-(x\cdot\om)\,\om,\ \ x\in\RE^N\,,
\]
and, for $y$ in the set
$$
\cS_\om(\cK)=\omp\cap\cP_\om(\cK)
$$
-- the {\it shadow of $\cK$ in the direction $\om$} -- we define:
$$
a_\om(y)=\min\{ t\in\RR: y+t\om\in\cK\} \ \mbox{ and } \ b_\om(y)=\max\{ t\in\RR: y+t\om\in\cK\}.
$$
\par
We say that a convex body $\cK$ is {\it $\om$-strictly convex} if
$\pa\cK$ does not contain any segment parallel to $\om.$
If $\cK$ is $\om$-strictly convex, then for every $x=y+t\om\in\pa\cK$ (equivalently $y\in\pa S_\om(\cK)$) 
such that the normal $\nu(x)$ to $\pa\cK$ at $x$ is
orthogonal to $\om$, the set $\si_\om(x)$ degenerates
to the singleton $\{x\}.$
\begin{oss}
We point out that $a_\om$ is a convex function on $\shk$, while $b_\om$ is concave;
moreover, if we set
\begin{eqnarray*}
&\mathrm{graph^+}(a_\om)=\{(y,t\,\om): y\in\shk,\ t\ge a_\om(y)\}, \\
&\mathrm{graph^-}(b_\om)=\{(y,t\,\om): y\in\shk,\ t\le b_\om(y)\},
\end{eqnarray*}
we have that
\[
\mathrm{graph^+}(a_\om)\cap\mathrm{graph^-}(b_\om)=\cK
\]
and, as soon as $\cK$ is $\om$-strictly convex,
\[
\mathrm{graph}(a_\om)\cup\mathrm{graph}(b_\om)=\pa\cK,
\]
where obviously $\mathrm{graph}(\cdot)$ denotes the graph of the relevant functions.
\end{oss}
\par
\begin{teo}
\label{th:brasco}
Let $\cK\subset\RE^N$ be a convex body. For $\om\in\SSN$ consider
the function $f:\shk\to\RE$ given by
\begin{equation}
\label{f}
f_\om(y)=\frac{a_\om(y)+b_\om(y)}{2}, \ \ y\in\shk.
\end{equation}
\par
Then
\begin{equation}
\label{lambda2}
\cR_\cK(\omega)=\max\limits_{y\in\shk} f_\om(y).
\end{equation}
\end{teo}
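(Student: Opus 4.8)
The plan is to prove the two inequalities separately. First I would show that $\cR_\cK(\om) \le \max_{y \in \shk} f_\om(y)$; equivalently, writing $\la^* = \max_{y\in\shk} f_\om(y)$, that $\cT_{\la^*,\om}(\cK^+_{\la^*,\om}) \subseteq \cK$. Take a point $x = y + t\om \in \cK^+_{\la^*,\om}$, so $y \in \shk$ and $t \in (\la^*, b_\om(y)]$. Its reflection in $\pi(\la^*,\om)$ is $y + (2\la^* - t)\om$, and since $t > \la^* \ge f_\om(y) = \tfrac{a_\om(y)+b_\om(y)}{2}$, we get $a_\om(y) \le 2\la^* - b_\om(y) < 2\la^* - t < \la^*$, so $2\la^* - t \in [a_\om(y), b_\om(y)]$, i.e. the reflected point lies on the segment $\si_\om(y) \subseteq \cK$. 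This gives the inclusion and hence the first inequality.

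For the reverse inequality $\cR_\cK(\om) \ge \max_{y\in\shk} f_\om(y)$, I would argue by contradiction: suppose $\la' := \cR_\cK(\om) < \la^* = f_\om(\bar y)$ for some $\bar y \in \shk$. Since $\la'$ is admissible, $\cT_{\la',\om}(\cK^+_{\la',\om}) \subseteq \cK$. Consider the point $x = \bar y + b_\om(\bar y)\om \in \pa\cK$, the top of the fiber over $\bar y$. Because $b_\om(\bar y) > \la^* > \la'$ (using $b_\om(\bar y) \ge f_\om(\bar y) = \la^*$, with equality only in the degenerate case $a_\om(\bar y) = b_\om(\bar y)$, which I would handle by a small perturbation or note it forces $x \in \pi(\la^*,\om)$), the point $x$ lies in $\cK^+_{\la',\om}$, so its reflection $\bar y + (2\la' - b_\om(\bar y))\om$ must lie in $\cK$, forcing $2\la' - b_\om(\bar y) \ge a_\om(\bar y)$, i.e. $\la' \ge \tfrac{a_\om(\bar y) + b_\om(\bar y)}{2} = \la^*$, a contradiction.

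The main obstacle I anticipate is not the fiberwise computation above — which is elementary once one fixes the direction $\om$ and works on each segment $\si_\om(y)$ — but rather the bookkeeping near the boundary of the shadow $\shk$ and the degenerate fibers where $a_\om(y) = b_\om(y)$ (points where $\pa\cK$ has normal orthogonal to $\om$). There one must be careful that $f_\om$ is continuous up to $\pa\shk$ and that the maximum is actually attained; I would invoke the convexity of $a_\om$ and concavity of $b_\om$ from the Remark preceding the theorem, so that $f_\om$ is concave on the convex set $\shk$, hence continuous on its interior and, being bounded, extends to a continuous function attaining its max on the compact set $\overline{\shk}$. A secondary point to check is that the hyperplane $\pi(\la^*,\om)$ does meet the interior of $\cK$, so that $\cR_\cK(\om)$ is well-defined in the sense of \eqref{lambda}; this follows because $\la^* < h_\cK(\om)$ unless $\cK$ is contained in a single fiber's worth of extremal behavior, an easy special case.
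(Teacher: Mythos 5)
Your proof is correct and takes essentially the same fiberwise approach as the paper: you reduce both inequalities to the observation that, on the fiber $\si_\om(y)$, the reflection of $y+t\om$ across $\pi(\la,\om)$ is $y+(2\la-t)\om$, which lies in $\cK$ exactly when $2\la-t\in[a_\om(y),b_\om(y)]$, and the extremal case $t=b_\om(y)$ yields $\la\ge f_\om(y)$. The paper simply presents the second inequality directly rather than by contradiction, and omits the edge-case remarks about degenerate fibers and attainment of the max (your concavity observation is a fine way to justify attainment, though the paper leaves it implicit); these are stylistic differences, not a different route.
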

\begin{proof}
Observe that
$$
\cK^+_{\la,\om}=\{y+t\om : y\in\shk, \la<t<b_\om(y)\}.
$$
\par
Let $\ovr{\la}=\cR_\cK(\om);$ since $\cT_{\ovr{\la}, \om}(\cK^+_{\ovr{\la}, \om})\subset\cK,$
then, for every point $y+t \om$ with $y\in \shk$ and $\la<t<b_\om(y),$
we have that $\cT_{\ovr{\la}, \om}(y+t \om)\in\cK;$ in particular, for $t=b_\om(y),$
we obtain that $2 \ovr{\la}-b_\om(y)\ge a_\om(y)$ and hence $\ovr{\la}\ge f_\om(y).$
Thus,
$$
\cR_\cK(\om)\ge \max\limits_{y\in\shk} f_\om(y).
$$
\par
If $y_0\in\shk$ maximizes $f_\om,$ by taking $\la=f_\om(y_0),$
we see that $\cT_{\ovr{\la}, \om}(y+t \om)\in\cK$ for every $y\in \shk$ and $\la<t<b_\om(y).$
Therefore, $\cT_{\ovr{\la}, \om}(\cK^+_{\ovr{\la}, \om})\subset\cK$ and hence $\cR_\cK(\om)\le f_\om(y_0).$
\end{proof}
If we now remember that, for a convex domain $\cK$, the quantity
$$
w_\cK(\om)=h_\cK(\om)+h_\cK(-\om),
$$
is the {\it width of $\cK$ in the direction $\om$}, we immediately get a nice consequence of the previous Theorem.
\begin{coro}
\label{th:corewidth}
Let $\cK\subset\RE^N$ be a convex body. Then we have the following
estimate for the width of $\core(\cK)$ in the direction $\om$:
\begin{equation}
\label{corewidth}
w_{\core(\cK)}(\om)\le\osc\limits_{y\in\shk}f_\om(y).
\end{equation}
\end{coro}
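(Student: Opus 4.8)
The plan is to read off the estimate directly from Theorem~\ref{th:brasco}, combined with the elementary bound \eqref{supp_core}, after understanding how $f_\om$ transforms under the reversal $\om\mapsto-\om$. First I would use the left-hand inequality in \eqref{supp_core}: applied to the two directions $\om$ and $-\om$ it gives $h_{\core(\cK)}(\om)\le\cR_\cK(\om)$ and $h_{\core(\cK)}(-\om)\le\cR_\cK(-\om)$, so that
\[
w_{\core(\cK)}(\om)=h_{\core(\cK)}(\om)+h_{\core(\cK)}(-\om)\le\cR_\cK(\om)+\cR_\cK(-\om).
\]
Thus it only remains to recognise the right-hand side as $\osc_{y\in\shk}f_\om(y)$.

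Next I would note that the shadow does not depend on the orientation of $\om$: since $(-\om)^\perp=\om^\perp$ and $\cP_{-\om}=\cP_\om$, we have $\cS_{-\om}(\cK)=\shk$. On this common domain, reversing the direction merely swaps and negates the two endpoint values of the chord $\si_\om(y)$: substituting $t\mapsto-t$ in the definitions of $a_\om$ and $b_\om$ yields $a_{-\om}(y)=-b_\om(y)$ and $b_{-\om}(y)=-a_\om(y)$ for every $y\in\shk$, whence
\[
f_{-\om}(y)=\frac{a_{-\om}(y)+b_{-\om}(y)}{2}=-\frac{a_\om(y)+b_\om(y)}{2}=-f_\om(y).
\]

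Finally I would invoke formula \eqref{lambda2} for both directions: it gives $\cR_\cK(\om)=\max_{y\in\shk}f_\om(y)$ and $\cR_\cK(-\om)=\max_{y\in\shk}f_{-\om}(y)=-\min_{y\in\shk}f_\om(y)$. Substituting into the inequality above produces
\[
w_{\core(\cK)}(\om)\le\max_{y\in\shk}f_\om(y)-\min_{y\in\shk}f_\om(y)=\osc_{y\in\shk}f_\om(y),
\]
which is exactly \eqref{corewidth}. I do not expect any genuine obstacle in this argument; the only step requiring a little care is the sign bookkeeping behind $a_{-\om}=-b_\om$ and $b_{-\om}=-a_\om$, and the harmless observation that $\shk$ is unchanged when $\om$ is replaced by $-\om$.
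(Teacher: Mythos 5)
Your argument is correct and coincides with the paper's proof: both rest on the identity $f_{-\om}=-f_\om$ on the common shadow $\shk$, the characterization $\cR_\cK(\pm\om)$ via Theorem~\ref{th:brasco}, and the bound $h_{\core(\cK)}(\pm\om)\le\cR_\cK(\pm\om)$ from \eqref{supp_core}. You merely spell out the sign bookkeeping $a_{-\om}=-b_\om$, $b_{-\om}=-a_\om$ a bit more explicitly than the paper does.
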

\begin{proof}
We first observe that $\cS_{-\om}(\cK)=\shk$, so that
$$
f_{-\om}(y)=-f_\om(y),\ \ y\in\shk,
$$
and \eqref{lambda} yields
\begin{equation}
\label{-lambda}
\cR_\cK(-\omega)=-\min\limits_{y\in\shk} f_\om(y).
\end{equation}
Then, from the definition
of width, using (\ref{supp_core}), (\ref{lambda}) and (\ref{-lambda}), we get
\[
\begin{split}
w_{\core(\cK)}(\om)&=h_{\core(\cK)}(\om)+h_{\core{\cK}}(-\om)\\
&\le\cR_\cK(\om)+\cR_\cK(-\om)\\
&=\max\limits_{y\in\shk}f_\om(y)-\min\limits_{y\in\shk}f_\om(y)=\osc\limits_{y\in\shk}f_\om(y),
\end{split}
\]
thus concluding the proof.
\end{proof}
\begin{exa}
In general, inequality \eqref{corewidth} is strict. For example, in $\RE^2$ consider the ellipse given by
\[
\cK=\left\{(t,s)\in\RE^2\ :\ \frac{t^2}{a^2}+\frac{s^2}{b^2}=1\right\},
\]
with $0<b\le a$. The function $\cR_\cK$ can be easily computed in this case:
for every $\om=(\om_1,\om_2)\in\mathbb{S}^1$ we get
\[
\cR_{\cK}(\om)=\frac{a^2-b^2}{\sqrt{b^2\om_1^2+a^2\om_2^2}}\,|\om_1\om_2|
\]
--- the set $\{\cR_\cK(\om)\,\om: \om\in\mathbb{S}^1\}$ is the image of a  {\it quadrifolium}
(a {\it rhodonea} with $4$ petals) by the mapping $(x,y)\mapsto (x/a,y/b)$.
\par
Thus, for example, by choosing the direction $\omega=(1/\sqrt{2},1/\sqrt{2}),$ the right-hand side of \eqref{corewidth}
equals
\[
\frac{a^2-b^2}{2\sqrt{2}}\sqrt{\frac{1}{a^2+b^2}},
\]
while clearly the left-hand side is zero since $\core(\cK)=\{(0,0)\}$, due to the simmetries of $\cK$.

\par
This example also highlights the interest of the quantity $\osc_{\shk}f_\om-w_{\core(\cK)}$, which can be seen as a measure of the lack of symmetry of $\cK$ in
the direction of $\om$.
\end{exa}
The function $f_\om$ in \eqref{f} can be explicitly computed by the use of the Fourier transform: this is the content of the next result.
\begin{teo}
\label{th:fourier}
Let $\cK$ be a convex body and for $\om\in\SSN$, let $f_\om$ be the function defined in \eqref{f}. 
\par
\begin{equation}
\label{fourierf}
f_\om(y)=\frac{i\int_{\omp}\pa_\om\wh\one_\cK(\eta)\, e^{iy\cdot\eta}d\eta}{\int_{\omp}\wh\one_\cK(\eta)\, e^{iy\cdot\eta}d\eta},
\ y\in\shk,
\end{equation}
where $\wh\one_\cK$ denotes the Fourier transform of the characteristic function of $\cK$ and $\pa_\om$
differentiation in the direction $\om.$
\end{teo}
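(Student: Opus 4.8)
The plan is to derive \eqref{fourierf} from the Fourier slice (projection) theorem: restricting $\wh\one_\cK$ to the hyperplane $\omp$ reproduces the $(N-1)$-dimensional Fourier transform of the function on $\omp$ obtained by integrating $\one_\cK$ along the lines parallel to $\om$, while one differentiation in the direction $\om$, performed before the restriction, reproduces the transform of the associated first moment. Dividing the second identity by the first then recovers $f_\om$.

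To make this precise, set $\ell_\om(y)=b_\om(y)-a_\om(y)$ for $y\in\shk$ and $\ell_\om(y)=0$ otherwise, so that $\ell_\om(y)$ is the length of the segment $\si_\om(y)$. Fixing $\om\in\SSN$ and writing each $x\in\RE^N$ as $x=y+t\om$ with $y\in\omp$ and $t\in\RE$, one has $x\cdot\eta=y\cdot\eta$ whenever $\eta\in\omp$ (since $\om\cdot\eta=0$), so Fubini gives
\[
\wh\one_\cK(\eta)=\int_{\omp}\Big(\int_{a_\om(y)}^{b_\om(y)}dt\Big)\,e^{-iy\cdot\eta}\,dy=\int_{\omp}\ell_\om(y)\,e^{-iy\cdot\eta}\,dy .
\]
Differentiating under the integral sign, $\pa_\om\wh\one_\cK(\eta)=-i\int_{\RE^N}(x\cdot\om)\,\one_\cK(x)\,e^{-ix\cdot\eta}\,dx$, and since $x\cdot\om=t$ and $\int_{a_\om(y)}^{b_\om(y)}t\,dt=\frac{1}{2}\big(b_\om(y)^2-a_\om(y)^2\big)=\ell_\om(y)\,f_\om(y)$, the same slicing yields, for $\eta\in\omp$,
\[
i\,\pa_\om\wh\one_\cK(\eta)=\int_{\omp}\ell_\om(y)\,f_\om(y)\,e^{-iy\cdot\eta}\,dy .
\]
Thus the restrictions $\wh\one_\cK|_{\omp}$ and $i\,\pa_\om\wh\one_\cK|_{\omp}$ are exactly the $(N-1)$-dimensional Fourier transforms on $\omp\simeq\RE^{N-1}$ of the bounded compactly supported functions $\ell_\om$ and $\ell_\om f_\om$.

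I would then invert: by the Fourier inversion theorem on $\omp$, at every $y$ interior to $\shk$,
\[
\frac{1}{(2\pi)^{N-1}}\int_{\omp}\wh\one_\cK(\eta)\,e^{iy\cdot\eta}\,d\eta=\ell_\om(y),\qquad \frac{i}{(2\pi)^{N-1}}\int_{\omp}\pa_\om\wh\one_\cK(\eta)\,e^{iy\cdot\eta}\,d\eta=\ell_\om(y)\,f_\om(y),
\]
and, since $\ell_\om(y)>0$ there, dividing the second identity by the first produces \eqref{fourierf}; the normalizing constants $(2\pi)^{N-1}$ coincide and cancel, which is why none of them appears in the statement, and at boundary points of $\shk$ the formula is read through the continuous extension of $f_\om$. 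The only routine verifications are the use of Fubini in the two slicing identities (legitimate because $\one_\cK\in L^1$ with compact support) and the two elementary one-dimensional integrals.

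The delicate point is the inversion step: $\ell_\om$ is only as smooth as a concave function that vanishes at the relative boundary of its support --- generically H\"older-$1/2$ when $\cK$ is strictly convex, merely piecewise affine when $\cK$ is a polyhedron --- so $\wh\one_\cK|_{\omp}$ need not lie in $L^1(\omp)$ when $N\ge 3$, and the integrals in \eqref{fourierf} must be understood as improper (equivalently, Gauss--Weierstrass regularized) integrals. I would handle this by mollification: convolve $\ell_\om$ and $\ell_\om f_\om$ on $\omp$ with a Gaussian of variance $\delta$, for which Fourier inversion is unproblematic, and let $\delta\to 0^+$; since $\ell_\om$ and $f_\om$ are continuous at interior points of $\shk$ the regularized values converge there, and because the same regularization sits on both numerator and denominator the quotient passes to the limit and gives the asserted identity.
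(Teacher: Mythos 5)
Your proof is correct and follows essentially the same route as the paper's: both slice $\RE^N$ along $\om$, observe that $\wh\one_\cK\big|_{\omp}$ and $\pa_\om\wh\one_\cK\big|_{\omp}$ are the $(N-1)$-dimensional Fourier transforms of $b_\om-a_\om$ and $\tfrac12(b_\om^2-a_\om^2)$ respectively, invert, and divide. (The paper parametrizes $\xi=\eta+\tau\om$ and sets $\tau=0$ or differentiates in $\tau$ before setting $\tau=0$; you phrase the same computation as the Fourier slice theorem plus the elementary moment integral --- a cosmetic difference.) One substantive point in your favour: you flag that for $N\ge 3$ the restricted transform $\wh\one_\cK\big|_{\omp}$ need not be absolutely integrable on $\omp$ (the decay rate $|\eta|^{-(N+1)/2}$, e.g.\ for a ball, is not summable on $\RE^{N-1}$ once $N\ge 3$), so that ``Fourier inversion'' must be read in a summability or regularized sense; the paper applies the inversion formula without comment. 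Your Gauss--Weierstrass mollification argument is a clean way to justify the step, and it is a genuine improvement in rigor, though it does not change the structure of the proof.
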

\begin{proof}
For $x\in\cK$ and $\xi\in\RN$ we write $x=y+t\om$ and $\xi=\eta+\tau\om,$ with $y\in\shk$, $\eta\in\omp$ and $t, \tau\in\RR$.
By Fubini's theorem we compute
\begin{eqnarray}
\label{charK}
\wh\one_\cK(\xi)&=&\int_\cK e^{-ix\cdot\xi}\ dx\nonumber\\
&=&\int_{\shk}\left(\int_{-\infty}^\infty \one_\cK(y+t\om)\, e^{-it\tau}dt \right) e^{-iy\cdot\eta}dy\nonumber\\
&=&\int_{\shk}\left(\int_{a_\om(y)}^{b_\om(y)} e^{-it\tau}dt \right) e^{-iy\cdot\eta}dy.
\end{eqnarray}
For $\tau=0$ we then obtain:
\begin{eqnarray}
\label{charK2}
\wh\one_\cK(\eta)=\int_{\shk}[b_\om(y)- a_\om(y)]\, e^{-iy\cdot\eta}dy.
\end{eqnarray}
Therefore, by the inversion formula for the Fourier transform, we have:
\begin{equation}
\label{diffab}
\frac1{(2\pi)^{N-1}} \int_{\omp}\wh\one_\cK(\eta)\, e^{iy\cdot\eta}d\eta=\left\{\begin{array}{cc}
b_\om(y)-a_\om(y),& y\in\shk,\\
0,& x\in\omp\setminus\shk.
\end{array}
\right.
\end{equation}
\par
By \eqref{charK}, we also obtain that
$$
\pa_\om\wh\one_\cK(\xi)=\frac{d}{d\tau}\wh\one_\cK(\eta+\tau\om)=
-i\,\int_{\shk}\left(\int_{a_\om(y)}^{b_\om(y)} t\, e^{-it\tau}dt \right) e^{-iy\cdot\eta}dy,
$$
$$ 
\pa_\om\wh\one_\cK(\eta)=-i\,\int_{\shk}\frac{b_\om(y)^2- a_\om(y)^2}{2}\, e^{-iy\cdot\eta}dy, \ \eta\in\omp,
$$
and hence
\begin{equation}        {}
\label{diffquadab}
\frac{i}{(2\pi)^{N-1}} \int_{\omp}\pa_\om\wh\one_\cK(\eta)\, e^{iy\cdot\eta}d\eta=\left\{\begin{array}{cc}
\frac{b_\om(y)^2-a_\om(y)^2}{2}, & y\in\shk,\\
0,& y\in\omp\setminus\shk.
\end{array}
\right.
\end{equation}
\par
Formula \eqref{fourierf} follows from \eqref{diffquadab} and  \eqref{diffab} at once.
\end{proof}

\begin{oss}
If $\cK$ is a polygon, $\wh\one_\cK$ can be explicitly computed in terms of the vertices of $\cK$. Let $\cK\subset\RR^2$ be a (convex) polygon with vertices
$p_1,\dots, p_n;$ we assume that $p_1,\dots, p_n$ are ordered counterclockwise and we set $p_{n+1}=p_1.$
\par
Rewriting $\wh\one_\cK$ as a boundary integral (see \cite{Herz}) by means of the divergence theorem, we have that  
\[
\wh\one_\cK(\xi)=-\frac{1}{|\xi|^2}\,\sums_{j=1}^n |p_{j+1}-p_j|(\nu_j\cdot\xi)\,\frac{e^{-ip_{j+1}\cdot\xi}-e^{-ip_j\cdot\xi}}{(p_{j+1}-p_j)\cdot\xi},
\]
where 
$$
\nu_j=
\left(
\begin{array}{cc}
0 &1\\
-1 &0
\end{array}
\right)
\frac{p_{j+1}-p_j}{|p_{j+1}-p_j|}, \ j=1,\dots, n,
$$
is the exterior normal to the $j$-th side of $\cK.$
Also, $\pa_\om\wh\one_\cK(\eta)$ is easily computed from the previous expression:
\[
\begin{split}
\ds\pa_\om\wh\one_\cK(\eta)&=\frac{1}{|\eta|}\,\sums_{j=1}^n |p_{j+1}-p_j|^2
\frac{e^{-ip_{j+1}\cdot\eta}-e^{-ip_j\cdot\eta}}{[(p_{j+1}-p_j)\cdot\eta]^2}\\
&+
\ds\frac{i}{|\eta|^2}\,\sums_{j=1}^n |p_{j+1}-p_j|(\nu_j\cdot\eta)\,
\frac{(p_{j+1}\cdot\om)\,e^{-ip_{j+1}\cdot\eta}-(p_j\cdot\om)\,e^{-ip_j\cdot\eta}}{(p_{j+1}-p_j)\cdot\eta}.
\end{split}
\]
\end{oss}

\subsection{Necessary optimality conditions}

We conclude this section by presenting some necessary conditions for the optimality
of $f_\om.$
To this aim, we first state and prove an easy technical result
for the subdifferential of a function.

\begin{lm}
\label{th:tech}
Let $\Om\subset\RE^k$ be a convex open set.
Let $\varphi$ and $\psi$ be a convex and, respectively, a concave function from $\Om$ to $\RE$. If
$\varphi+\psi$ attains its maximum at a point $y_0\in\Om$, then
\begin{equation}
\label{contenuto}
\pa\varphi(y_0)\subset \pa(-\psi)(y_0).
\end{equation}
\end{lm}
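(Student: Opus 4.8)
The plan is to argue directly from the definition of the subdifferential, using the maximality of $\varphi+\psi$ at $y_0$ as the bridge between the two one-sided inequalities. First I would fix an arbitrary $\xi\in\pa\varphi(y_0)$ (if this set is empty there is nothing to prove) and write out what membership means:
\[
\varphi(y)\ge \varphi(y_0)+\xi\cdot(y-y_0)\qquad\text{for every }y\in\Om .
\]
Next I would exploit that $y_0$ maximizes $\varphi+\psi$ on $\Om$, which gives $\varphi(y)+\psi(y)\le \varphi(y_0)+\psi(y_0)$, i.e.
\[
\varphi(y)-\varphi(y_0)\le \psi(y_0)-\psi(y)\qquad\text{for every }y\in\Om .
\]

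Chaining these two displays yields, for every $y\in\Om$,
\[
\xi\cdot(y-y_0)\le \varphi(y)-\varphi(y_0)\le \psi(y_0)-\psi(y)=(-\psi)(y)-(-\psi)(y_0),
\]
which is precisely the statement that $\xi\in\pa(-\psi)(y_0)$; here one uses that $-\psi$ is a (finite) convex function on $\Om$ because $\psi$ is concave, so that its subdifferential is the standard notion appearing in \eqref{contenuto}. Since $\xi\in\pa\varphi(y_0)$ was arbitrary, the inclusion $\pa\varphi(y_0)\subset\pa(-\psi)(y_0)$ follows, and the proof is complete.

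There is essentially no serious obstacle here: the argument is purely a matter of combining the defining inequality of the subgradient with the first-order-free consequence of the maximum condition. The only points that require a little care are the elementary sign bookkeeping in passing from $\psi$ to $-\psi$, and the (trivial) remark that $-\psi$ being convex makes $\pa(-\psi)(y_0)$ meaningful in the sense used throughout this section; both are immediate.
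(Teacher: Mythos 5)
Your argument is correct and is essentially the same as the paper's: both proofs combine the defining inequality of $\xi\in\pa\varphi(y_0)$ with the rearranged maximality condition $\varphi(y)-\varphi(y_0)\le\psi(y_0)-\psi(y)$ to read off $\xi\in\pa(-\psi)(y_0)$. Your write-up is a slightly more explicit unpacking of the same one-line chain of inequalities, with no substantive difference.
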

\begin{proof}
It is clear that both $\pa\varphi(y_0)$ and $\pa(-\psi)(y_0)$ are non-empty.
Since $y_0$ is a maximum point, we get
\[
\varphi(y_0)+\psi(y_0)\ge \varphi(y)+\psi(y)  \mbox{ for every } y\in\Om,
\]
and hence
\begin{equation*}
\label{sub}
\varphi(y)-\varphi(y_0)+\xi\cdot(y-y_0)\le -\psi(y)+\psi(y_0)+\xi\cdot(y-y_0) 
\end{equation*}
for every $\xi\in\RE^N,$ and $y\in\Om.$
If $\xi\in\pa\varphi(y_0),$ then we have $\xi\in\pa(-\psi)(y_0).$
\end{proof}

As a consequence of the definitions of $a_\om$ and $b_\om$, we have that
$\pa a_\om(y_0)\cup\pa(-b)_\om(y_0)=\varnothing$
implies that $y_0$ belongs to the boundary of $\shk.$

We are now in a position to state a necessary optimality condition.
\begin{teo}
\label{th:necessary}
Let $\cK\subset\RE^N$ be a convex body and $\om\in \SSN$. Suppose that $f_\om$
attains its maximum at a point $y_0\in\shk,$ that is
\[
\cR_\cK(\om)=f_\om(y_0).
\]
Set $\la=\cR_\cK(\om)$ and for every $x_0\in\cP^{-1}_\om(y_0)\cap\pa\cK$
denote by  $x_0^\la$ its reflection with respect to the optimal hyperplane, that is $x_0^\la=\cT_{\la,\om} x_0.$

Then: \begin{enumerate}
\item[(i)] if $x_0\not=x_0^\la$, we have
\begin{equation}
\label{necessary}
\cA_{\om}(N_\cK(x_0^\la))\subseteq N_\cK(x_0),
\end{equation}
where 
$$
N_\cK(x)=\{\xi\in\RN\setminus\{0\}: x\cdot\xi=h_\cK(\xi)\}\cup\{0\}.
$$
denotes the {\it normal cone} of $\cK$ at a point $x\in\pa\cK$;
\item[(ii)] if $x_0=x_0^\la$, there holds
\begin{equation}
\label{necessary_b}
\cA_{\om}(N^-_\cK(x_0)\})\subseteq N^+_\cK(x_0),
\end{equation}
where
\[
N^-_\cK(x)=\{\xi\in N_\cK(x)\, :\, \xi\cdot\omega\le 0\}\ \mbox{ and }\ N^+_\cK(x)=\{\xi\in N_\cK(x)\, :\, \xi\cdot\omega\ge 0\}.
\]
\end{enumerate}
\end{teo}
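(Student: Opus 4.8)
The plan is to read the maximality of $y_0$ for $f_\om$ as a containment of ``slopes'' of $\pa\cK$ measured along the segments $\si_\om(\cdot)$, and then to translate it into the asserted inclusion of normal cones. Throughout, the relevant point over $y_0$ is $x^+=y_0+b_\om(y_0)\,\om$, i.e. the one lying on $\mathrm{graph}(b_\om)$; since $\la=\cR_\cK(\om)=f_\om(y_0)=\frac{a_\om(y_0)+b_\om(y_0)}{2}$, its reflection is $\cT_{\la,\om}(x^+)=x^-:=y_0+a_\om(y_0)\,\om$, the point on $\mathrm{graph}(a_\om)$, and $x^+\neq x^-$ exactly in case (i). I will use repeatedly that $\cA_\om$ fixes $\omp$ pointwise and reverses $\om$, so that $\cA_\om(\xi-\om)=\xi+\om$ for every $\xi\in\omp$.

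First I would establish the subgradient inclusion
\[
\pa a_\om(y_0)\subseteq\pa(-b_\om)(y_0).
\]
Since $a_\om$ is convex, $b_\om$ concave and $a_\om+b_\om=2f_\om$ attains its maximum at $y_0$, this is exactly Lemma~\ref{th:tech} (with $\varphi=a_\om$, $\psi=b_\om$) when $y_0$ belongs to the relative interior of $\shk$. In general --- reading $\pa a_\om$ and $\pa(-b_\om)$ as subdifferentials of the convex functions obtained by extending $a_\om$, $-b_\om$ by $+\infty$ outside $\shk$ --- it follows directly: if $\xi\in\pa a_\om(y_0)$ then, using $2f_\om\le 2\la$ and $2\la=a_\om(y_0)+b_\om(y_0)$,
\[
b_\om(y)=2f_\om(y)-a_\om(y)\le 2\la-a_\om(y_0)-\xi\cdot(y-y_0)=b_\om(y_0)-\xi\cdot(y-y_0)\qquad(y\in\shk),
\]
which says $\xi\in\pa(-b_\om)(y_0)$.

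Next I would set up the dictionary between these subdifferentials and the normal cones of $\cK$. Writing points of $\RE^N$ as $x=y+t\om$ with $y\in\omp$, $t\in\RE$, and using $\mathrm{graph}^+(a_\om)\cap\mathrm{graph}^-(b_\om)=\cK$, the support-plane description of the subdifferential gives, for $\xi\in\omp$,
\[
\xi\in\pa a_\om(y)\iff \xi-\om\in N_\cK\big(y+a_\om(y)\om\big),\qquad
\xi\in\pa(-b_\om)(y)\iff \xi+\om\in N_\cK\big(y+b_\om(y)\om\big);
\]
moreover every $n\in N_\cK(x^-)$ satisfies $n\cdot\om\le 0$ (since $x^-+s\om\in\cK$ for small $s>0$), the rays with $n\cdot\om<0$ being precisely the positive multiples of the vectors $\xi-\om$ with $\xi\in\pa a_\om(y_0)$, while any $n\in N_\cK(x^-)$ with $n\cdot\om=0$ annihilates the whole segment $\si_\om(y_0)$ and hence also belongs to $N_\cK(x^+)$. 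With these facts in hand I would conclude: pick $n\in N_\cK(x^-)=N_\cK(\cT_{\la,\om}(x^+))$; if $n\cdot\om<0$ write $n=s(\xi-\om)$ with $s>0$, $\xi\in\pa a_\om(y_0)$, so by the first step $\xi\in\pa(-b_\om)(y_0)$ and $\cA_\om n=s(\xi+\om)\in N_\cK(x^+)$; if $n\cdot\om=0$ then $\cA_\om n=n\in N_\cK(x^+)$; and there are no $n\in N_\cK(x^-)$ with $n\cdot\om>0$. This yields $\cA_\om\big(N_\cK(\cT_{\la,\om}(x^+))\big)\subseteq N_\cK(x^+)$, i.e. \eqref{necessary}. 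In case (ii), $x^+=x^-=:x_0\in\pi(\la,\om)$ and $a_\om(y_0)=b_\om(y_0)$; the same dictionary now writes $N^-_\cK(x_0)$ as the closed cone generated by the $\xi-\om$, $\xi\in\pa a_\om(y_0)$, together with the normals of $\cK$ orthogonal to $\om$, and $N^+_\cK(x_0)$ as the one generated by the $\xi+\om$, $\xi\in\pa(-b_\om)(y_0)$, together with the same orthogonal normals; since $\cA_\om(\xi-\om)=\xi+\om$, $\cA_\om$ fixes the orthogonal normals, and $\pa a_\om(y_0)\subseteq\pa(-b_\om)(y_0)$, this gives $\cA_\om(N^-_\cK(x_0))\subseteq N^+_\cK(x_0)$, i.e. \eqref{necessary_b}.

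The hard part will be the convex-geometric bookkeeping of the dictionary when $y_0\in\pa\shk$: there the segment $\si_\om(y_0)$ lies on $\pa\cK$ as a (possibly degenerate) ``vertical'' face, $\pa a_\om(y_0)$ and $\pa(-b_\om)(y_0)$ may be unbounded, and one must check carefully that the closed conical hull of $\{\xi-\om:\xi\in\pa a_\om(y_0)\}$ together with the normals of $\cK$ orthogonal to $\om$ is exactly $N_\cK(x^-)$, with the analogous statements at $x^+$ and at $x_0$. This is routine but delicate; note in particular that case (ii) is always of this type, since a degenerate segment $a_\om(y_0)=b_\om(y_0)$ forces $y_0\in\pa\shk$.
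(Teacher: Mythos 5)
Your argument follows essentially the same route as the paper's: the subgradient inclusion $\pa a_\om(y_0)\subseteq\pa(-b_\om)(y_0)$ (the paper's Lemma~\ref{th:tech}, which you also rederive directly so as to cover $y_0\in\pa\shk$), translated into the asserted normal-cone inclusion via the correspondence between subgradients of $a_\om$, $b_\om$ and supporting hyperplanes of $\cK$, plus the observation that $\cA_\om$ sends the normal $\xi-\om$ at the lower graph to $\xi+\om$ at the upper graph. The only real divergence is in organizing the boundary case $y_0\in\pa\shk$: the paper splits into two subcases (all supporting hyperplanes parallel to $\om$, respectively $x_0=x_0^\la$ treated via the $\pi(\la,\om)$-symmetric body $\cK^+_{\la,\om}\cup\cT_{\la,\om}(\cK^+_{\la,\om})$), whereas you propose a single unified ``dictionary'' augmenting the subgradient cones by the $\om$-orthogonal normals --- a cleaner decomposition, but you correctly flag this as the delicate step and leave its verification as a sketch, so in that respect your write-up is somewhat less complete than the paper's.
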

\begin{proof}
We can suppose for simplicity that $\om=e_N=(0, \dots, 0, 1).$ We first suppose that $y_0$ is
an interior point.
Since by its very definition $f_\om$ is the sum of a convex function and a concave one,
by Lemma \ref{th:tech} 
\begin{equation}\label{inclusionsubg} 
\pa a_\om(y_0)\subset\pa(-b_\om)(y_0)\,.
\end{equation}
Let us now set $x_0=y_0+b_\om(y_0)\,\om;$ the reflection of $x_0$
in the optimal hyperplane is $x_0^\la=y_0+a_\om(y_0)\,\om$ and
$$
N_\cK(x_0)=\{(\eta,1)\,:\,\eta\in\pa(- b_\om)(y_0)\}\,\quad
N_\cK(x_0^\la)=\{(\eta,-1)\,:\,\eta\in\pa a_\om(y_0)\}\,,
$$
for
$
\mathrm{graph^+}(a_\om)\cap\mathrm{graph^-}(b_\om)=\cK.
$
Then, since $\cA_\om(\eta,1)=(\eta,-1)$, \eqref{inclusionsubg} implies \eqref{necessary}.
\vskip.3cm
If $y_0$ is on the boundary of $\shk,$ then $\pa(- b_\om)(y_0)$
or $\pa a_\om(y_0)$ may be empty: this is clearly the case if $\mathrm{graph}(a_\om)$ or $\mathrm{graph}(b_\om)$ have some vertical parts. 
Observe that actually we have the following possibilities:
\begin{enumerate}
\item[(1)] $\pa(- b_\om)(y_0)=\pa a_\om(y_0)=\varnothing$;
\item[(2)] $\pa(-b_\om)(y_0)\not=\varnothing$.
\end{enumerate}
\par
If (1) holds, then at every $x_0\in\cP^{-1}(y_0)\cap\pa\cK$, 
the convex body $\cK$ has only supporting hyperplanes parallel to $e_N$: these are
invariant with respect to the action of $\cA_\om$, so that
their reflections are supporting hyperplanes for $\cK$ at $x_0^\la$ and formula \eqref{necessary} or \eqref{necessary_b} easily follows.
\par
If (2) holds, we have $x_0=x_0^\lambda$ and let us call $\Om=\cK^+_{\la,\omega}\cup\cT_{\la,\omega}(\cK^+_{\la,\omega})$. Then 
\[
N_\cK(x_0)\subset N_\Om(x_0)
\]
and we have $N^+_\Om(x_0)=N^+_\cK(x_0)$, so that
\[
N^-_\Om(x_0)=N_\Om(x_0)\setminus N^+_\Om(x_0) \supset N_\cK(x_0)\setminus N^+_\cK(x_0)=N^-_\cK(x_0).
\]
By observing that $N^-_\Om(x_0)=\cA_\omega(N^+_\Om(x_0))=\cA_\omega(N^+_\cK(x_0))$, \eqref{necessary_b} follows.
\end{proof}
\begin{coro}
Under the same notations of Theorem \ref{th:necessary}, if $\pa\cK$ admits a (unique) unit normal $\nu$ at the point $x_0$, 
then it admits a unit normal at the point $x_0^\la$ too and 
\begin{equation}
\label{necessaryC1}
\cA_{\om}\nu(x_0^\la)=\nu(x_0)\,.
\end{equation}
In particular, if $x_0=x_0^\lambda$ we have $\nu(x_0)\in\om^\bot$.
\end{coro}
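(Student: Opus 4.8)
The corollary is essentially a matter of ``reading off'' Theorem~\ref{th:necessary} once we observe that, under the uniqueness hypothesis, the normal cone $N_\cK(x_0)$ is just the ray $\{s\,\nu(x_0):s\ge 0\}$, and that $\cA_\om$ is an orthogonal involution which fixes $\om^\bot$ pointwise — in particular it is norm–preserving, maps rays to rays, and $\cA_\om^2=\mathrm{Id}$. The plan is to split into two cases according to whether $x_0$ lies on the optimal hyperplane $\pi(\la,\om)$; since $\cT_{\la,\om}x_0=x_0+2(\la-x_0\cdot\om)\om$, this is exactly the dichotomy $x_0=x_0^\la$ versus $x_0\neq x_0^\la$.

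\emph{Case $x_0\neq x_0^\la$.} First I would check that $x_0^\la\in\pa\cK$, so that $N_\cK(x_0^\la)$ is a well-defined, non-trivial closed convex cone: writing $y_0=\cP_\om(x_0)$, the point $x_0$ is either an endpoint of the fiber $\si_\om(y_0)$ or lies in its relative interior (in which case the whole fiber is contained in $\pa\cK$, since a supporting hyperplane of $\cK$ at $x_0$ must contain the segment), and in every instance, using $\la=\cR_\cK(\om)=f_\om(y_0)=\tfrac12(a_\om(y_0)+b_\om(y_0))$, one sees that $\cT_{\la,\om}x_0$ again belongs to $\si_\om(y_0)\subseteq\pa\cK$. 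Now Theorem~\ref{th:necessary}(i) gives $\cA_\om(N_\cK(x_0^\la))\subseteq N_\cK(x_0)=\{s\,\nu(x_0):s\ge 0\}$; applying $\cA_\om$ to both sides and using $\cA_\om^2=\mathrm{Id}$ yields $N_\cK(x_0^\la)\subseteq\{s\,\cA_\om\nu(x_0):s\ge 0\}$. A non-trivial closed convex cone contained in a ray must coincide with that ray, so $\pa\cK$ admits a unique unit normal at $x_0^\la$, namely $\nu(x_0^\la)=\cA_\om\nu(x_0)$ (a unit vector, as $\cA_\om$ is an isometry), and one more application of $\cA_\om$ gives $\cA_\om\nu(x_0^\la)=\nu(x_0)$.

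\emph{Case $x_0=x_0^\la$.} Here $x_0\cdot\om=\la=\cR_\cK(\om)$, so with $y_0=\cP_\om(x_0)$ and $\la=\tfrac12(a_\om(y_0)+b_\om(y_0))$ we get $a_\om(y_0)\le x_0\cdot\om=\tfrac12(a_\om(y_0)+b_\om(y_0))\le b_\om(y_0)$, which forces either $a_\om(y_0)=b_\om(y_0)$ (the fiber degenerates to $\{x_0\}$) or $a_\om(y_0)<x_0\cdot\om<b_\om(y_0)$ (so $x_0$ is in the relative interior of $\si_\om(y_0)$, hence, being on $\pa\cK$, the whole fiber lies in $\pa\cK$). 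I would then argue that $y_0$ must lie on the (relative) boundary of the shadow $\shk$: since $\cK$ is a full-dimensional convex body, an elementary interpolation argument shows that every point of the interior of $\shk$ carries a fiber of positive length whose relative interior is contained in the interior of $\cK$, so neither alternative above is compatible with $y_0$ being interior to $\shk$. Given $y_0\in\pa\shk$, pick a supporting line of the $(N-1)$-dimensional convex body $\shk$ at $y_0$, with outer normal $\mu\in\om^\bot\setminus\{0\}$; since $\cP_\om(x)\cdot\mu=x\cdot\mu$ for $\mu\perp\om$, the halfspace $\{x:x\cdot\mu\le y_0\cdot\mu\}$ supports $\cK$ at $x_0$, i.e. $\mu\in N_\cK(x_0)$. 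As $N_\cK(x_0)=\{s\,\nu(x_0):s\ge 0\}$ contains the non-zero vector $\mu\in\om^\bot$, we conclude $\nu(x_0)\in\om^\bot$, and therefore $\cA_\om\nu(x_0^\la)=\cA_\om\nu(x_0)=\nu(x_0)$ because $\cA_\om$ fixes $\om^\bot$ pointwise.

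Assembling the two cases proves the statement, the ``in particular'' clause being precisely the conclusion of the second one. I expect the second case to be the real obstacle: Theorem~\ref{th:necessary}(ii) on its own only delivers $\nu(x_0)\cdot\om\ge 0$, so one genuinely needs the geometric observation that a boundary point of $\cK$ lying on the optimal hyperplane $\pi(\cR_\cK(\om),\om)$ must project to the boundary of the shadow $\shk$ — this is what produces a supporting hyperplane of $\cK$ parallel to $\om$ and thereby pins $\nu(x_0)$ down to $\om^\bot$.
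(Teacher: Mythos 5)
Your proof is correct. The first case ($x_0\ne x_0^\la$) is exactly the paper's argument made explicit: with the unique--normal hypothesis, $N_\cK(x_0)$ collapses to the ray $\{s\,\nu(x_0):s\ge 0\}$, and applying the involution $\cA_\om$ to the inclusion \eqref{necessary} forces $N_\cK(x_0^\la)$ to be the ray spanned by $\cA_\om\nu(x_0)$; your remark that $x_0^\la\in\pa\cK$ (so that $N_\cK(x_0^\la)$ is nontrivial) is a sensible precaution that the paper takes for granted.

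Where you genuinely add something is the second case. The paper's proof simply declares that \eqref{necessaryC1} is a consequence of \eqref{necessary} or \eqref{necessary_b}, but — as you correctly point out — when $x_0=x_0^\la$ and $\nu(x_0)\cdot\om>0$, the set $N^-_\cK(x_0)$ is trivial and \eqref{necessary_b} becomes vacuous, so it only rules out $\nu(x_0)\cdot\om<0$ and cannot by itself give $\nu(x_0)\in\om^\bot$. Your extra geometric argument — that $x_0=x_0^\la$ with $x_0\in\pa\cK$ forces $y_0=\cP_\om(x_0)\in\pa\,\cS_\om(\cK)$ (either the fibre degenerates or $x_0$ is interior to a fibre of positive length, neither compatible with $y_0$ interior to the shadow), and that a supporting hyperplane of the shadow lifts to an $\om$-parallel supporting hyperplane of $\cK$ at $x_0$, producing a nonzero $\mu\in\om^\bot\cap N_\cK(x_0)$ and hence $\nu(x_0)\in\om^\bot$ — is exactly what is needed to close this gap, and it is not in the paper. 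So: same basic idea (normal cone becomes a ray; invoke Theorem \ref{th:necessary}), but you supply an essential extra step for the ``in particular'' clause that the paper's terse proof omits.
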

\begin{proof}
It is sufficient to observe that in this case 
$$
N_\cK(x_0)=\{\xi\, :\, \xi=t\,\nu(x_0),\, t> 0\}\cup\{0\},
$$
and hence \eqref{necessaryC1} is a consequence of \eqref{necessary} or \eqref{necessary_b}.
\end{proof}
Using Theorem \ref{th:necessary}, we obtain an interesting upper bound on the maximal folding function for a strictly convex domain, in terms of its support function.
\begin{prop}
If $\cK$ is strictly convex, then for every $\omega\in\SSN$
\begin{equation}
\label{paolo}
\cR_\cK(\omega)\leq\max\left\{
\left(\frac{\nabla h_\cK(\xi)+\nabla h_\cK(\cA_\om \xi)}{2}\right)\,\cdot\omega\,:
\,\xi\in\RE^N\setminus\{0\}\right\}.
\end{equation}
\end{prop}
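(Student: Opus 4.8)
The idea is to combine Theorem~\ref{th:brasco}, which identifies $\cR_\cK(\om)$ with the maximum of $f_\om$ over the shadow $\shk$, with the necessary optimality condition of Theorem~\ref{th:necessary} and the fact that for a strictly convex $C^1$ body the boundary is parametrized via the gradient of the support function. Recall the classical identity: if $\cK$ is strictly convex with $C^1$ boundary, then for each $\xi\in\RE^N\setminus\{0\}$ the point $\na h_\cK(\xi)\in\pa\cK$ is the unique boundary point whose outer normal points in the direction $\xi$, and every point of $\pa\cK$ arises this way. Thus boundary points of $\cK$ are exactly the points $\na h_\cK(\xi)$, $\xi\neq 0$.

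\par
Now fix $\om\in\SSN$ and let $y_0\in\shk$ be a maximizer of $f_\om$, so that $\cR_\cK(\om)=f_\om(y_0)=\frac{a_\om(y_0)+b_\om(y_0)}2$. Set $x_0=y_0+b_\om(y_0)\,\om\in\pa\cK$, the ``upper'' boundary point over $y_0$; its reflection in the optimal hyperplane $\pi(\cR_\cK(\om),\om)$ is $x_0^\la=y_0+a_\om(y_0)\,\om\in\pa\cK$, the ``lower'' one. Since $\cK$ is strictly convex and $C^1$, there is a unique unit normal $\nu(x_0)$ at $x_0$, and by the Corollary to Theorem~\ref{th:necessary} we get $\cA_\om\,\nu(x_0^\la)=\nu(x_0)$, i.e.\ the normals at the two points are reflections of one another. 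Writing $\xi$ for (a positive multiple of) $\nu(x_0)$, so that $x_0=\na h_\cK(\xi)$, and noting that $\cA_\om$ is an involutive orthogonal matrix with $\cA_\om\xi$ then a positive multiple of $\nu(x_0^\la)$, we obtain $x_0^\la=\na h_\cK(\cA_\om\xi)$. (One should check $\na h_\cK$ is well-defined here: strict convexity of $\cK$ forces $h_\cK$ to be $C^1$ away from the origin.)

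\par
It remains to express $\cR_\cK(\om)$ in terms of these. Because $x_0$ and $x_0^\la$ both lie on the line $\si_\om(y_0)=\{y_0+t\om:t\in\RE\}$ and are reflections across $\pi(\cR_\cK(\om),\om)$, the midpoint of the segment $[x_0,x_0^\la]$ lies on that hyperplane, hence $\tfrac12(x_0+x_0^\la)\cdot\om=\cR_\cK(\om)$. Substituting $x_0=\na h_\cK(\xi)$ and $x_0^\la=\na h_\cK(\cA_\om\xi)$ gives
\begin{equation*}
\cR_\cK(\om)=\left(\frac{\na h_\cK(\xi)+\na h_\cK(\cA_\om\xi)}2\right)\cdot\om
\end{equation*}
for this particular $\xi$, which is certainly bounded above by the maximum over all $\xi\in\RE^N\setminus\{0\}$, yielding \eqref{paolo}.

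\par
The main obstacle is the regularity bookkeeping at the boundary of $\shk$: the optimality condition in Theorem~\ref{th:necessary} splits into the cases $x_0\neq x_0^\la$ and $x_0=x_0^\la$, and in the latter (degenerate) case the relation between normals is only the containment \eqref{necessary_b} rather than the clean equality \eqref{necessaryC1}. However, when $\cK$ is strictly convex and $C^1$, the degenerate case $x_0=x_0^\la$ forces $\nu(x_0)\in\om^\bot$, i.e.\ $\cA_\om\xi$ is parallel to $\xi$, so the two terms in the averaged expression coincide and the formula still reads $\cR_\cK(\om)=\na h_\cK(\xi)\cdot\om$, consistent with the claimed bound. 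If $\cK$ is merely strictly convex without assuming $C^1$, one instead picks any $\xi\in N_\cK(x_0)\setminus\{0\}$ and $\cA_\om\xi\in N_\cK(x_0^\la)$ using \eqref{necessary}, and interprets $\na h_\cK$ in the (single-valued, by strict convexity) sense that $\na h_\cK(\xi)$ is the unique boundary point with outer normal $\xi$; the same midpoint computation then goes through, and taking the maximum over $\xi$ again gives \eqref{paolo}.
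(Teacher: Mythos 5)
Your proof follows essentially the same route as the paper: combine Theorem~\ref{th:brasco}, the optimality condition of Theorem~\ref{th:necessary}, and the identity $\nabla h_\cK(\theta)=x$ for $\theta\in N_\cK(x)$ (valid since strict convexity of $\cK$ forces $h_\cK\in C^1(\RE^N\setminus\{0\})$), then read off $\cR_\cK(\om)=f_\om(y_0)$ as the averaged expression evaluated at a particular $\xi$, which is trivially bounded by the supremum. One small slip in your closing remark: from \eqref{necessary}, namely $\cA_\om(N_\cK(x_0^\la))\subseteq N_\cK(x_0)$, you cannot pick \emph{any} $\xi\in N_\cK(x_0)$ and conclude $\cA_\om\xi\in N_\cK(x_0^\la)$ --- the inclusion goes the other way; you should start from $\theta\in N_\cK(x_0^\la)$ and set $\xi=\cA_\om\theta$, which is exactly what the paper does (since $\cA_\om$ is an involution the resulting formula is of course identical). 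On the plus side, your treatment of the degenerate case $x_0=x_0^\la$ via \eqref{necessary_b} is more explicit than the paper's, which silently invokes only case (i) of Theorem~\ref{th:necessary}.
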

\begin{proof}
First observe that thanks to the $1-$homogeneity of the support function, the maximization problem in \eqref{paolo} can be equivalently settled in $\SSN$.
\par
The strict convexity of $\cK$ implies that $h_\cK\in C^1(\RE^N\setminus\{0\})$ (see \cite{Sc}). Moreover, 
$\na h_\cK(\theta)=x$ for every $\theta\in N_\cK(x)$, with $x\in\pa\cK$.
Thus, with the same notations as in Theorem \ref{th:necessary}, $x_0^\lambda=\nabla h_{\cK}(\theta)$ for every $\theta\in N_\cK(x_0^\lambda)$ and using the condition \eqref{necessary}, we also get $x_0=\na h_\cK(A_\omega(\theta))$; on the other hand,
$a_\om(y_0)=x_0^\la\cdot\omega$ and $b_\om(y_0)=x_0\cdot\omega$, which implies the following
\[
f_\om(y_0)=\frac{\nabla h_\cK(\theta)+\nabla h_\cK(\cA_\om \theta)}{2}\,\cdot\omega\,.
\]
Hence we can conclude by simply applying Theorem \ref{th:brasco}. 
\end{proof}

\begin{oss}
If $\cK$ is symmetric with respect to a hyperplane orthogonal to $\omega$, then equality holds in \eqref{paolo} and both quantities equal $\ovr x_\cK\cdot\omega$. Otherwise,
in general inequality \eqref{paolo} is strict as Figure $1$ informs us.  
\begin{figure}[tbp]
\centerline{
\includegraphics[scale=.4]{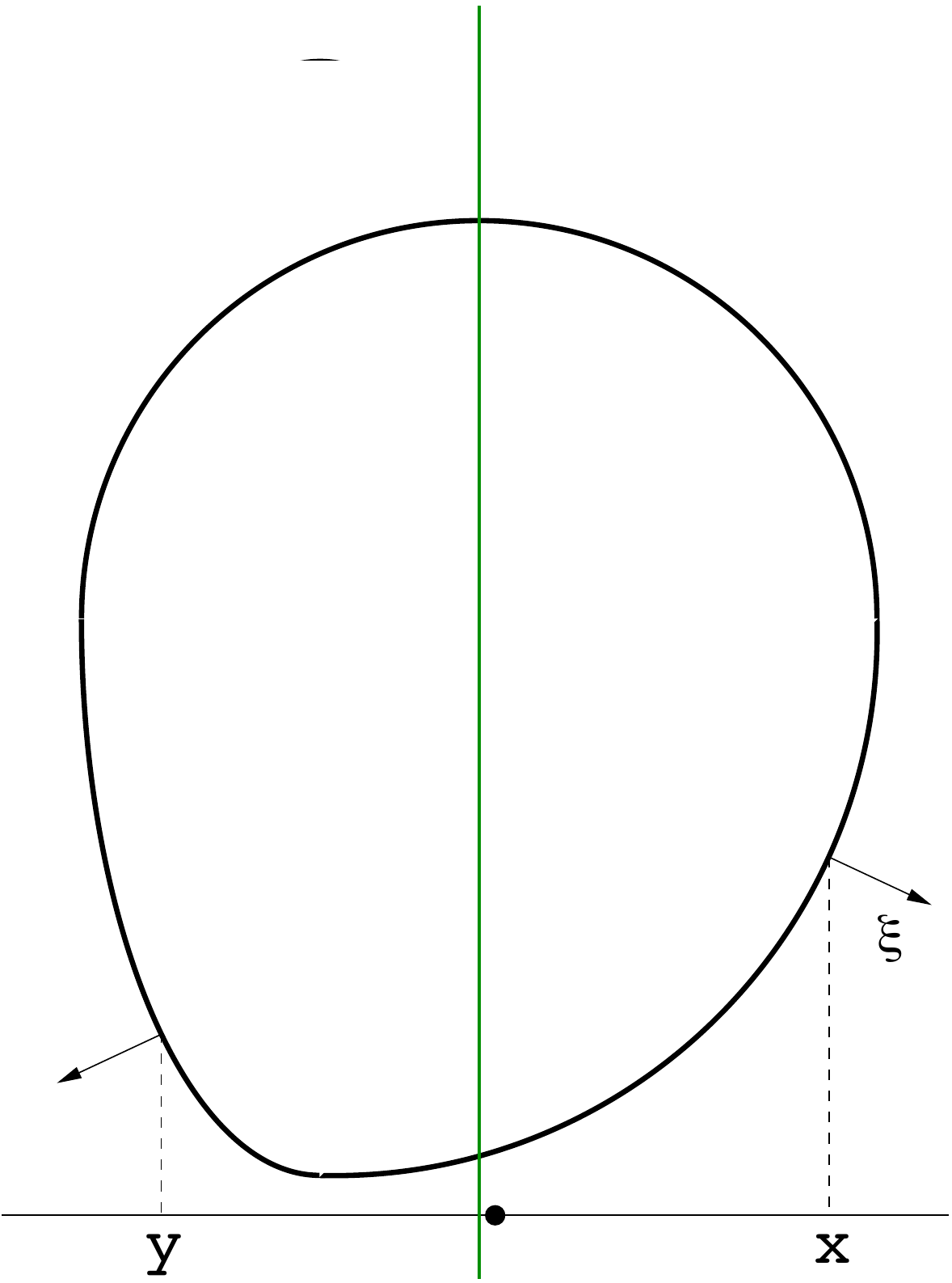}}
\caption{Here $\om=(1,0,\dots,0),$ $x=\na h_\cK(\xi),$ $y=\na h_\cK(\cA_{\om}(\xi));$ 
the intersection of the two straight lines corresponds to $\cR_\cK(\om),$  the dark dot corresponds to 
$\frac12\,(x+y).$}
\end{figure}
\par
Notice that, following an argument similar to that of the proof of Proposition \ref{paolo}, we can in fact give a precise characterization
of the maximal folding function $\cR_\cK$ in terms of the support function $h_\cK$. Precisely the following holds
\begin{equation}
\label{paolo2}
\cR_\cK(\omega)=\max_{\xi\in\Sigma(\omega)}
\frac{\nabla h_\cK(\xi)+\nabla h_\cK(\cA_\om \xi)}{2}\,\,,
\end{equation}
where
$$
\Sigma(\omega)=\left\{\xi\in\RE^N\setminus\{0\}\,:\,\nabla h_\cK(\xi)=\nabla h_\cK(\cA_\om \xi)+\mu\,\omega\,\text{ for some }\mu\in\RE\right\}.
$$
If $\cK$ is not strictly convex (and then $h_\cK$ is not $C^1$) the above formula still remains valid, up to suitably interpreting the gradient of $h_\cK$ as the subdifferential $\partial h_\cK$.
\end{oss}

\section{Numerical examples}
\label{sec:algoritmo}

\subsection{The case of convex polyhedrons}

If $\cK$ is a convex polyhedron, then the conclusions of Theorem \ref{th:brasco} can be improved:
roughly speaking, we can discretize the optimization problem (\ref{lambda}),
by only visiting the projections of the vertices
of $\cK$ on $\om^\perp.$ We begin with the following general result.
\begin{lm}
Let $A$ and $B$ be convex sets such that $A\subseteq B$ and let $x\in\partial A\cap\partial B$. 
\par
If $\pa A$ contains a segment $\ell$ and $x$ belongs to the relative interior of $\ell$, 
then $\ell$ is also contained in $\pa B.$ 
\end{lm}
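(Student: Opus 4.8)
The plan is to argue by contradiction using a supporting hyperplane of the larger convex set $B$. Suppose $\ell\subseteq\pa A$ is a segment whose relative interior contains $x\in\pa A\cap\pa B$, but $\ell\not\subseteq\pa B$. Since $A\subseteq B$ and $x\in\pa B$, there exists a supporting hyperplane $\pi$ of $B$ at $x$, i.e. a hyperplane through $x$ with $B$ contained in one of the closed half-spaces $\pi^-$ determined by $\pi$. Because $A\subseteq B\subseteq\pi^-$, the hyperplane $\pi$ also supports $A$ at $x$.

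The key step is to exploit that $x$ lies in the \emph{relative interior} of the segment $\ell$. Write $\ell=[p,q]$ with $x=(1-s)p+sq$ for some $s\in(0,1)$. Since $\ell\subseteq\pa A\subseteq A\subseteq\pi^-$, both endpoints $p,q$ belong to $\pi^-$. If either endpoint, say $p$, were strictly inside $\pi^-$ (not on $\pi$), then the point $x$, being a proper convex combination of $p$ and $q$ with $p\notin\pi$, would also lie strictly inside $\pi^-$; this contradicts $x\in\pi$. Hence both $p$ and $q$ lie on $\pi$, and therefore the entire segment $\ell=[p,q]$ is contained in $\pi$.

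Now I would conclude by showing $\ell\subseteq\pa B$. We already know $\ell\subseteq A\subseteq B$, so it remains to check that no point of $\ell$ lies in the interior of $B$. But $\ell\subseteq\pi$ and $B\subseteq\pi^-$ with $\pi$ a supporting hyperplane; every point of $\pi\cap B$ is a boundary point of $B$, since any neighborhood of such a point meets the open complement of $\pi^-$, which is disjoint from $B$. Therefore $\ell\subseteq\pi\cap B\subseteq\pa B$, as desired.

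The main obstacle, and the only place where the hypothesis that $x$ is in the \emph{relative interior} of $\ell$ (rather than merely an endpoint) is genuinely used, is the step forcing both endpoints of $\ell$ onto the supporting hyperplane $\pi$: without the relative-interior assumption one could only conclude $x\in\pi$, and indeed the statement is false for an endpoint (think of a segment on $\pa A$ with one endpoint poking out to touch $\pa B$ transversally). Everything else is a routine unwinding of the definitions of supporting hyperplane and boundary for convex sets; one should just be a little careful that $\pi$ need not be unique, but any single choice of supporting hyperplane of $B$ at $x$ suffices for the argument.
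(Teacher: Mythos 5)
Your proof is correct and follows essentially the same route as the paper: pick a supporting hyperplane $\pi$ of $B$ at $x$, observe it also supports $A$, and use the fact that $x$ is in the relative interior of $\ell$ to force the whole segment into $\pi$ and hence into $\partial B$. You simply spell out the intermediate steps (in particular, why both endpoints must lie on $\pi$) a bit more explicitly than the paper does.
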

\begin{proof}
In other words, if $A$ ``touches" $B$ from the interior at $x$ and $x$ is contained in the interior of some segment on the boundary of $A$, then the boundary of $B$ must contain all the segment at the same.
\par
Indeed, let $\pi$ be a support hyperplane to $B$ at $x$ and denote by $\pi^+$ the half-space delimited by $\pi$ and containing $B$; then
$\pi$ is also a support hyperplane to $A$ at $x$ and $A\subseteq\pi^+$. Thus, $\ell\subset\pi^+$ while $x\in\ell\cap\pi\neq\varnothing;$ this implies $\ell\subset\pi,$ since $x$ is not an endpoint of $\ell$, and 
hence $\ell\subset\partial B$.
\end{proof}
\begin{coro}
\label{coro:lm}
Under the same assumptions and notations of Theorem \ref{th:necessary}, if
$x_0$ belongs to the relative interior of a segment $\ell$ contained in $\pa\cK$,
then $\cT_{\la,\om}(\ell)\subset~\pa\cK$ and
\begin{equation}\label{fconstantonsegment}
f_\om(y)=\cR_\cK(\om)\quad\mbox{for every }y\in\cP_\om(\ell)\,.
\end{equation}
\end{coro}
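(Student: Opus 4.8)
The plan is to combine the previous lemma with the optimality condition from Theorem~\ref{th:necessary}, using the geometric picture of folding $\cK$ across the optimal hyperplane $\pi(\la,\om)$ with $\la=\cR_\cK(\om)$. First I would set $\Om=\cK^+_{\la,\om}\cup\cT_{\la,\om}(\cK^+_{\la,\om})$; by the very definition of $\cR_\cK$ this is a convex set (being the intersection of $\cK$ with the slab-symmetrization, or more directly the union of a ``cap'' of $\cK$ and its reflection, which fills out a symmetric convex body), and $\Om\subseteq\cK$ with $x_0\in\pa\Om\cap\pa\cK$ since $x_0$ lies over the maximizer $y_0$ of $f_\om$ and hence on $\mathrm{graph}(b_\om)$, the ``upper'' part of $\pa\cK$ that is reflected into $\pa\Om$. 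The key point is that $\ell\subset\pa\cK$ passes through $x_0$ in its relative interior, and since $x_0\in\pa\Om$ as well, the previous Lemma (applied with $A=\Om$, $B=\cK$) forces $\ell\subset\pa\Om$.

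Next I would observe that once $\ell\subset\pa\Om$ and $x_0$ is interior to $\ell$, the segment $\ell$ lies entirely in the reflected cap $\cT_{\la,\om}(\cK^+_{\la,\om})$ (it cannot cross $\pi(\la,\om)$ transversally, as that would put part of it strictly below the hyperplane and outside the reflected cap while still on $\pa\Om$; and if it met $\pi(\la,\om)$ it would have to lie in it, a case handled the same way). Therefore $\cT_{\la,\om}(\ell)\subset\pa\cK$, because reflecting $\pa\Om\cap\pi^+(\la,\om)$ gives exactly the ``lower'' part $\mathrm{graph}(a_\om)$ of $\pa\cK$ over $\cP_\om(\ell)$. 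Writing $\ell=\{y+b_\om(y)\,\om: y\in\cP_\om(\ell)\}$ for $y$ ranging over the projected segment, the inclusion $\cT_{\la,\om}(\ell)\subset\pa\cK$ says precisely $2\la-b_\om(y)=a_\om(y)$ for all such $y$, i.e. $f_\om(y)=\la=\cR_\cK(\om)$, which is \eqref{fconstantonsegment}.

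The main obstacle I anticipate is the bookkeeping at the boundary of the shadow $\shk$ and the possibility that $x_0$ coincides with its reflection $x_0^\la$ (case (ii) of Theorem~\ref{th:necessary}), where $\ell$ could be a segment parallel to $\om$ sitting in a ``vertical'' face of $\pa\cK$; there one must check that $\cP_\om(\ell)$ is a single point and the statement degenerates harmlessly, so that $f_\om$ being constant on $\cP_\om(\ell)$ is vacuous. A second delicate point is justifying that $\Om$ is genuinely convex and that $x_0\in\pa\Om$: this uses that $y_0$ maximizes $f_\om$, hence $b_\om(y_0)>\la$ would be impossible (it would contradict $\cT_{\la,\om}(\cK^+_{\la,\om})\subset\cK$ unless the fold is not maximal), so $x_0$ sits on the part of $\pa\cK$ reflected into $\pa\Om$ rather than in the interior of $\Om$. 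Once these edge cases are dispatched, the proof is the two-line synthesis above.
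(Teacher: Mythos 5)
The central step of your argument is a misapplication of the Lemma, and it runs in exactly the wrong direction. The Lemma says: for convex sets $A\subseteq B$ with $x\in\partial A\cap\partial B$, \emph{if} $\ell\subset\partial A$ (with $x$ a relative‑interior point of $\ell$) \emph{then} $\ell\subset\partial B$. You take $A=\Om$, $B=\cK$, and from the hypothesis $\ell\subset\partial\cK=\partial B$ you want to deduce $\ell\subset\partial\Om=\partial A$. That is the converse, and it is false for general convex $A\subseteq B$: take $B$ a square, $A$ a small disk inside it tangent to the bottom edge at $x$, and $\ell$ a piece of that edge containing $x$ — then $\ell\subset\partial B$ but $\ell\cap\partial A=\{x\}$. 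What the Lemma's proof actually yields from your data is only that $\ell$ lies in every supporting hyperplane of $\cK$ at $x_0$; it does \emph{not} put $\ell$ on $\partial A$ unless you first know $\ell\subset A$. And $\ell\subset A=\cK\cap\cT_{\la,\om}(\cK)$ — equivalently $\cT_{\la,\om}(\ell)\subset\cK$ — is precisely the substantive content that must be supplied separately using the optimality of $y_0$; once it is, the Lemma (now legitimately applied with $B'=\cT_{\la,\om}(\cK)$) gives $\cT_{\la,\om}(\ell)\subset\partial\cK$ essentially for free.

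The clean way to fill the gap, and to get \eqref{fconstantonsegment} simultaneously, is to use the structure of $f_\om$. With $x_0=y_0+b_\om(y_0)\,\om$ on $\mathrm{graph}(b_\om)$ (the convention used in the proof of Theorem~\ref{th:necessary}), the segment $\ell\subset\partial\cK$ forces $b_\om$ to be affine along $\cP_\om(\ell)$; hence $f_\om=(a_\om+b_\om)/2$ is \emph{convex} on $\cP_\om(\ell)$. It attains its global maximum $\cR_\cK(\om)$ at $y_0$, which is in the relative interior of $\cP_\om(\ell)$, so by convexity $f_\om\equiv\cR_\cK(\om)$ there — this is \eqref{fconstantonsegment}. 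Then $a_\om(y)=2\cR_\cK(\om)-b_\om(y)$ on $\cP_\om(\ell)$, which says exactly that $\cT_{\la,\om}(\ell)=\{y+a_\om(y)\,\om: y\in\cP_\om(\ell)\}\subset\partial\cK$. Your second paragraph is also muddled on the geometry: $x_0$ lies on the upper graph, hence on the cap $\cK^+_{\la,\om}$ (or $\pi(\la,\om)$), not in the reflected cap, so the assertion that $\ell$ ``lies entirely in the reflected cap'' is not what happens. Finally, beware that the hypothesis ``$x_0\in\cP_\om^{-1}(y_0)\cap\partial\cK$'' in Theorem~\ref{th:necessary} admits both the upper and lower point, but the statement of the Corollary really requires $x_0$ on $\mathrm{graph}(b_\om)$ (otherwise, a triangle with a flat base and $\om$ pointing toward the apex gives a counterexample); this case distinction has to be addressed, not just flagged as ``bookkeeping.''
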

\begin{proof}
The proof follows from the previous lemma by setting $B=\cK$ and $A=\cK\cap\cT_{\la,\om}(\cK)$ and using the definition of $f_\omega$.
\end{proof}
\par

\begin{teo}
\label{th:poligoni}
Let $x_1,\dots,x_s\in\RE^N$ be the vertices of an $N$-dimensional convex polyhedron $\cK\subset\RE^N,$
so that
\[
\cK=\left\{x\in\RE^N\ :\ x=\sum\limits_{i=1}^s\la_i x_i,\ \mbox{with } \sum\limits_{i=1}^s \la_i =1.\ \la_i\in[0,1]\right\},
\]
For a fixed $\omega\in \SSN$, let $f_\om$ be the function
defined by \eqref{f}. 
\par
Then $\cR_\cK$ is the solution of the following discrete optimization problem
\begin{equation}
\label{lambda_pol}
\cR_\cK(\omega)=\max\left\{f_\om(y_j) :\ j=1,\dots,s\right\},
\end{equation}
where $y_j=\cP_\om(x_j)$ is the projection of $x_j$ on $\shk$, for every $j=1,\dots,s$.
\end{teo}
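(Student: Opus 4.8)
The plan is to reduce the continuous optimization problem \eqref{lambda2} from Theorem~\ref{th:brasco} to the finite set $\{y_1,\dots,y_s\}$ by exploiting the piecewise-linear structure of $f_\om$ on the shadow $\shk$. First I would recall that, since $\cK$ is a polyhedron, both $a_\om$ and $b_\om$ are piecewise-linear on $\shk$: the shadow $\shk$ is itself a convex polytope in $\omp$, and its faces are the projections $\cP_\om$ of faces of $\cK$, so $\shk$ is subdivided into finitely many cells on each of which $a_\om$ is affine and finitely many cells on each of which $b_\om$ is affine. Taking a common refinement, $f_\om=(a_\om+b_\om)/2$ is affine on each cell $Q$ of a finite polytopal subdivision of $\shk$, and the vertices of this subdivision are projections of vertices of $\cK$ (the only way new vertices could appear is through intersections of projected edges, but in fact the relevant maximum is attained at an actual projected vertex, as the next step shows). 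An affine function on a polytope attains its maximum at a vertex of that polytope, hence $\cR_\cK(\om)=\max_y f_\om(y)$ is attained at some vertex $y_0$ of the subdivision.

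The delicate point — and the main obstacle — is to show that the maximum can be moved to a vertex of $\shk$ that is the projection of an actual vertex $x_j$ of $\cK$, rather than merely to some vertex of the refined subdivision, which a priori could be an interior point of $\shk$ created by overlapping projected faces. Here is where Corollary~\ref{coro:lm} enters decisively. Suppose $f_\om$ attains its maximum $\cR_\cK(\om)$ at a point $y_0$ that is not the projection of a vertex of $\cK$. Pick $x_0\in\cP_\om^{-1}(y_0)\cap\pa\cK$; since $x_0$ is not a vertex of the polyhedron $\cK$, it lies in the relative interior of some positive-dimensional face, hence in the relative interior of a segment $\ell\subset\pa\cK$. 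By Corollary~\ref{coro:lm}, $f_\om$ is constant and equal to $\cR_\cK(\om)$ on the whole segment $\cP_\om(\ell)$. I would then argue that $\cP_\om(\ell)$, being a segment in $\shk$ on which $f_\om$ attains its global maximum, can be extended within $\shk$ (staying in the region where the maximum is attained, using piecewise-linearity and the fact that a level set of a piecewise-affine function at its max value is a polytopal complex) until one reaches an extreme point of $\shk$; such an extreme point is a vertex of $\shk$, which — for a polytope shadow — must be the projection of a vertex of $\cK$. Thus $\cR_\cK(\om)=f_\om(y_j)$ for some $j$.

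Finally, since trivially $\max_j f_\om(y_j)\le\max_{y\in\shk}f_\om(y)=\cR_\cK(\om)$ by Theorem~\ref{th:brasco}, combining this with the reverse inequality just established yields \eqref{lambda_pol}. I expect the bulk of the write-up to be the careful bookkeeping in the second paragraph: verifying that the vertices of $\shk$ are exactly among $\{\cP_\om(x_1),\dots,\cP_\om(x_s)\}$ (which follows because $\cP_\om$ is linear, so $\shk=\cP_\om(\cK)$ is the convex hull of $\{\cP_\om(x_i)\}$, and every vertex of a convex hull of finitely many points is one of those points), and the ``sliding along a level segment to a vertex'' argument. The rest is a direct consequence of Theorem~\ref{th:brasco} and Corollary~\ref{coro:lm}.
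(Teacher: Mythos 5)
Your core idea — reduce to a maximizer $y_0$, lift to a boundary point $x_0\in\cP_\om^{-1}(y_0)\cap\pa\cK$, observe that a non-vertex $x_0$ lies in the relative interior of a segment on $\pa\cK$, and invoke Corollary~\ref{coro:lm} to propagate the maximum value along the projected segment — is exactly the mechanism used in the paper. But your ``sliding'' step contains a genuine gap, and the paper's formulation avoids it with one extra word of care in how the segment $\ell$ is chosen.

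The gap: you argue that the set where $f_\om$ attains its maximum is a polytope, and that by sliding along this level set you eventually ``reach an extreme point of $\shk$, which must be the projection of a vertex of $\cK$.'' Neither half of this claim is justified. The level set $L=\{y\in\shk:f_\om(y)=\cR_\cK(\om)\}$ is indeed convex, but there is no reason for its extreme points to lie on $\pa\shk$, let alone to be vertices of $\shk$: an extreme point of $L$ can sit in the interior of $\shk$, and extreme points of $\shk$ are not a priori where the maximum is achieved. What one can prove (by applying Corollary~\ref{coro:lm} repeatedly at a vertex $y_*$ of $L$, finding a new segment through $y_*$ if $x_*$ is not a vertex of $\cK$, and deriving a contradiction unless the relevant edge of $\cK$ is parallel to $\om$) is that the vertices of $L$ lying in the interior of $\shk$ are projections of vertices of $\cK$ — but even this needs a separate argument for vertices of $L$ on $\pa\shk$, which you never address. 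As written, the step does not go through. Also, your entire first paragraph about the common refinement of the affine cells of $a_\om$ and $b_\om$ is never actually used: the second paragraph starts afresh from an arbitrary maximizer $y_0$, which already exists by compactness in Theorem~\ref{th:brasco}, so the piecewise-linear bookkeeping is superfluous.

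The paper's proof sidesteps the whole sliding discussion by choosing the segment $\ell$ correctly the first time. The tangency point $p$ (in the notation of the paper; your $x_0$), if not itself a vertex, lies in the relative interior of some $m$-dimensional face $F$ of $\pa\cK$ with $m\ge 1$; since $F$ is a polytope it has vertices, so one can take $\ell$ to be the segment from a vertex $v$ of $F$ through $p$, extended to $\pa F$. Then $p$ is in the relative interior of $\ell$, Corollary~\ref{coro:lm} gives $f_\om\equiv\cR_\cK(\om)$ on $\cP_\om(\ell)$, and in particular $f_\om(\cP_\om(v))=\cR_\cK(\om)$ — with $\cP_\om(v)$ one of the $y_j$. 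One application of the corollary suffices; no iteration, no analysis of the level set. Your argument can be repaired by simply making the same choice of $\ell$ (forcing one endpoint to be a vertex of $\cK$) instead of taking an arbitrary segment and trying to slide afterward.
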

\begin{figure}[tbp]
\centerline{
\includegraphics[scale=.4]{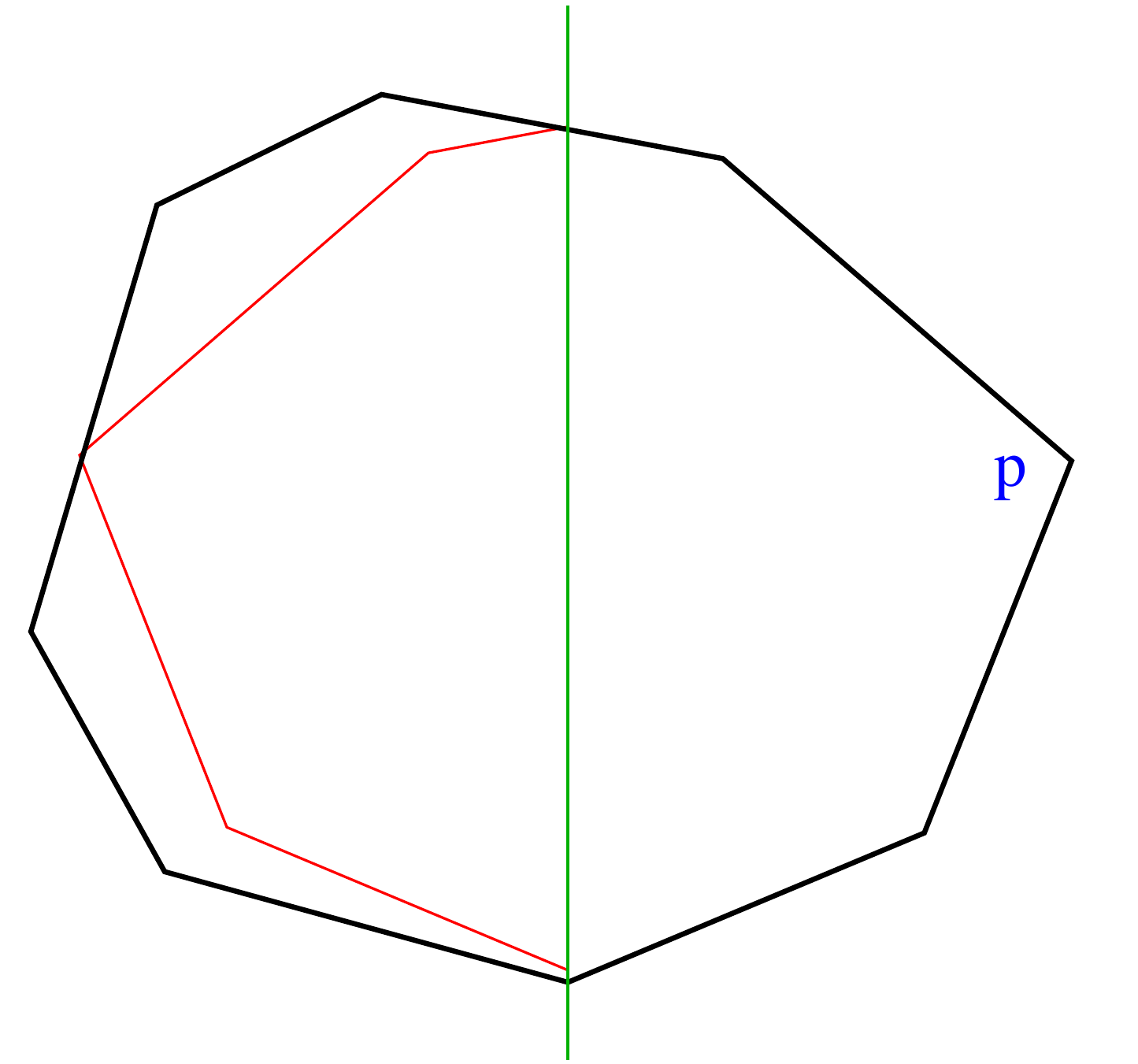}
\includegraphics[scale=.5]{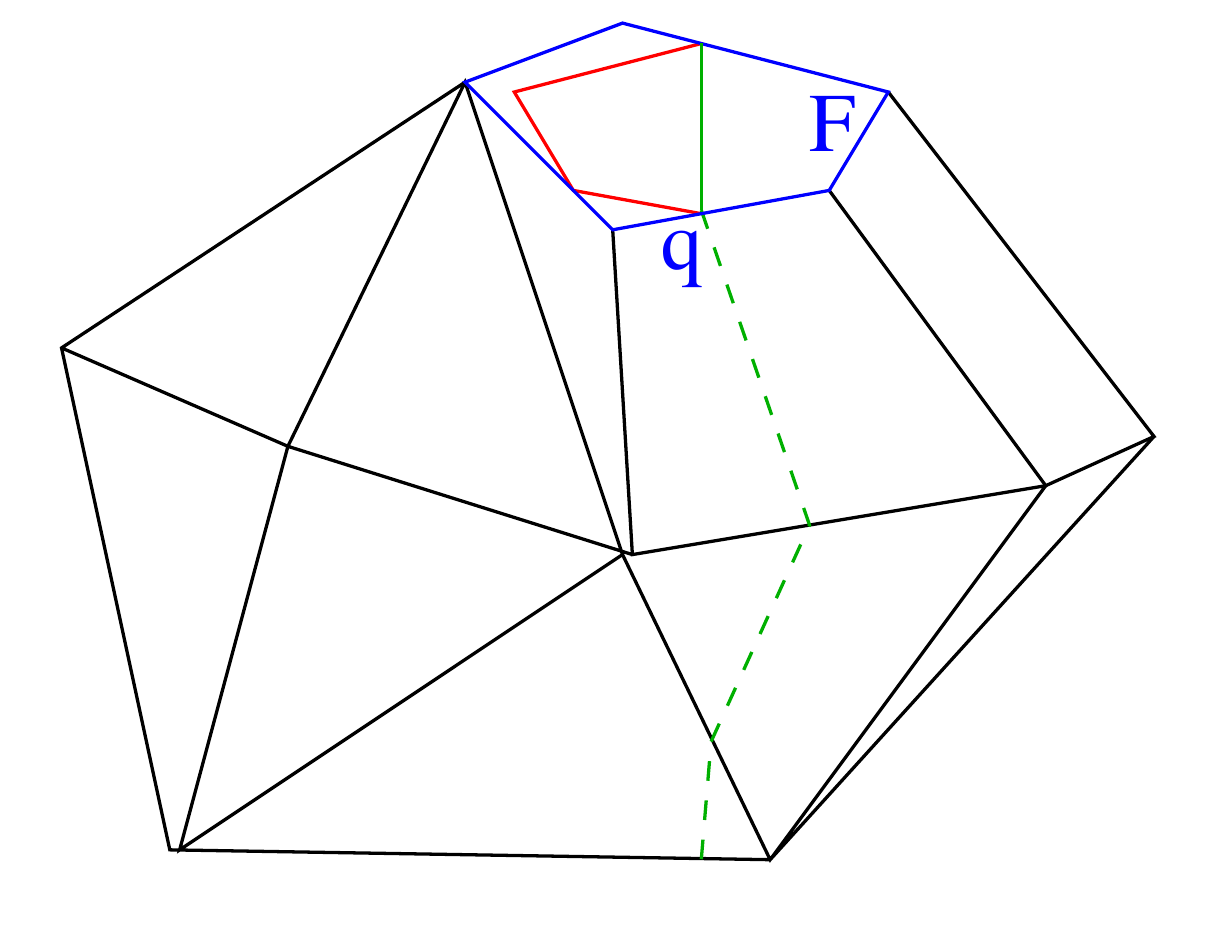}}
\caption{The two cases (i) and (ii).}
\end{figure}
\begin{proof}
By definition \eqref{lambda} and Theorem \ref{th:brasco}, 
we know that the value $\la=\cR_\cK(\om)$ can possibly be achieved when the 
boundary of the reflected cap $\cT_{\la,\om}(\cK^+_{\la,\om})$ is tangent
to that of $\cK$ either
\begin{itemize}
\item[(i)] at a point $p\notin\pi(\la,\om),$ or
\item[(ii)] at a point $q\in\pi(\la,\om)$
\end{itemize}
(see Figure 2).
Thus, the maximum of $f_\om$ is attained at the projection of either $p$ or $q$
on $\shk.$
\par
Now, let $\cK$ be a convex polyhedron. If $p$ is not a vertex of $\cK,$ then $p$
belongs to the relative interior of some $m$-dimensional facet of $\pa\cK,$
with $1\le m\le N-1,$ and hence it belongs to the relative interior of a segment
$\ell$ with (at least) one end at some vertex $v$ of $\pa\cK.$
\begin{figure}[h]
\centerline{
\includegraphics[scale=.5]{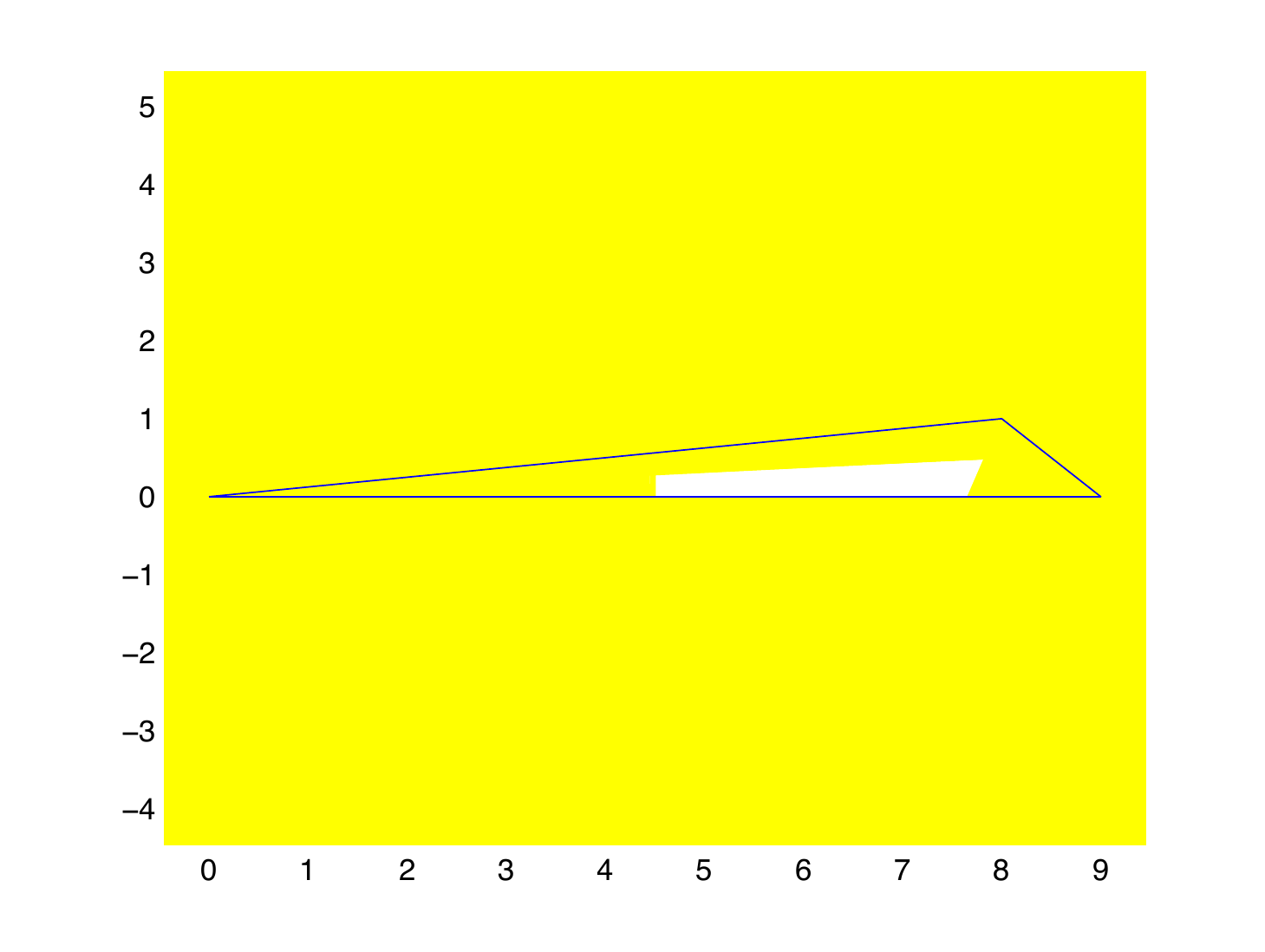}
\includegraphics[scale=.3]{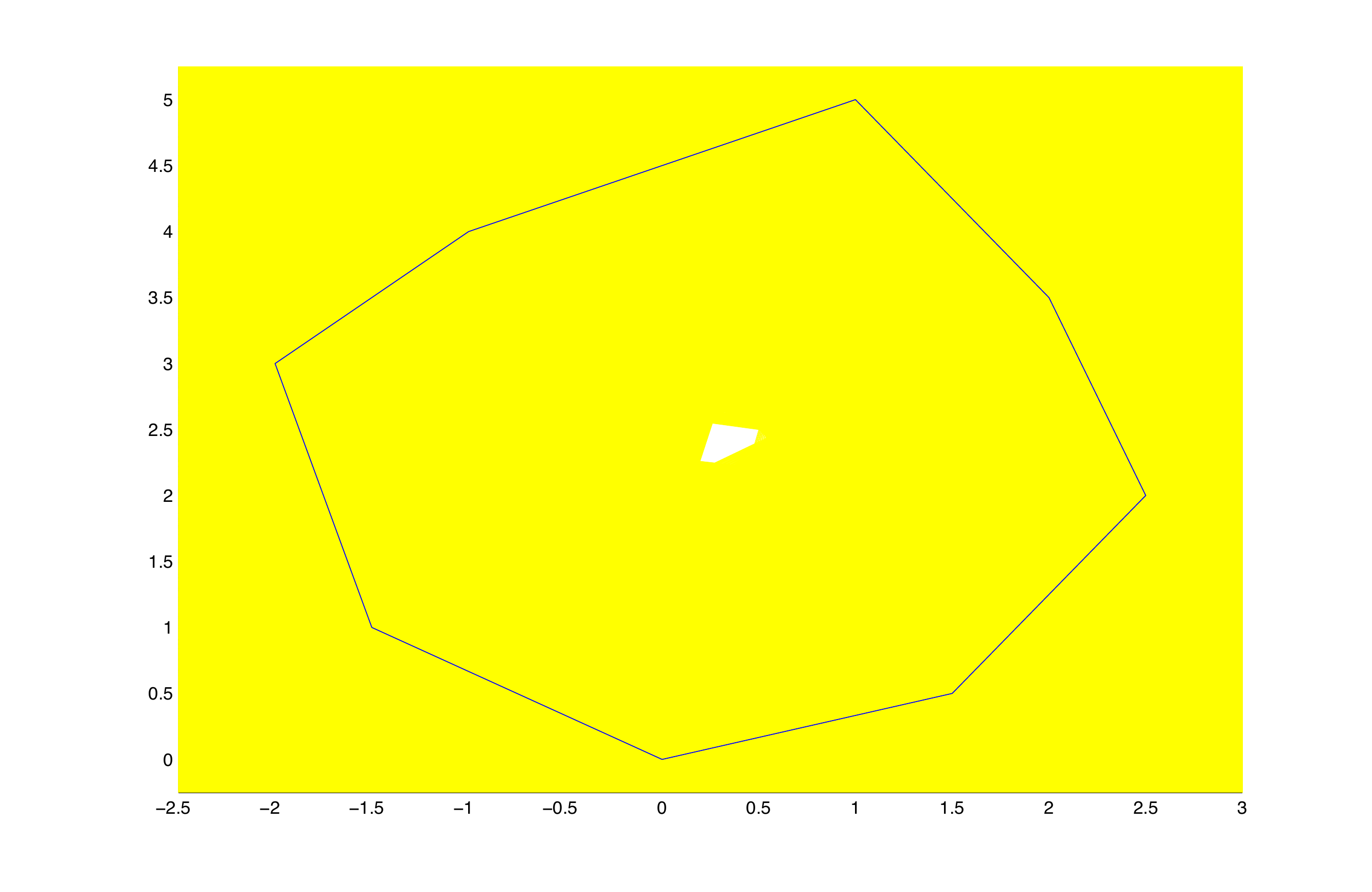}}
\caption{The sets $\cK$ and $\core(\cK)$ in two examples. In the first picture observe that,  by means of \eqref{prova1},
we also know that $x_\infty$ is at a positive (and computable) distance from the boundary of $\cK.$}
\end{figure}
\par
By Corollary \ref{coro:lm}, we then have:
$$
\cR_\cK(\om)=f_\om(\cP_\om(p))=f_\om(\cP_\om(v)).
$$
\par
In case (ii), the $m$-dimensional facet $F$ of $\pa\cK$ containing $q$ must be orthogonal
to the hyperplane $\pi(\la,\om);$ however, the same argument used for case (i) can easily
be worked out in $F.$
\end{proof}
\par
The pictures in Figure 3 show two convex polygons with their relative hearts. The hearts
have been drawn, by using {\sc Matlab}, by an algorithm based on Theorem \ref{th:poligoni}. 

\begin{ack}
The first author was partially supported by the European Research Council under FP7, Advanced Grant n. 226234 ``Analytic Techniques for Geometric
and Functional
Inequalities'', while the second and third authors were partially supported by the PRIN-MIUR grant ``Aspetti geometrici delle equazioni alle derivate parziali e questioni connesse''.
\end{ack}

\end{document}